\def\1{\mathbb{I}}
\newcounter{thm}[section]
\newcounter{appen}[section]
\newtheorem{proposition}[thm]{Proposition}
\newtheorem{assumption}[appen]{Assumption}
\newtheorem{lem}[thm]{Lemma}
\newenvironment{proof}[1][Proof]{\noindent \textbf{#1.}
}{\rule{0.5em}{0.5em}}
\begin{document}

\title{Nonparametric model checks of single-index assumptions}
\author{Samuel Maistre\footnote{Corresponding author. CREST (Ensai) \& IRMAR (UEB), France; samuel.maistre@ensai.fr} \quad and \quad
%\\
%Crest-Ensai \& Irmar (UEB)
%\\
Valentin Patilea\footnote{CREST (Ensai) \& IRMAR (UEB), France; patilea@ensai.fr}}
\date{\today}

\maketitle

\begin{abstract}
\smallskip

Semiparametric single-index assumptions are convenient and widely used dimen\-sion reduction approaches that represent a compromise between the parametric and fully nonparametric models for regressions or  conditional laws. In a mean regression setup, the SIM assumption means that the conditional expectation of the response given the vector of covariates is the same as the conditional expectation of the response given a scalar projection of the covariate vector. In a conditional distribution modeling, under the SIM assumption the conditional law of a response given the covariate vector coincides with the conditional law given a linear combination of the covariates.
Several estimation techniques for single-index models are available and commonly used in applications. However, the problem of testing the goodness-of-fit seems less explored and the existing proposals still have some major drawbacks.
In this paper, a novel kernel-based approach for testing SIM assumptions is introduced. The covariate vector needs not have a density and only the index estimated under the SIM assumption is used in kernel smoothing. Hence the effect of high-dimensional covariates is mitigated while asymptotic normality of the test statistic is obtained.
Irrespective of the fixed dimension of the covariate vector, the new test
detects local alternatives approaching the null hypothesis slower than
$n^{-1/2}h^{-1/4},$ where $h$ is the bandwidth used to build the test statistic and $n$ is the sample size.
A wild bootstrap procedure is proposed for finite sample corrections of the asymptotic critical values. The small sample performances of our test compared to existing procedures are illustrated through simulations.
\smallskip

{\bf Keywords.}  Single-index regression, conditional law, lack-of-fit test, kernel smoothing, $U-$statis\-tics
\end{abstract}
%--------------------------------------------------------------------------

\newpage

%%%%%%%%%%%%%%%%%  Ref a rajouter dans le .bib %%%%%%%%%%%%%%%ù

% {\color{red} These references have to be included in the .bib file}
%
% \qquad
%
%
%
% Cox1972
% Cox, D. R (1972). Regression Models and Life-Tables. \textit{J. Royal Statist. Soc. Series B} \textbf{34}, 187--220.
%
% Horowitz2009
% Horowitz, J.L. (2009). \textit{Semiparametric and Nonparametric Methods in Econometrics},
% Springer Series in Statistics. Sringer: New-York.
%
% Kong2012
% Kong, E., \&  Xia, Y. (2012). A single-index quantile regression model and its estimation. \textit{Econometric Th.} \textbf{28}, 730--768.
%
% Picone2000
% Picone, G.A., \& Butler, J.S. (2000). Semiparametric stimation of multiple equation models. \textit{Econometric Th.} \textbf{26}, 551--575.
%
% Powell1989
% Powell, J.L., Stock, J.H.,  \& Stocker, T.M. (1989). Semiparametric Estimation of Index Coefficients.
% \textit{Econometrica} \textbf{57}, 1403--1430.
%
% Xia2011
% Xia,  C.,  H\"{a}rdle, W. \&  Zhu, L. (2011). The EFM approach for single-index models.
% \textit{Ann. Statist.} \textbf{39}, 1658--1688.

%%%%%%%%%%%%%%%%%%%%%%%%

\section{Introduction}

Semiparametric single index models (SIM) are widely used tools for statistical modeling.
The paradigm of such models is based on the assumption that the information contained in a vector of conditioning random variables is equivalent, in some sense, to the information contained in some index, that is usually a linear combination of the vector components. This assumption underlies most of the statistical parametric models including covariates, but allows for more general semiparametric modeling.
The most common semiparametric SIM are those for the mean regression. See \cite{Powell1989}, \cite{Ichimura1993}, \cite{Hardle1993a}, see also \cite{Horowitz2009} for a recent review. In such models, the  index and the conditional mean given the index are unknown. SIM for quantile regression were considered recently, see \cite{Kong2012}. A more restrictive, but still of significant interest, class of models is obtained by imposing the single-index paradigm to the conditional distribution of response variable given a vector of covariates. In these cases the index and the conditional law of the response given the index are unknown. The famous Cox proportional hazard model, see \cite{Cox1972}, is a particular case of SIM for conditional laws. See \cite{Delecroix2003}, \cite{Hall2005a}, \cite{Chiang2012} for more general situations.

The large amount of interest for SIM could be explained by the fact that the single-index assumption is very often the first intermediate step from a parametric framework towards a  fully nonparametric paradigm. Then an important question is whether this dimension reduction compromise is good enough  to capture the relevant information contained in the covariate vector. A possible way to answer is to build a statistical test of the single-index assumption against general alternatives. Several tests of the goodness-of-fit of single-index mean regression models have proposed in the literature. See \cite{FanLi1996}, \cite{Xia2004}, \cite{Stute2005}, \cite{Chen2009}, \cite{Escanciano2010} and the references therein. The problem of testing SIM models for conditional distribution in full generality seems open.

In this paper we propose a new and quite simple  kernel smoothing-based approach for testing single-index assumptions. We focus on mean regression and conditional law models. The approach is inspired by the remark that, up to some error in covariates, the single-index assumption check could be interpreted as a test of significance in nonparametric regression. Next, the single-index assumption could be conveniently reformulated as an equivalent unconditional moment condition. Finally, a kernel based test statistic could be used to test the unconditional moment condition. The smoothing based goodness-of-fit test approach allows to make the error in covariates negligible and thus to obtain a pivotal asymptotic law under the null hypothesis. The covariate vector needs not have a density, discrete covariables are allowed. Only the index estimated under the SIM assumption is used in kernel smoothing and this fact mitigates the effect of high-dimensional covariates. Meanwhile the asymptotical critical values are given by the quantiles of the normal law. Irrespective of the fixed dimension of the covariate vector, the new test detects local alternatives approaching the null hypothesis slower than $n^{-1/2}h^{-1/4},$ where $h$ is the bandwidth used to build the test statistic and $n$ is the sample size.

The paper is organized as follows.
In Section \ref{sec1}, we recall general considerations on  single-index models.
In Section \ref{secGeneral}, we present a general approach of testing nonparametric significance
and in Section \ref{sec_sim_ch} we apply it to single-index hypotheses for mean regression as well as for conditional law.
In Section \ref{sec_emp_ev} we introduce a  wild bootstrap procedure to correct the asymptotic critical values with small samples and illustrate the performance of our test by an empirical study. Technical results and proofs are relegated to the appendix.

% \newpage

\section{Single-index models}\label{sec1}

Let $Y\in\mathbb{R}^d,$ $d\geq 1,$ denote the random response vector and let $X\in \mathbb{R}^p,$ $p\geq 1,$ be the random column vector of covariates. The data consists of independent copies of $(Y^\prime ,X^\prime)^\prime.$  For mean regression the single-index assumption means that there exists a column parameter vector $\beta_0\in\mathbb{R}^p $ such that
\begin{equation}\label{reg_sim}
\mathbb{E}[Y\mid X] = \mathbb{E}[Y\mid X^\prime \beta_0].
\end{equation}
Only the direction given by $\beta_0$ is identified, so that an additional identification condition accompanies the model assumption, as for instance $\|\beta_0\|=1$ and an arbitrary component is set positive, or an arbitrary component is set to 1. The scalar product $X^\prime \beta_0$  is the so-called index. The direction $\beta_0$ and the nonparametric univariate regression $\mathbb{E}[Y\mid X^\prime \beta_0]$ have to be estimated. See \cite{Hristache2001}, \cite{Delecroix2006}, \cite{Horowitz2009}, \cite{Xia2011} and the references therein for a panorama of the existing estimation procedures.

When applying the single-index paradigm to conditional laws of $Y$ given $X,$ one supposes \begin{equation}\label{law_sim}
Y\perp  X\mid X^\prime \beta_0.
\end{equation}
In this case the direction defined by  $\beta_0$ and the conditional law of the response $Y$ given the index $X^\prime \beta_0$ have to be estimated.
See  \cite{Delecroix2003}, \cite{Hall2005a} and \cite{Chiang2012} for the available estimation approaches.

There are several model check approaches for SIM for mean regressions.
\cite{Xia2004} use an empirical process-based statistic related to that of \cite{Stute1998}.
\cite{FanLi1996}
use a kernel smoothing-based
% extend the use of Zheng's (1996)
quadratic form to a wide range of situations,
including single-index.
Our test statistics are somehow close to that of \cite{FanLi1996}.
% and use kernel smoothing to obtain a test statistic which has
% asymptotically pivotal critical values.
\cite{Chen2009} use an empirical likelihood test for multi-dimensional $Y$ in a
parametric or semiparametric modeling, the single-index mean regression is presented as a particular case but without getting into the details.
% {\color{red} put here one sentence for each of the major approaches cited in the introduction; ; Fan et Li (1996), Xia \emph{et al.} (2004), Stute et Zhu (2005), Chen et Van Keilegom (2009), Escanciano and Song (2010) say that for the conditional law we have no reference !!}

In this paper we propose an alternative model check approach that is able to detect any departure from the single-index assumption, both for mean regressions and conditional law models. It is inspired by a general approach for testing nonparametric significance that is presented in the following section.

\section{A general approach for testing nonparametric significance}
\label{secGeneral}

Let $(\mathcal{H}, \langle\cdot,\cdot\rangle_{\mathcal{H}})$ be a  Hilbert space. The examples we have in mind corresponds to  $\mathcal{H}=\mathbb{R}^d,$ for some $d\geq 1,$ or  $\mathcal{H}=L^2[0,1].$ Consider  $U\in\mathcal{H}$, $Z\in\mathbb{R}^{q}$ et $W\in\mathbb{R}^{r}$ and let   $(U_{i},Z_i,W_i)$, $1\leq i \leq n$ denote an independent sample of $U$, $Z$ and $W$.
Consider the problem of testing the equality
\begin{equation}
\mathbb{E}[U\mid Z,W] = 0 \qquad \text{p.s.}  \label{quang}
\end{equation}
against the nonparametric alternative $\mathbb{P}(\mathbb{E}[U\mid Z,W] = 0 ) <1.$
Several testing procedures against nonparametric alternatives, including the single-index assumptions check, lead to this type of problem.

Let us introduce some notation: for any real-valued, univariate or multivariate function $l$, let  $\mathcal{F}[l]$ denote the Fourier Transform of
$l$. Let $K$ be a multivariate kernel $\mathbb{R}^{q}$ such that
$\mathcal{F}[K]>0$
and let $\phi(s)=\exp(-\|s\|^2/2),$  $\forall s\in\mathbb{R}^r.$
The kernel $K$ could be a multiplicative  kernel with univariate kernels with positive Fourier Transform. Many univariate kernels have this property: gaussian, triangle, Student, logistic, etc.

Our approach is based on the following remark; see also \cite{Lavergne2014}. Let  $w (\cdot)>0$ be some weight function. For any  $h>0$, let
\begin{multline}
I(h) =  \mathbb{E}\left[\langle U_{1}, U_{2}\rangle_{\mathcal{H}}\; w(Z_{1})w(Z_{2})h^{-q}K((Z_{1}-Z_{2})/h)\phi(W_{1}-W_{2})\right]\nonumber \\
 =  \mathbb{E}\left[\langle U_{1}, U_{2}\rangle_{\mathcal{H}}\; w(Z_{1})w(Z_{2})\int_{\mathbb{R}^{q}}e^{2\pi iv^{\prime}(Z_{1}-Z_{2})}\mathcal{F}[K](vh)dv\int_{\mathbb{R}^r}e^{2\pi is^\prime(W_{1}-W_{2})
 }\mathcal{F}[\phi](s)ds\right]\nonumber \\
 =  \int_{\mathbb{R}^{q}}\int_{\mathbb{R}^{r}}\left\|\mathbb{E}\left[\mathbb{E}[U\mid Z,W]w(Z)e^{-i\{v^{\prime}Z +  s^\prime W \}}\right]\right\|_{\mathcal{H}}^{2}\mathcal{F}[K](vh)\mathcal{F}[\phi](s)dtds.
\end{multline}
Since  $\mathcal{F}[\phi], \mathcal{F}[K]>0,$ and $w(\cdot)>0$, the following equivalence holds true: $\forall h>0$,
\[
\mathbb{E}[U\mid Z,W]=0\;\; p.s.\;\;\Leftrightarrow\;\; I(h)=0.
\]
To check condition (\ref{quang}) the idea is to build a sample based approximation of $I(h),$ to suitably normalize it and to let $h$ to decrease to zero. A convenient choice of $w(\cdot)$ could avoid handling denominators close to zero.

In many situations the sample of the variable $U w (Z)$ is not observed and has to be estimated inside the model. Then, an estimate of  $I(h)$ is given by the $U-$statistic
\[
I_{n}(h) = \frac{1}{n(n-1)h^{q}}\sum\limits _{1\leq i\neq j\leq n}\left\langle \widehat {U_{i}w(Z_{i})}, \;\widehat{U_{j}w(Z_{j})} \right\rangle_{\mathcal{H}} K_{ij}(h)\;
\phi_{ij},
\]
where
$$
K_{ij}(h) = K((Z_i - Z_j)/h), \qquad \phi_{ij}  = \exp (-\|W_i - W_j\|^2/2).
$$
The variance of  $I_n(h)$ could be estimated by
$$
v^2_{n}(h) = \frac{2}{n^2(n-1)^2h^{2q}}\sum\limits _{1\leq i\neq j\leq n}\left\langle \widehat {U_{i}w(Z_{i})}, \;\widehat{U_{j}w(Z_{j})} \right\rangle^2_{\mathcal{H}} K^2_{ij}(h)\;
\phi^2_{ij}.
$$
Then the test statistic is
$$
T_n = \frac{I_n(h)}{v_{n}(h)}.
$$

Under mild technical conditions and provided that $h$ converges to zero at a suitable rate, $T_n$ converges in law to a standard normal distribution provided that condition (\ref{quang}) holds true. Hence, a one-sided test with standard normal critical values could be defined; see \cite{Lavergne2014}. One could also show $T_n$ tends to infinity in probability if $\mathbb{P}(\mathbb{E}[U\mid Z,W] = 0 ) <1.$
Making $h$ to decrease to zero at suitable rate allows to render negligible the effect of the errors
$\widehat {U_{i}w(Z_{i})}- {U_{i}w(Z_{i})}.$
On the other hand, the test detects  Pitman alternative hypotheses like
\begin{equation}\label{pitman}
H_{1n}:\ \mathbb{E}(U\mid Z,W)= r_{n}\delta(Z,W),\quad n\geq 1,\;
\end{equation}
as soon as $r_{n}^{2}nh^{q/2}\rightarrow\infty$.

\section{Single-index assumptions checks}\label{sec_sim_ch}

In this section we extend the approach described in section (\ref{secGeneral}) to test single-index assumptions like (\ref{reg_sim}) and (\ref{law_sim}). In this case,  with the notation from section \ref{secGeneral}, $$q=1, \;\; r=p-1, \;\;Z=Z(\beta)\;\; \text{ and  } \;\;W=W(\beta)$$ where, for $\beta\in \mathcal{B} \subset \mathbb{R}^p,$
$$
Z(\beta)=X^\prime \beta \qquad \text{et} \qquad W(\beta) =X^\prime \mathbf{A}\left(\beta\right)
$$
with $\mathbf{A}\left(\beta\right)$ a $p\times (p-1)$ matrix with real entries such that the $p\times p$ matrix
$\left(
\beta \; \mathbf{A}\left(\beta\right)\right)$
is orthogonal. The orthogonality is not necessary, invertibility suffices, but orthogonality is expected to lead to better finite sample properties for the tests.

An additional challenge will come from the fact that the sample of the covariates $Z$ and $W$ depend on estimator of the single-index direction $\beta_0.$ Again, the kernel smoothing and a suitable choice of $h$ allows to render this effect negligible and preserve a pivotal asymptotic law under the null hypothesis.

\subsection{Testing SIM for mean regression}\label{subsec_1}

To simplify the presentation, let us focus on the case of a univariate response, that is $d=1.$ At the end, it will be quite clear how the case $d>1$ could be handled. To restate the single-index condition (\ref{reg_sim}), let
$\mathcal{H} = \mathbb{R},$ $U w(Z) = U (\beta_0) w(Z;\beta_0)$ where
$$
U (\beta) w(Z;\beta) = \{ Y - \mathbb{E}[ Y \mid Z(\beta) ] \} f_{\beta} ( Z(\beta)).
$$
Here $f_\beta(\cdot)$ denotes the density of $X^\prime \beta$ that is supposed to exist, at least for some $\beta.$
Let
\begin{equation}\label{rrr}
\widehat {U_{i}w(Z_{i})} (\beta)=  \frac{1}{n-1}\sum\limits _{k\neq i} (Y_i - Y_k)\frac{1}{g}L_{ik}(\beta,g),
\end{equation}
where $L$ is a univariate kernel, $L_{ik}(\beta,g) = L((Z_i(\beta) - Z_k(\beta))/g)$ and $g$ is a bandwidth converging to zero at some suitable rate described in a following section.
Let $\hat \beta$ be some  estimator of the index direction and  consider
\begin{eqnarray*}
I_{n}^{\{m\}} (\hat{\beta})& = & \dfrac{1}{n\left(n-1\right)h}\sum_{1\leq i\neq j \leq n}\widehat {U_{i}w(Z_{i})} (\hat \beta)\widehat {U_{j}w(Z_{j})} (\hat \beta)K_{ij}(\hat{\beta},h)\phi( W_{i} (\hat{\beta})-W_{j} (\hat{\beta} ) ),
\end{eqnarray*}
where  $K_{ij}(\hat\beta,h) = K((Z_i(\hat\beta) - Z_j(\hat\beta))/h).$ The variance of
$I_{n}^{\{m\}} (\hat{\beta})$ could be estimated by
\begin{equation*}
\hat{\omega}_{n}^{\{m\}} (\hat{\beta})^{2}= \dfrac{2}{n^2\left(n-1\right)^2 h^2}\sum_{1\leq i\neq j \leq n}\!\!\left[\widehat {U_{i}w(Z_{i})}(\hat \beta)\widehat {U_{j}w(Z_{j})}(\hat \beta)\right]^2K_{ij}^2(\hat{\beta},h)\phi^2( W_{i}(\hat{\beta})-W_{j}(\hat{\beta})).
\end{equation*}
The test statistic is then
\begin{equation*}
T_{n}^{\{m\}}(\hat{\beta}) = \dfrac{I_{n}^{\{m\}} (\hat{\beta})}{\hat{\omega}_{n}^{\{m\}}(\hat{\beta})}.
\end{equation*}
Let us point out that only smoothing with the $X_i^\prime \hat \beta$'s is required in order to build this statistic.

In section \ref{sec_as_th} we show that whenever $ \hat{\beta} - \beta^* = O_{\mathbb{P}}(n^{-1/2}), $ for some  $\beta^*$ that could depend on $n,$
\begin{equation}\label{equivalence}
I_{n}^{\{m\}} (\hat{\beta}) - I_{n}^{\{m\}} (\beta^*)= o_{\mathbb{P}}(I_{n}^{\{m\}} (\beta^*)) \quad \text{ and } \quad \hat{\omega}_{n}^{\{m\}} (\hat{\beta}) - \hat{\omega}_{n}^{\{m\}} (\beta^*)=o_{\mathbb{P}}(\hat{\omega}_{n}^{\{m\}} (\beta^*)),
\end{equation}
provided  some mild technical conditions hold true. Under the null hypothesis (\ref{reg_sim}) one expects to have $\beta^* = \beta_0.$ Then $T_{n}^{\{m\}}(\hat{\beta})$ has an asymptotic  standard normal law under the single-index assumption as soon as $T_{n}^{\{m\}}({\beta}_0)$ is standard normal asymptotically distributed.
Sufficient conditions for guaranteeing the asymptotic normality of  $T_{n}^{\{m\}}({\beta}_0)$ when (\ref{reg_sim}) holds true  are provided in \cite{Lavergne2014}.

When the SIM  (\ref{reg_sim}) is wrong, even asymptotically,
in general a semiparametric estimator $\hat \beta$ converges at the rate $O_{\mathbb{P}}(n^{-1/2})$ to some  \emph{pseudo-true} value $\beta^*\in\mathcal{B}$ that depends on the estimation procedure; see \cite{Delecroix1999} for some general theoretical results. Then the asymptotic equivalence (\ref{equivalence}) and the results of \cite{Lavergne2014} imply that a test based on $T_{n}^{\{m\}}(\hat{\beta})$ would reject the null hypothesis with probability tending to 1, in just the way the test based on $T_{n}^{\{m\}}({\beta}^*)$  would do. The case of Pitman alternatives requires a longer investigation since the conclusion depends on the estimation method and the properties of the deviation from the null hypothesis. Such a detailed investigation is beyond our present  scope. Let us, however, briefly describe what would happen in the case where the index $\beta_0$ was estimated through a semiparametric least-squares procedure as introduced by \cite{Ichimura1993}.
Let $r_\beta (s) = \mathbb{E}[Y \mid  X^\prime \beta =s]$ and
$$
\nabla_\beta   \mathbb{E}(Y \mid  X^\prime \beta_0) = \left. \frac{\partial }{\partial \beta} \; r_\beta (X^\prime \beta) \right|_{\beta=\beta_0}.
$$
Let $\delta(X)$ satisfy  $\mathbb{E}[\delta(X) \mid  X^\prime \beta_0] =0 $ and $\mathbb{E}[\delta(X) \nabla_\beta   \mathbb{E}(Y \mid  X^\prime \beta_0) \tau (X)] =0 $ where $\tau(\cdot)$ is a trimming function required in theory to keep the denominators appearing in kernel smoothing away from zero. See, for instance,  \cite{Delecroix2006} for detailed discussion on the role of the trimming.
Consider the  sequence of alternatives
$$
\mathbb{E}(Y \mid X) = \mathbb{E}(Y\mid X^\prime \beta_0) + r_n \delta (X), \quad n\geq 1,
$$
with $r_n\rightarrow 0.$ Then it can be proved that
$ \hat{\beta} - \beta_0  = O_{\mathbb{P}}(n^{-1/2}),$ and hence
$T_{n}^{\{m\}}(\hat{\beta})$ allows to detect such local alternatives  as soon as
$r_{n}^{2}nh^{1/2}\rightarrow\infty$.

\subsection{Testing SIM for the conditional law}\label{subsec_2}
In order to test the single-index condition (\ref{law_sim}) for the conditional law of an univariate $Y$ given $X,$ let  $\mathcal{H} =L^2 [0,1]$ et
$$
U (t;\beta) w(Z;\beta) = \left\{ \mathbf{1}\{ \Phi (Y) \leq t\} - \mathbb{P}[ \Phi (Y)\leq  t\mid Z(\beta) ] \right\} f_{\beta} ( Z( \beta)),  \quad t\in[0,1], \; \beta\in\mathcal{B},
$$
where $\Phi$ is some distribution function on the real line, for instance a normal distribution function or the marginal distribution function of $Y$. In the latter case, in general the distribution is unknown but could be estimated by the empirical distribution function. The case of multivariate $Y$ could be also considered after obvious modifications and for the sake of simplicity will not be investigated herein.

Let
\begin{equation}\label{rrr2}
\widehat {U_{i}w(Z_{i})}( \beta) (t) =  \frac{1}{n-1}\sum\limits _{k\neq i} (
\mathbf{1}\{\Phi (Y_i) \leq t\} - \mathbf{1}\{\Phi (Y_k)\leq t\})\frac{1}{g}L_{ik}( \beta,g), \quad t\in[0,1].
\end{equation}
Let $\widetilde \beta$ be some estimator of $\beta_0$ and  consider
\begin{eqnarray*}
I_{n}^{\{l\}} (\widetilde{\beta})& = & \dfrac{1}{n\left(n-1\right)h}\sum_{1\leq i\neq j \leq n}\!\!\left\langle \widehat {U_{i}w(Z_{i})}(\widetilde \beta),\; \widehat {U_{j}w(Z_{j})} (\widetilde \beta)\right\rangle_{L^2}K_{ij}(\widetilde{\beta},h)\phi( W_{i} (\widetilde{\beta})-W_{j} (\widetilde{\beta} )),
\end{eqnarray*}
where  for any $u(\cdot)$ and $v(\cdot)$ squared integrable functions defined on the unit interval, $$\langle u,v \rangle_{L^2} = \int_0^1 u(t)v(t)dt.$$
The variance of
$I_{n}^{\{l\}} (\tilde{\beta})$ could be estimated by
\begin{equation}\label{om_hat}
\hat{\omega}_{n}^{\{l\}} (\tilde{\beta})^{2}= \dfrac{2}{n^2\left(n\!-\!1\right)^2 \! h^2}\sum_{1\leq i\neq j \leq n}\!\!\!\left\langle\widehat {U_{i}w(Z_{i})}(\widetilde \beta), \;\widehat {U_{j}w(Z_{j})}(\widetilde \beta) \right\rangle_{L^2}^2 \!\!K_{ij}^2(\tilde{\beta},h)\phi^2( W_{i}(\tilde{\beta})-W_{j}(\tilde{\beta})).
\end{equation}
The test statistic is then
\begin{equation*}
T_{n}^{\{l\}}(\tilde{\beta}) = \dfrac{I_{n}^{\{l\}} (\tilde{\beta})}{\hat{\omega}_{n}^{\{l\}} (\tilde{\beta})}.
\end{equation*}

In section \ref{sec_as_th} we show that, under suitable technical conditions, whenever $ \widetilde{\beta} - \beta^\sharp = O_{\mathbb{P}}(n^{-1/2}), $
\begin{equation}\label{equivalence2}
I_{n}^{\{l\}} (\widetilde{\beta}) - I_{n}^{\{l\}} (\beta^\sharp)= o_{\mathbb{P}}(I_{n}^{\{l\}} (\beta^\sharp)) \quad \text{ and } \quad \hat{\omega}_{n}^{\{l\}} (\widetilde{\beta}) - \hat{\omega}_{n}^{\{l\}} (\beta^\sharp)=o_{\mathbb{P}}( \hat{\omega}_{n}^{\{l\}} (\beta^\sharp)).
\end{equation}
Under the null hypothesis (\ref{law_sim}) one expects to have $\beta^\sharp = \beta_0.$
Then the asymptotic normality of $T_{n}^{\{l\}}({\beta}_0),$ proved in  Proposition \ref{sam_1mai} below,
implies that the asymptotic one-sided test based on $T_{n}^{\{l\}}(\tilde{\beta})$ has standard normal critical values.

If the single-index assumption fails and the alternative is fixed, like in the case of mean regression, one expects $\widetilde \beta - \beta^*=O_{\mathbb{P}}(n^{-1/2})$ for some  \emph{pseudo-true} value $\beta^*\in\mathcal{B}$ that depends on the estimation procedure. Then $T_{n}^{\{l\}}(\tilde{\beta})$ would detect the alternative with probability tending to 1. Concerning the case of local alternatives, let $\delta (X,t)$ and $r_n\rightarrow 0$ such that
$$
\mathbb{P} [\Phi (Y) \leq t\mid X ] =  \mathbb{P} [\Phi (Y)\leq t \mid X^\prime \beta_0 ] + r_n \delta(X,t),\qquad t\in[0,1],
$$
is a conditional distribution function. Suitable orthogonality conditions for the function $\delta (X,t)$ would yield $\widetilde \beta - \beta_0=O_{\mathbb{P}}(n^{-1/2})$ and hence
$T_{n}^{\{l\}}(\widetilde{\beta})$ allows to detect such local alternatives  as soon as
$r_{n}^{2}nh^{1/2}\rightarrow\infty$.

\subsection{Asymptotic results}\label{sec_as_th}

In this section we formally state the results that guarantee the asymptotic equivalences
(\ref{equivalence}) and (\ref{equivalence2}).
Let $\widehat {U_{i}w(Z_{i})}(\beta)$ be defined as in (\ref{rrr}) or (\ref{rrr2}). Let $I_{n} ({\beta})$ (resp. $\hat{\omega}_{n} ({\beta})^{2}$) denote any of $I^{\{m\}}_{n} ({\beta})$ or $I^{\{l\}}_{n} ({\beta})$ (resp. $\hat{\omega}_{n}^{\{m\}} ({\beta})^{2}$ or $\hat{\omega}_{n}^{\{l\}} ({\beta})^{2}$).

\begin{proposition}\label{as_equiv}
Suppose  the conditions in Assumption \ref{ass_app} are met. If $\beta_n$ is an estimator such that
$
{\beta_n} - \bar\beta = O_{\mathbb{P}}(n^{-1/2}), $
then
$$
I_{n}({\beta_n}) - I_{n} (\bar \beta)= o_{\mathbb{P}}(I_{n}(\bar\beta)) \quad \text{ and } \quad \hat{\omega}_{n} ({\beta_n}) - \hat{\omega}_{n} (\bar\beta)=o_{\mathbb{P}}(\hat{\omega}_{n} (\bar\beta)).
$$
\end{proposition}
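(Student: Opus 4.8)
The plan is to show that replacing the deterministic target $\bar\beta$ by the nearby estimator $\beta_n$ perturbs $I_n$ and $\hat\omega_n$ only by an amount that is negligible relative to their common order of magnitude. Under Assumption \ref{ass_app} the normalisation is such that $\hat\omega_n(\bar\beta)$ and, under the null, $I_n(\bar\beta)$ are of exact order $a_n := n^{-1}h^{-1/2}$ in probability; this is precisely what makes $T_n(\bar\beta)$ asymptotically standard normal, as recalled from \cite{Lavergne2014}. Hence $o_{\mathbb{P}}(a_n)=o_{\mathbb{P}}(I_n(\bar\beta))$ in the binding null case, while under a fixed alternative $I_n(\bar\beta)$ is of the larger order $O_{\mathbb{P}}(1)$ and the equivalence is only easier. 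It therefore suffices to prove the absolute bounds $I_n(\beta_n)-I_n(\bar\beta)=o_{\mathbb{P}}(a_n)$ and $\hat\omega_n^2(\beta_n)-\hat\omega_n^2(\bar\beta)=o_{\mathbb{P}}(a_n^2)$; the claim for $\hat\omega_n$ then follows from the identity $\hat\omega_n(\beta_n)-\hat\omega_n(\bar\beta)=\{\hat\omega_n^2(\beta_n)-\hat\omega_n^2(\bar\beta)\}/\{\hat\omega_n(\beta_n)+\hat\omega_n(\bar\beta)\}$. Since $\|\beta_n-\bar\beta\|=O_{\mathbb{P}}(n^{-1/2})$, with probability tending to one $\beta_n$ lies in a shrinking ball $B_n=\{\beta:\|\beta-\bar\beta\|\le Cn^{-1/2}\}$, so every statement reduces to a uniform-in-$B_n$ bound.

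The dependence on $\beta$ enters only through $Z_i(\beta)=X_i^\prime\beta$ and $W_i(\beta)=X_i^\prime\mathbf{A}(\beta)$, so I would Taylor expand $\beta\mapsto I_n(\beta)$ around $\bar\beta$ factor by factor. Differentiating the three $\beta$-dependent pieces produces terms of qualitatively different sizes: differentiating the outer kernel $K_{ij}(\beta,h)$ brings a factor $h^{-1}$ together with $(X_i-X_j)$; differentiating the inner smoother $\widehat{U_i w(Z_i)}(\beta)$ through $L_{ik}(\beta,g)$ brings a factor $g^{-1}$; and differentiating $\phi(W_i(\beta)-W_j(\beta))$ stays bounded because $\phi$ and $\mathbf{A}(\cdot)$ are smooth with bounded derivatives. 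Collecting terms, $I_n(\beta_n)-I_n(\bar\beta)$ equals a sum of first-order contributions, each of the form $(\beta_n-\bar\beta)^\prime$ times a $U$-statistic-type average, plus a second-order remainder controlled by $\|\beta_n-\bar\beta\|^2=O_{\mathbb{P}}(n^{-1})$ and the bounded second derivatives of the kernels.

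The core step is to bound the first-order averages, each of which is a $U$-statistic of the same (fourth) order as $I_n$, since $\widehat{U_i w(Z_i)}$ is itself a sum over $k\neq i$. Their orders I would read off from a Hoeffding decomposition, exploiting under the null the conditional mean-zero property $\mathbb{E}[U\mid Z,W]=0$, which annihilates the leading projections of every surviving $\widehat{U_i w(Z_i)}$ factor. The doubly degenerate term carrying $h^{-1}$, from $\partial_\beta K_{ij}$, is then of order $a_n/h$, so after multiplication by $\|\beta_n-\bar\beta\|=O_{\mathbb{P}}(n^{-1/2})$ it is $o_{\mathbb{P}}(a_n)$ exactly under a condition of the form $nh^2\to\infty$ that I would read off Assumption \ref{ass_app}; the term carrying $g^{-1}$, from differentiating the inner smoother, loses one mean-zero factor and is accordingly only a once-degenerate average of order $O_{\mathbb{P}}(n^{-1/2})$ up to a kernel bias, hence negligible under the remaining rate conditions; and the second-order remainder, carrying $\|\beta_n-\bar\beta\|^2 h^{-2}$, is again $o_{\mathbb{P}}(a_n)$ under $nh^2\to\infty$. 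The identical expansion applied to $\hat\omega_n^2$, whose summand has the same structure with $K^2$, $\phi^2$ and squared inner factors, gives $\hat\omega_n^2(\beta_n)-\hat\omega_n^2(\bar\beta)=o_{\mathbb{P}}(a_n^2)$ under the same condition, and the square-root identity above finishes the proof.

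The main obstacle I anticipate is concentrated in the first-order term coming from $\partial_\beta K_{ij}$. First, because $\beta_n$ is random and data-dependent, it is not enough to bound this term at the fixed point $\bar\beta$: one must control it uniformly over the shrinking ball $B_n$, which calls for a $U$-process maximal inequality or an equicontinuity argument rather than a single moment computation. Second, once $K$ is differentiated the factor $(X_i-X_j)$ no longer shrinks in the directions orthogonal to $\beta$, so a naive variance bound picks up the full $h^{-1}$; the saving grace is the conditional mean-zero structure under the null, which must be invoked carefully to confirm that the surviving projection is genuinely centred, so that the term is of order $a_n/h$ and not larger. Tracking the interplay between the two bandwidths $g$ and $h$ — through the bias of the inner estimator and the derivative of the outer kernel — is where the real bookkeeping lies.
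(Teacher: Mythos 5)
Your proposal has the right general instincts (localize $\beta_n$ to a shrinking ball, exploit degeneracy under the null, invoke a $U$-process maximal inequality for uniformity), but it has two genuine gaps. First, a scope gap: your reduction ``it suffices to prove the absolute bound $I_n(\beta_n)-I_n(\bar\beta)=o_{\mathbb{P}}(a_n)$ with $a_n=n^{-1}h^{-1/2}$'' covers only the null and fixed alternatives, whereas Assumption \ref{ass_app}(g) contains $r_n^2 n h^{1/2}\rightarrow\infty$ precisely because Proposition \ref{as_equiv} must hold along Pitman drifts $Y_{ni}(t)=r_i(t;\bar\beta)+r_n\delta(X_i,t)+\epsilon_i(t)$, where $I_n(\bar\beta)$ is driven by a term of order $r_n^2\gg a_n$. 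The paper accordingly proves the two-scale bound $I_n(\beta_n)-I_n(\bar\beta)=o_{\mathbb{P}}(n^{-1}h^{-1/2}+r_n^2)$, and the absolute bound you aim for is in fact unattainable there: the $\epsilon$--$\delta$ cross terms in the decomposition (the analogues of $r_nD_{n212}$ and $r_nD_{n1113}$ in the paper's proof) are of order $r_n\, o_{\mathbb{P}}(n^{-1/2})$, which relative to $a_n$ is of size $(r_n^2 nh)^{1/2}$ and hence can diverge; they can only be absorbed into $o_{\mathbb{P}}(r_n^2)$ via $n^{-1/2}=o(r_n)$. Since you dropped the $\delta$-terms from the analysis, your plan never produces the rate the proposition actually needs.

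Second, a regularity gap: your Taylor expansion to second order in $\beta$ is not licensed by Assumption \ref{ass_app}(f), which only asks that $K$ be of bounded variation and differentiable off a finite set with $\int|K^\prime|<\infty$ --- no Lipschitz $K^\prime$, let alone a bounded $K^{\prime\prime}$ (only $L^\prime$ is Lipschitz). The paper deliberately avoids any expansion of $K$: it splits $I_n(\beta)-I_n(\bar\beta)$ exactly into $D_{n1}+D_{n2}+D_{n3}$ and controls the sup-over-ball kernel increments by a monotone (bounded-variation) sandwich, Lemma \ref{mon_tric}, which delivers the key moment bounds such as $\mathbb{E}\bigl[\sup_{\beta\in\mathcal{B}_n}|K_{12}(\beta)\phi_{12}(\beta)-K_{12}(\bar\beta)\phi_{12}(\bar\beta)|\bigr]\leq Cb_nh^{1/2}$; these feed the maximal inequality of \cite{Sherman1994} for the degenerate $U$-processes, while the inner smoother is handled by uniform empirical-process rates (Lemmas \ref{Deltas}--\ref{deltas4}, via Theorem 3.1 of \cite{Vaart2011}), not by differentiating $L_{ik}(\beta,g)$. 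Moreover, even granting smooth kernels, your second-order remainder carries $\|\beta_n-\bar\beta\|^2h^{-2}$ uniformly over the ball; a crude bound gives order $n^{-1}h^{-2}\ln^2 n\gg a_n$, so degeneracy would have to be exploited again at the Hessian level --- at a random intermediate point where Hoeffding projections are not directly available --- i.e., you end up needing exactly the $U$-process machinery the expansion was supposed to streamline, under stronger kernel assumptions than the paper imposes.
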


As mentioned above, the asymptotic behavior of $I_{n} (\bar \beta)$ in the case of mean regression was investigated by \cite{Lavergne2014}. The case where $ U_{i}w(Z_{i})(\beta)$ is a stochastic process seems less explored and is hence considered in the following proposition. Let $\hat \omega_n \left(\beta_{0}\right)$
be a variance estimator defined as in equation (\ref{om_hat}) with $\widetilde \beta$ replaced by $\beta_0.$

\begin{proposition}\label{sam_1mai}
Suppose  the conditions in Assumption \ref{ass_app} are met and the null hypothesis
(\ref{law_sim}) holds true. Then
$nh^{1/2}I_{n}^{\{l\}}\left(\beta_{0}\right)/\hat \omega_n^{\{l\}}\left(\beta_{0}\right)\to\mathcal{N}\left(0,1\right)$ in law
under $H_{0}$, and
\begin{multline*}
\hat \omega^{\{l\}}_n\left(\beta_{0}\right)\to \omega^{2}\left(\beta_{0}\right)  =  2\int K^{2}\left(u\right)du\times\intop\intop\Gamma^{2}\left(s,t\right)ds\, dt\\
  \times\mathbb{E}\left[\intop f_{\beta_{0}}^{4}\left(z\right)\phi^{2}\left(W_{1}\left(\beta_{0}\right)-W_{2}\left(\beta_{0}\right)\right)\pi_{\beta_0}\left(z\mid W_{1}\left(\beta_{0}\right)\right)\pi_{\beta_0}\left(z\mid W_{2}\left(\beta_{0}\right)\right)dz\right],
\end{multline*}
where $\pi_{\beta_0}(\cdot,w)$ is the conditional density of $Z(\beta_0)$ knowing that $W(\beta_0)=w,$
 and
\begin{eqnarray*}
\Gamma\left(s,t\right) & = & \mathbb{E}\left[\epsilon\left(s\right)\epsilon\left(t\right)\right] \qquad t,s\in[0,1],
\end{eqnarray*}
and  $\epsilon\left(t\right) = \mathbf{1}\{\Phi (Y)\leq t\}- \mathbb{P} [\Phi (Y)\leq t \mid X^\prime \beta_0 ]. $

\end{proposition}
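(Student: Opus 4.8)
The plan is to studentize in two essentially independent pieces: first to show that the infeasible version of $nh^{1/2}I_n^{\{l\}}(\beta_0)$, obtained by replacing each estimated quantity $\widehat{U_iw(Z_i)}(\beta_0)$ by its target, is a completely degenerate second order $U$-statistic obeying a central limit theorem; then to show that the inner-smoothing estimation error and the fluctuation of the variance estimator are negligible; and finally to combine via Slutsky's lemma. Note that, to make the normalizations consistent, I read the two claims as $nh^{1/2}I_n^{\{l\}}(\beta_0)\to\mathcal N(0,\omega^2(\beta_0))$ and $n^2h\,\hat\omega_n^{\{l\}}(\beta_0)^2\to\omega^2(\beta_0)$, so that the studentized ratio $T_n^{\{l\}}(\beta_0)=I_n^{\{l\}}(\beta_0)/\hat\omega_n^{\{l\}}(\beta_0)$ is asymptotically standard normal.

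First I would exploit the difference structure of (\ref{rrr2}). Writing $\mathbf{1}_i(t)=\mathbf{1}\{\Phi(Y_i)\le t\}$, the inner sum splits as
\[
\widehat{U_iw(Z_i)}(\beta_0)(t)=\mathbf{1}_i(t)\,\hat f_{-i}(Z_i(\beta_0))-\frac{1}{(n-1)g}\sum_{k\ne i}\mathbf{1}_k(t)\,L_{ik}(\beta_0,g),
\]
where $\hat f_{-i}$ is the leave-one-out kernel density estimate of $f_{\beta_0}$. Linearizing the two kernel estimates around their means gives $\widehat{U_iw(Z_i)}(\beta_0)(t)=\psi_i(t)+R_i(t)$ with target $\psi_i(t)=\epsilon_i(t)\,f_{\beta_0}(Z_i(\beta_0))$ and a remainder $R_i$ splitting into a deterministic smoothing bias of order $g^{\rho}$, controlled by the smoothness of $z\mapsto\mathbb{P}[\Phi(Y)\le t\mid Z(\beta_0)=z]\,f_{\beta_0}(z)$ imposed in Assumption \ref{ass_app}, and a centered stochastic term of order $(ng)^{-1/2}$ in $L^2[0,1]$. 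The crucial point, and the reason the construction needs no explicit estimate of the conditional law, is that the difference form automatically annihilates the conditional mean, so $R_i$ carries no first order estimation error of $r(z,t)=\mathbb{P}[\Phi(Y)\le t\mid Z(\beta_0)=z]$.

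Second I would analyze the leading term
\[
\widetilde I_n=\frac{1}{n(n-1)h}\sum_{1\le i\ne j\le n}\langle\epsilon_i,\epsilon_j\rangle_{L^2}\,f_{\beta_0}(Z_i(\beta_0))\,f_{\beta_0}(Z_j(\beta_0))\,K_{ij}(\beta_0,h)\,\phi_{ij}.
\]
Under (\ref{law_sim}) one has $\mathbb{E}[\epsilon_i(t)\mid X_i]=0$ for every $t$, so the H\'ajek projection onto a single observation vanishes and $\widetilde I_n$ is completely degenerate. A second moment computation then yields $\mathbb{E}[(nh^{1/2}\widetilde I_n)^2]\to\omega^2(\beta_0)$: the change of variables $u=(Z_i(\beta_0)-Z_j(\beta_0))/h$ produces the factor $2\int K^2(u)\,du$ together with the diagonal conditional densities $\pi_{\beta_0}(z\mid W_i)\pi_{\beta_0}(z\mid W_j)$, Fubini over $[0,1]^2$ turns $\mathbb{E}[\langle\epsilon_1,\epsilon_2\rangle_{L^2}^2\mid Z_1,Z_2]$ into $\int\!\!\int\Gamma^2(s,t)\,ds\,dt$, and integrating out $(W_i,W_j)$ supplies the outer expectation carrying the $\phi^2$ and $f_{\beta_0}^4$ factors, giving exactly the stated $\omega^2(\beta_0)$. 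I would then invoke a central limit theorem for degenerate $U$-statistics with an $n$-dependent kernel (for instance de Jong's theorem or the martingale CLT of Hall); the one condition to check is the Lindeberg-type ratio of kernel moments, which tends to zero under the bandwidth restrictions of Assumption \ref{ass_app}. This gives $nh^{1/2}\widetilde I_n\to\mathcal N(0,\omega^2(\beta_0))$. Plugging $\widehat{U_iw}=\psi_i+R_i$ into $I_n^{\{l\}}(\beta_0)$ then produces $\widetilde I_n$ plus cross terms $\langle\psi_i,R_j\rangle_{L^2}$ and a quadratic-in-$R$ term, each of which I would show to be $o_{\mathbb{P}}((nh^{1/2})^{-1})$ by bounding the variances of the resulting higher order $U$-statistics, the bias contribution being $O(nh^{1/2}g^{\rho})=o(1)$. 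Consistency of the variance estimator is handled in parallel: $n^2h\,\hat\omega_n^{\{l\}}(\beta_0)^2$ is itself a $U$-statistic whose expectation reproduces $\omega^2(\beta_0)$ by the same change of variables and whose fluctuation is negligible, whence $n^2h\,\hat\omega_n^{\{l\}}(\beta_0)^2\to\omega^2(\beta_0)$ in probability.

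The step I expect to be the main obstacle is the negligibility of the remainder terms in the process-valued setting. Because $U$ takes values in $L^2[0,1]$, every error bound must be uniform in $t$ and taken in the $L^2$ norm, so one must control the estimation error of the entire conditional distribution function as a process rather than pointwise; moreover the two bandwidths $g$ and $h$ must be balanced simultaneously so that $nh^{1/2}$ times the remainder vanishes while the degenerate $U$-statistic CLT remains valid. Reconciling these two requirements is precisely where Assumption \ref{ass_app} does its work and is the technically delicate heart of the argument.
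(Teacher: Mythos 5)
Your proposal is correct and follows essentially the same route as the paper: the same decomposition of $\widehat{U_{i}w(Z_{i})}(\beta_0)$ into the leading term $\epsilon_i(\cdot)f_{\beta_0}(Z_i(\beta_0))$ plus smoothing-bias and smoothed-noise remainders (the paper's identity $\widehat{U_{i}w(Z_{i})}=[r_i-\tilde r_i+\epsilon_i-\tilde\epsilon_i]\hat f_{\beta_0,i}$ with the six terms $I_1,\dots,I_6$), a martingale CLT for the resulting degenerate second-order $U$-statistic (the paper uses Corollary 3.1 of Hall and Heyde with the filtration generated by all the $X$'s and the first $m$ responses), negligibility of the bias and cross terms under $nh^{1/2}g^4\to 0$, and the identical change-of-variables computation producing $2\int K^2(u)\,du\times\int\!\!\int\Gamma^2(s,t)\,ds\,dt$ and the $f_{\beta_0}^4\,\pi_{\beta_0}\pi_{\beta_0}\phi^2$ factor for the variance limit. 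The only cosmetic deviation is that the paper keeps the estimated densities $\hat f_{\beta_0,i}$ inside both the leading quadratic form and the normalizer $\omega_n^2(\beta_0)$, handling them by conditioning on the covariates rather than linearizing them away as you do, and your reading of the normalization, $nh^{1/2}\widetilde I_n\to\mathcal{N}(0,\omega^2(\beta_0))$ with $n^2h\,\hat\omega_n^{\{l\}}(\beta_0)^2\to\omega^2(\beta_0)$, is indeed the consistent interpretation of the statement.
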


%\newpage

\section{Empirical evidence}\label{sec_emp_ev}

For conditional mean, we simulate our data using the following model
\begin{eqnarray}
Y_{i} & = & X_{i}^{\prime}\beta+4\exp \{ -\left(X_{i}^\prime \beta\right)^{2} \} +\delta\sqrt{X_{i}^{\prime} X_{i}}+ \sigma \varepsilon_{i}
\label{eq:Mod1}\end{eqnarray}
where $X_{i}=(X_{i1},\cdots,X_{ip})^\prime $ follows a standard normal $p$-variate law, $\beta_0=\left(1,1,0,\dots,0\right)^\prime$ and $\sigma=0.3.$
For $\varepsilon_{i}$, we consider two cases: a standard univariate normal law independent of the $X_i$'s
and a centered log-normal heteroscedastic setup
$$
\varepsilon_{i}=\left(\log\mathcal{N}(0,1)-\sqrt{\mbox{e}}\right)\times\sqrt{(1+X_{i2}^2)/2}.$$
The model (\ref{eq:Mod1}) was proposed by \cite{Xia2004} and investigated only in the case of a homoscedastic noise.

To estimate the parameter $\beta,$ we consider the approach of \cite{Delecroix2006}, that is
\begin{equation}\label{est_paramj}
\tilde{\beta}=\arg\min_{\beta: \beta_{1}>0}\sum_{i=1}^n\left(Y_i - \dfrac{\sum_{k\neq i}Y_{k}\tilde{L}_{ik}\left(\beta\right)}{\sum_{k\neq i}\tilde{L}_{ik}\left(\beta \right)}\right)^2,
\end{equation}
where
$$
\tilde{L}_{ik}(\beta)=L\left((\widetilde{X}_i-\widetilde{X}_k)^\prime \beta\right), \quad \widetilde{X}_i= \frac{X_i}{ \sqrt{n^{-1}\sum_{k=1}^n (X_k-\overline{X})^2}}\;\;\text{  and }\;\; \overline{X}=n^{-1}\sum_{k=1}^n X_k.$$
Then the estimator is defined as $\hat\beta = \tilde{\beta}  /\| \tilde{\beta} \|$ and the bandwidth $g$ is equal to
$\| \tilde{\beta} \|^{-1}.$

To improve the asymptotic critical values with small samples, we propose the following bootstrap procedure:
\begin{description}
% \item [{(i)}] Define $\hat\beta = \tilde{\beta}  /\| \tilde{\beta} \|$ and $g= \| \tilde{\beta} \|^{-1}$ with $\tilde\beta$ as in (\ref{est_paramj}).
%
%
\item [{(i)}] Define
\[
\hat{m}_{i}=\dfrac{\sum_{k\neq i}Y_{k}\tilde{L}_{ik}(\tilde{\beta})}{\sum_{k\neq i}\tilde{L}_{ik}(\tilde{\beta})}.
\]
% for some $g^{*}$ is such that $\forall i $,
% $\sum_{k\neq i}L_{nik}(\hat{\beta},\, g^{*})\geq c >0$ for some constant $c.$
\item [{(ii)}] For $b\in\left\{ 1,\dots,\, B \right\} $

\begin{description}
\item [{(a)}] let $Y_{i}^{*,\, b}=\hat{m}_{i}+\eta_{i}\left(Y_{i}-\hat{m}_{i}\right),$ where the $\eta_i$s are independent variables with the
 two-point distribution
\[
\mathbb{P} [ \eta_{i} = (1-\sqrt{5})/2 ]
  =
(5+\sqrt{5})/10
  \; , \;
\mathbb{P} [ \eta_{i} = (1+\sqrt{5})/2 ]
  =
(5-\sqrt{5})/10
.
\]
\item [{(b)}] define
\[
% \tilde{\beta}^{*,b}=\arg\min_{\beta,:\beta_{1}>0}\dfrac{\sum_{k\neq i}Y_{i}^{*}\tilde{L}_{ik}\left(\beta,g\right)}{\sum_{k\neq i}\tilde{L}_{ik}\left(\beta,g\right)}.
\tilde{\beta}^{*,b}=\arg\min_{\beta: \beta_{1}>0}\sum_{i=1}^n\left(Y^*_i - \dfrac{\sum_{k\neq i}Y^*_{k}\tilde{L}_{ik}\left(\beta\right)}{\sum_{k\neq i}\tilde{L}_{ik}\left(\beta \right)}\right)^2
\]
and $\hat\beta^{*, b}= \tilde{\beta}^{*, b}  /\| \tilde{\beta} ^{*, b}\|$ and $g^{*, b}= \| \tilde{\beta}^{*, b} \|^{-1}.$
\end{description}
\item [{(iii)}]
Define $T_{n}^{\{m\} *, b}$ as $T_{n}^{\{m\}}$ where the $Y_{i}$s
are replaced by the $Y_{i}^{*,\, b}$s, $\hat{\beta}$ by $\hat{\beta}^{*,b},$ and the bandwidth  $ g$ by $g^{*, b}.$ The bandwidth $h$ does not change. Repeat Step (iii) $B$  times. Compute the empirical quantiles of $T_{n}^{\{m\} *, b}$ using the $B$ bootstrap values.
\end{description}

\medskip

In our experiments  the bootstrap correction is used with $B=499$ bootstrap samples.
The level is fixed as $\alpha=10\%$.
We considered  $L\left(\cdot\right)=K\left(\cdot\right)$ and equal to the standard gaussian density. With this choice no numerical problem occurred due to denominators too close to zero and therefore we did not consider any trimming in equation (\ref{est_paramj}) and its bootstrap version.

First, we investigated the influence of the bandwidth $h$ on the level. Several bandwidths were considered, that
is $h=c\times n^{-2/9}$ with $c\in\{ 2^{k/2}: k=-2,-1,0,1,2\} $.
The results on empirical
rejection rates for the model defined in equation (\ref{eq:Mod1}) with   $\delta=0$ (that is on the null hypothesis) and $n=100$ are presented in Figure \ref{fig:SIMlevel}.
The results are based on $500$ replications, with homoscedastic noise and $p=2,$ $p=4$, and with heteroscedastic log-normal noise and $p=4$.
The normal critical values are quite inaccurate, while the bootstrap correction seems to overreject slightly, particularly
for a large bandwidth $h$. For the third case with heteroscedastic  noise, the test rejects too often. However, for larger sample
sizes, this drawback is mitigated, as could be seen from the fourth plot in Figure \ref{fig:SIMlevel} where we considered the heteroscedastic noise with $p=4$ and $n=200.$

Next, we studied the behavior of our statistic under the null hypothesis
(500 replications) and several alternatives (250 replications) defined
by some positive value of $\delta$.
We only considered the statistics
with bandwidth factor $c=1$ and compared it to the statistic introduced by \cite{Xia2004}.
The results are presented in Figure
(\ref{fig:SIMpower}).
\cite{Xia2004}'s test performs better for $p=2,$ while our test shows better performance for $p=4$. It appears  that the greater $p$ is,
the more advantageous it will be to use our test statistic.
% For model (\ref{eq:Mod1}), the power curves exhibit
% a ``plateau'' which could be explained by an inflated variance
% on the alternatives. If we increase the sample size, the level of the plateau
% rises.
% For model (\ref{eq:Mod2}), this phenomenon is not observed.

For conditional law, we simulate our data using the following mixture model
\begin{eqnarray}
Y & = & \left(1-\delta\right)\mathcal{N}\left(X^{\prime}\beta,\,0.09\right)+\delta\mathcal{N}\left(\left\Vert X\right\Vert ,\,0.09\right)
\label{eq:Mod1law}\end{eqnarray}
where $X_{i}=(X_{i1},X_{i2})^\prime $ has a standard normal bivariate law and $\beta_0=\left(1,1\right)^\prime/\sqrt{2}$.
We apply the test statistic $I^{\{ l \}}_n$ based on the
quantities $\widehat {U_{i}w(Z_{i})}( \beta) (t)$ introduced in (\ref{rrr2}). Here  the events $\{\Phi (Y_i) \leq  t \}$ are defined  with $\Phi(\cdot)$ equal to the empirical distribution function of the $Y_i$'s.
In this case an event $\{\Phi (Y_i) \leq  t \}$ is determined by the rank of $Y_i$ in the sample of the response variable.
To estimate the index parameter $\beta$ we use the approach of \cite{Delecroix2003}
that we adapt to our particular choice of $\Phi(\cdot)$.
More precisely, let
\[
\left(\tilde{\beta},\tilde g_Y \right)=\arg\min_{(\beta,g_Y):\beta_1>0 }\sum_{i=1}^{n}\log\dfrac{\sum_{j\neq i}g_Y^{-1}L\left(\left(R_{i}-R_{j}\right)/(n g_Y) \right)\tilde{L}_{ij}(\beta)}{\sum_{j\neq i}\tilde{L}_{ij}(\beta)},
\]
% where
% \[
% \widetilde{Y}_i= \frac{Y_i}{ \sqrt{n^{-1}\sum_{k=1}^n (Y_k-\overline{Y})^2}}\;\;\text{  and }\;\; \overline{Y}=n^{-1}\sum_{k=1}^n Y_k.
% \]
where $R_i\in\{1,\cdots,n\}$ is the index of $Y_i$ in the order statistics $\{Y_{(1)},\cdots,Y_{(n)}\}, $ that is  $Y_{(R_i)}=Y_{i},$ $1\leq i\leq n$.
The aim is to estimate $\beta$ and $g$ simultaneously,
using again $\hat\beta = \tilde{\beta}  /\| \tilde{\beta} \|$ and the bandwidth $g=\| \tilde{\beta} \|^{-1}.$
%\textcolor{red}{This is nothing but the estimation of the bandwidths by maximum likelihood. Add a reference?}
% In the quantity $L(\sqrt{12}(R_{i}-R_{j})\times\Vert \beta\Vert / n )$, $\Vert \beta\Vert$ is therefore the inverse of the bandwidth,
% and the factor $\sqrt{12}/n$ makes that the standard deviation of $\{ R_i \sqrt{12} / n,\,1\leq i\leq n\}$ is approximately equal to $1$.

% Yes, in the paper of Delecroix et al.,
% it is
% \[
% \sum_{i=1}^{n}\log\dfrac{\sum_{j\neq i}L((Y_i - Y_j)/h)L\left((X_{i}^{\prime}\beta-X_{j}^{\prime}\beta)/h\right)}{g\sum_{j\neq i}L\left((X_{i}^{\prime}\beta-X_{j}^{\prime}\beta)/h\right)}
% \]
% so I dropped the $g$, used the ranks of $Y_i$s, and adapted the bandwith.
For this test statistic, the bootstrap procedure considered is:
\begin{description}
\item [{(i)}] For $b\in\left\{ 1,\dots,\, B \right\} $
let $\widehat {U_{i}w(Z_{i})}^{*}( \beta) (t) = \eta_i \times \widehat {U_{i}w(Z_{i})}( \beta) (t)$
where the $\eta_i$'s are independent variables with the two-point distribution defined above.
\item [{(ii)}]
Define $T_{n}^{\{l\} *, b}$ as $T_{n}^{\{l\}}$ where the $\widehat {U_{i}w(Z_{i})}( \beta) (t)$'s
are replaced by the $\widehat {U_{i}w(Z_{i})}^{*}( \beta) (t)$'s. Repeat Step (i) $B$  times. Compute the empirical quantiles of $T_{n}^{\{l\} *, b}.$
\end{description}
We study the influence of bandwidth $h$ on empirical rejection under $H_0$ on the left part of Figure \ref{fig:SIMlaw},
where $h=c\times n^{-2/9}$ with $c\in\{ 2^{k/2}: k=-2,-1,0,1,2\}$, with $1000$ replications and $199$ bootstrap steps.
Because $\beta$ is not reestimated in the bootstrap procedure, we do not correct the estimation bias and the two rejection rate are very similar.
However, they are not far from the theoretical level.

We also investigate the empirical rejection rate for different values of the mixture proportion $\delta$ in the model (\ref{eq:Mod1law}).
%\textcolor{blue}{and compare our test statistic tailored for conditional law to the test statistic for conditional mean
%% applied to $Y$ and $Y^2$
%. TO BE DONE(?)
%For this purpose, notice that
%\begin{eqnarray*}
%\mathbb{E}[Y  \mid X] & = & (1-\delta)X^\prime\beta + \delta \Vert X\Vert.
%% \\
%% \mbox{and }\mathbb{E}[Y^2\mid X] & = & \exp\left\{2 + 2(X^{\prime}\beta)^2 \right\}.
%\end{eqnarray*}
%}
The results are presented in the right panel of Figure \ref{fig:SIMlaw}.
We used $1000$ replications for $\delta=0$, $500$ replications otherwise, and $199$ bootstrap steps.
For $\delta$ from $0.1$ to $0.2$, the empirical rejection rate decreases, but it resumes its rise after $\delta=0.3$.
On the basis of the simulation results, we could explain this through the estimate of the variance of $I_n^{(l)}(\tilde\beta).$ A small deviation $\delta>0$ in the model (\ref{eq:Mod1law}) induces less variance for $Y$ and for the estimator of $\beta_0$. As a consequence, the tail of the estimator of the variance of $I_n^{(l)}(\tilde\beta)$ is lighter and this produces more power in the case $\delta=0.1.$ When the deviation $\delta$ slightly increases beyond $\delta=0.1$, the variance of $I_n^{(l)}(\tilde\beta)$ becomes too important and locally we observe a loss of power. When $\delta$ increases more, the increase of the variance of $I_n^{(l)}(\tilde\beta)$ is dominated by the increase $I_n^{(l)}(\tilde\beta)$ and the test has more power.

\begin{figure}
\includegraphics[scale=0.6,angle=270]{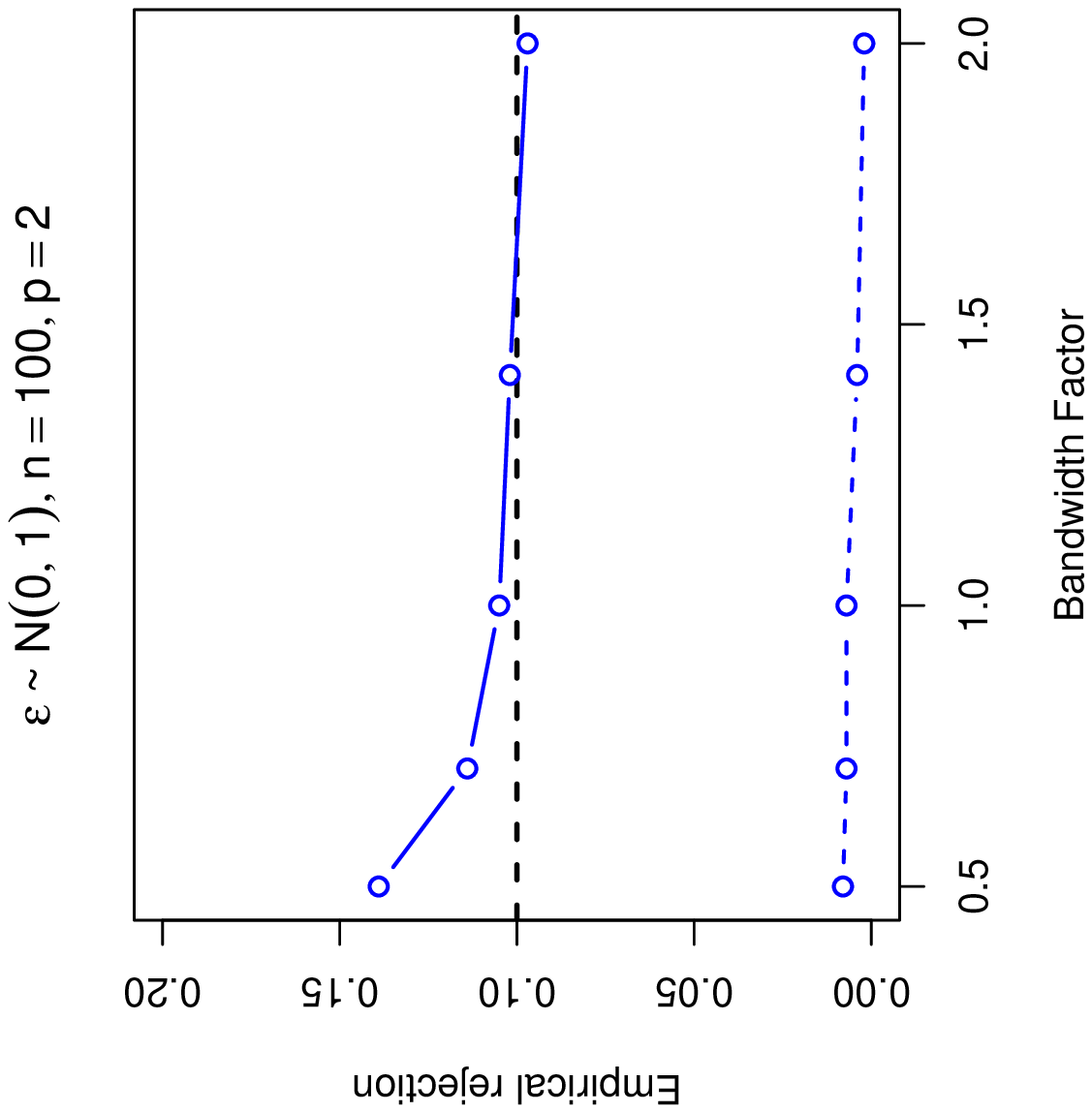}
\includegraphics[scale=0.6,angle=270]{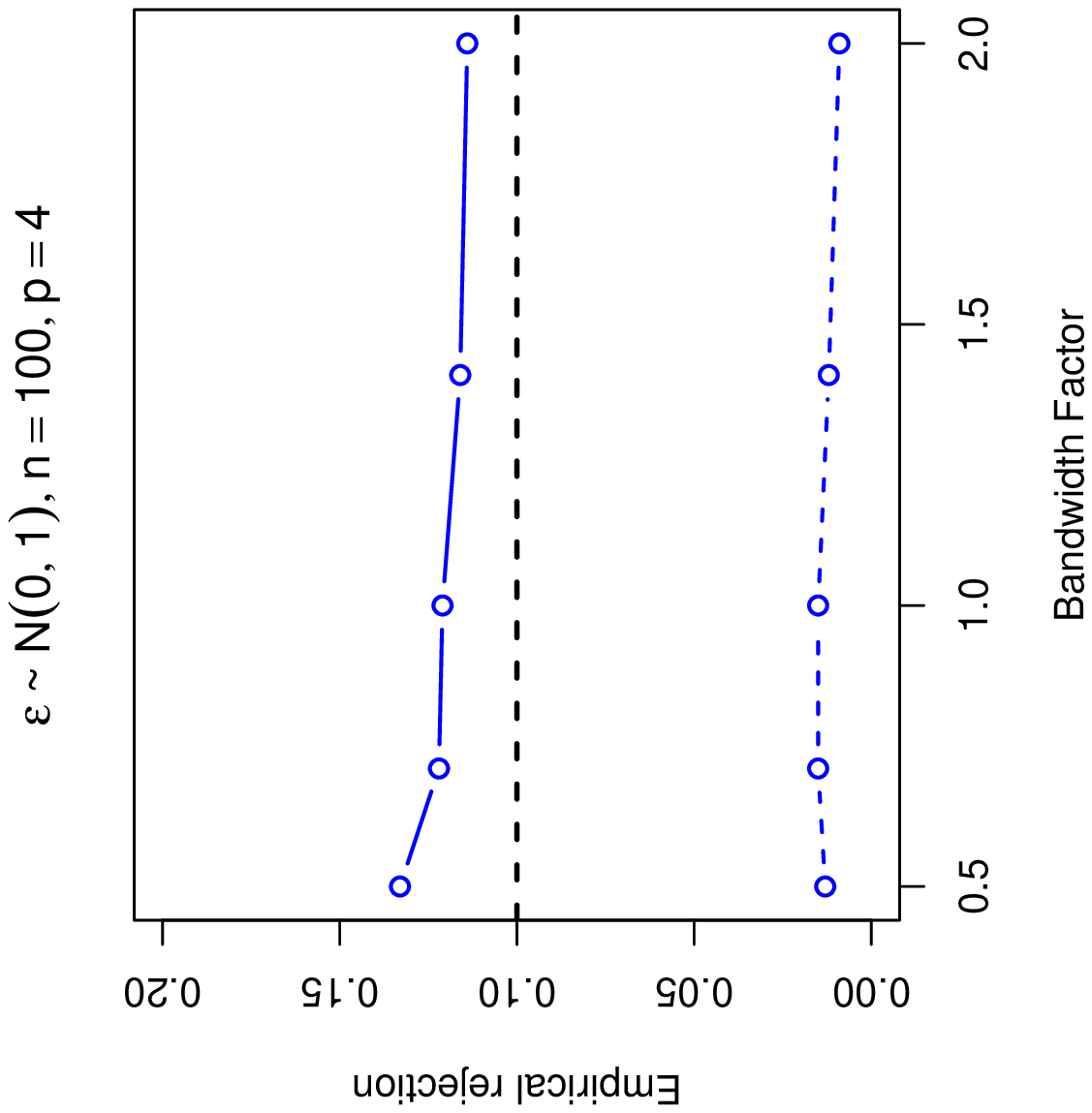}
\includegraphics[scale=0.6,angle=270]{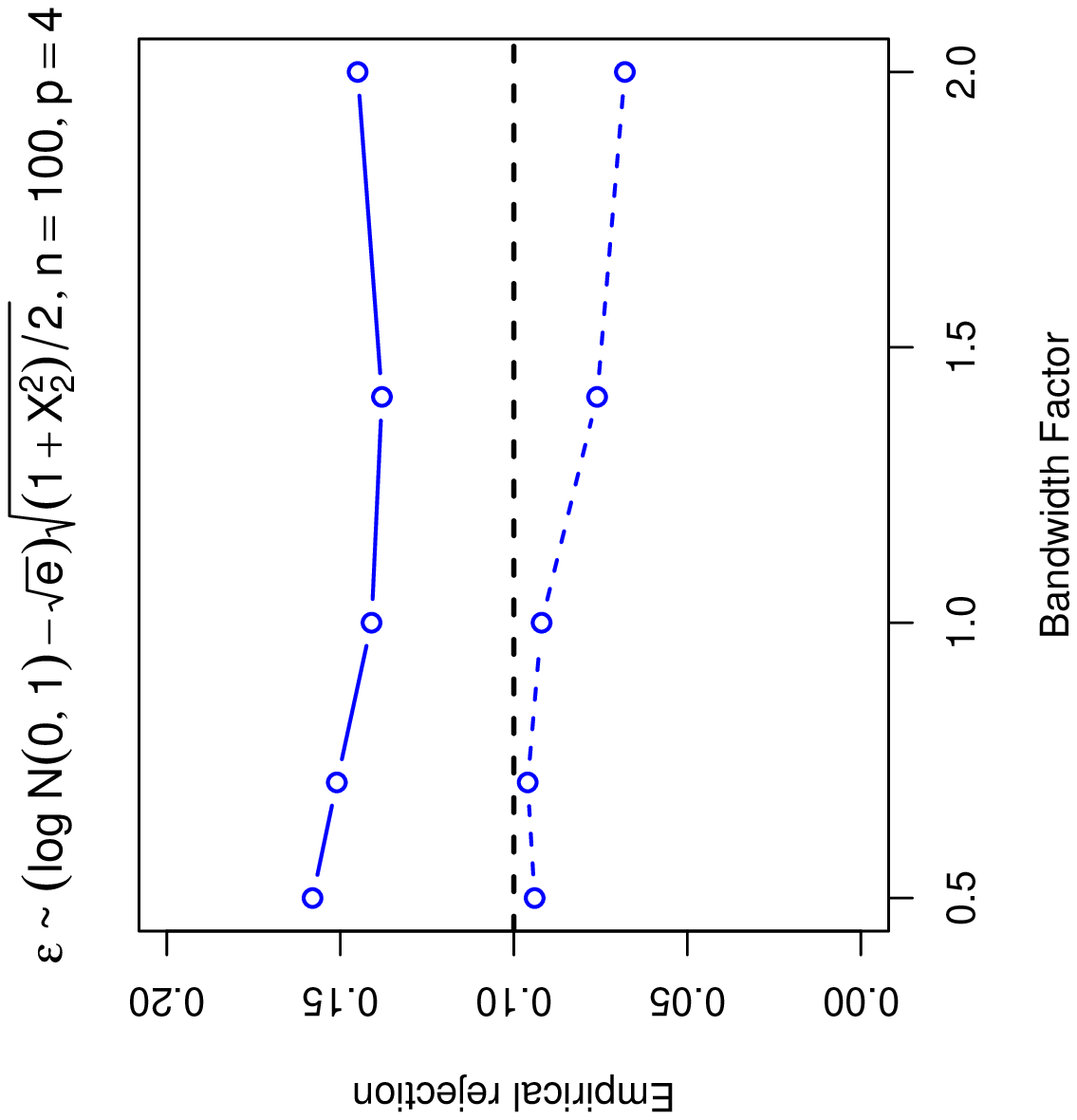}
\includegraphics[scale=0.6,angle=270]{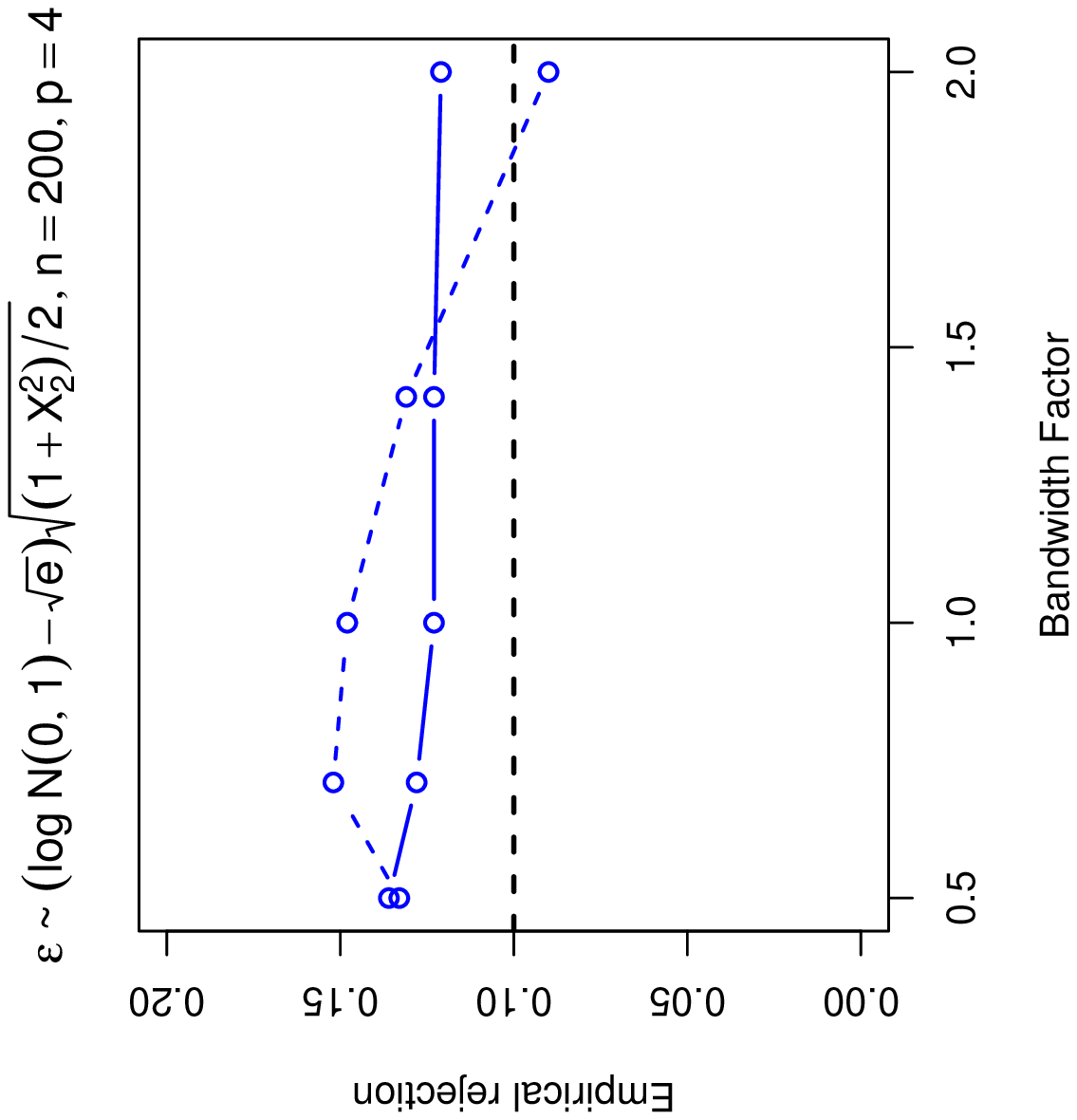}
\includegraphics[scale=0.6,angle=270]{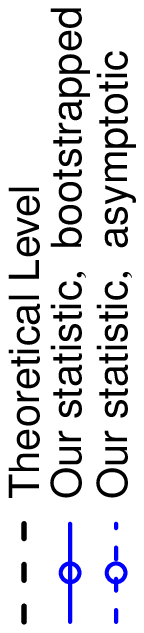}
 \vspace{-3.5cm}
\caption{Empirical rejections under $H_{0}$ as a function of the bandwidth
\label{fig:SIMlevel}}
\end{figure}
\begin{figure}
\includegraphics[scale=0.6,angle=270]{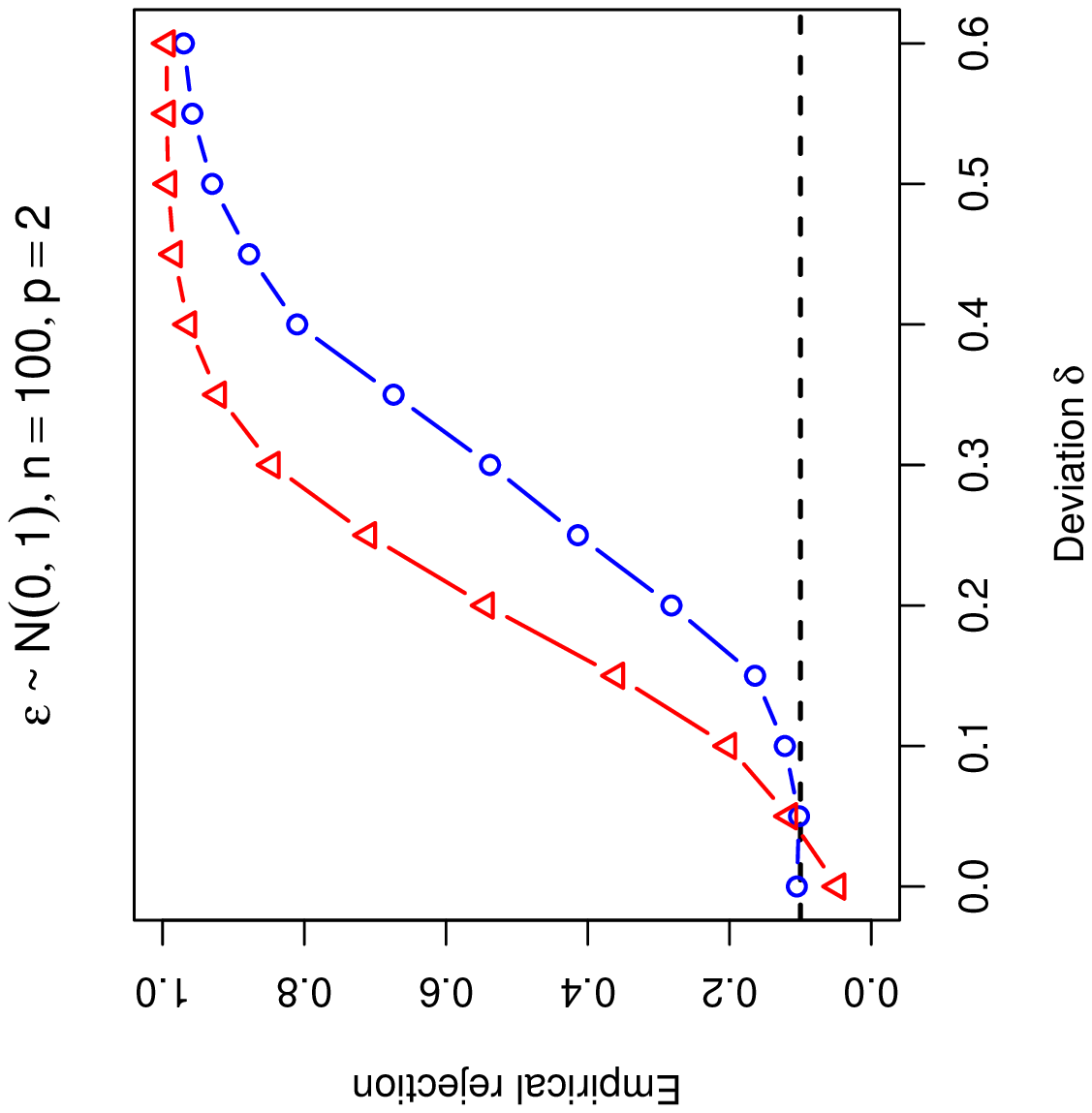}
\includegraphics[scale=0.6,angle=270]{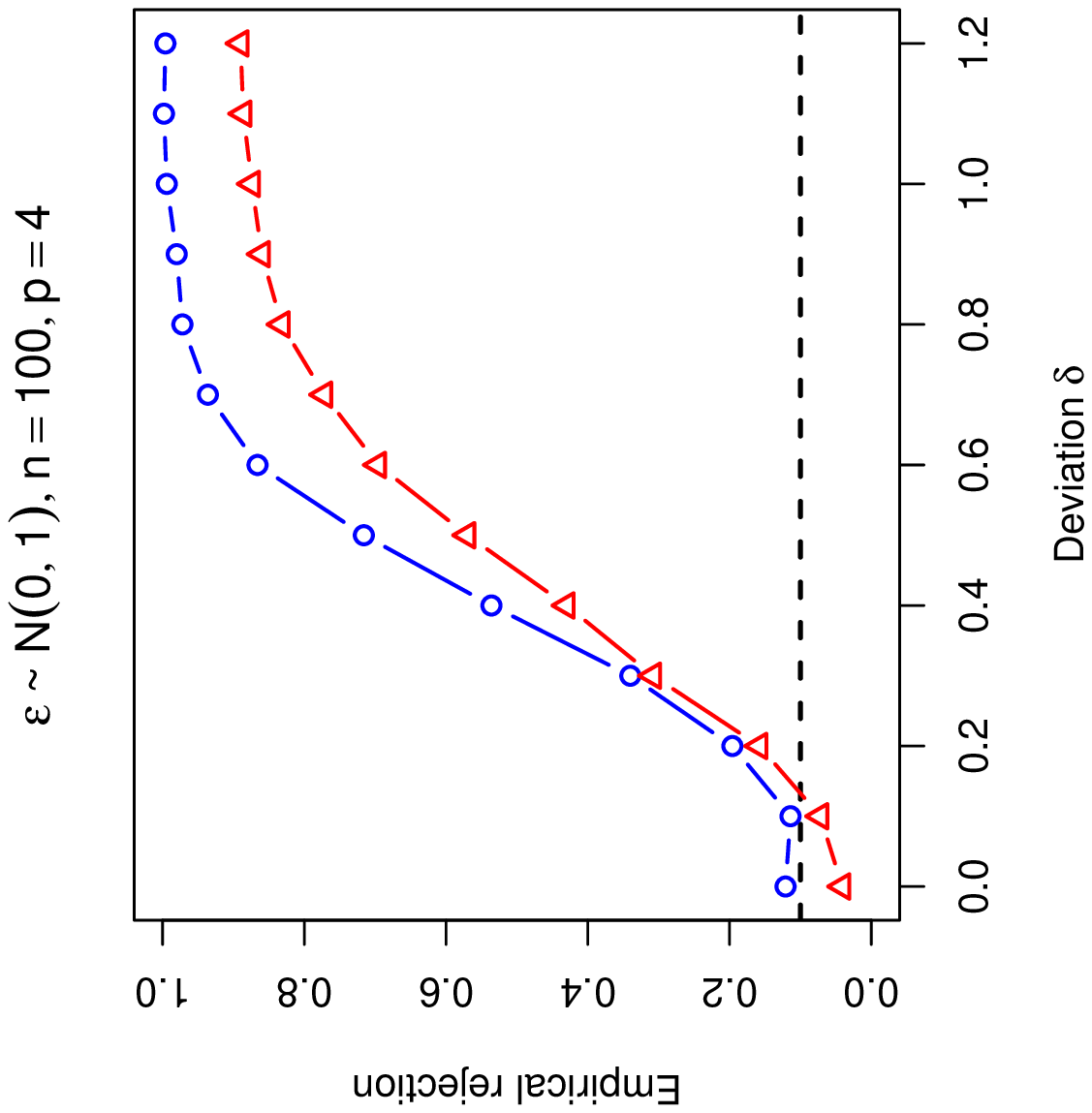}
\includegraphics[scale=0.6,angle=270]{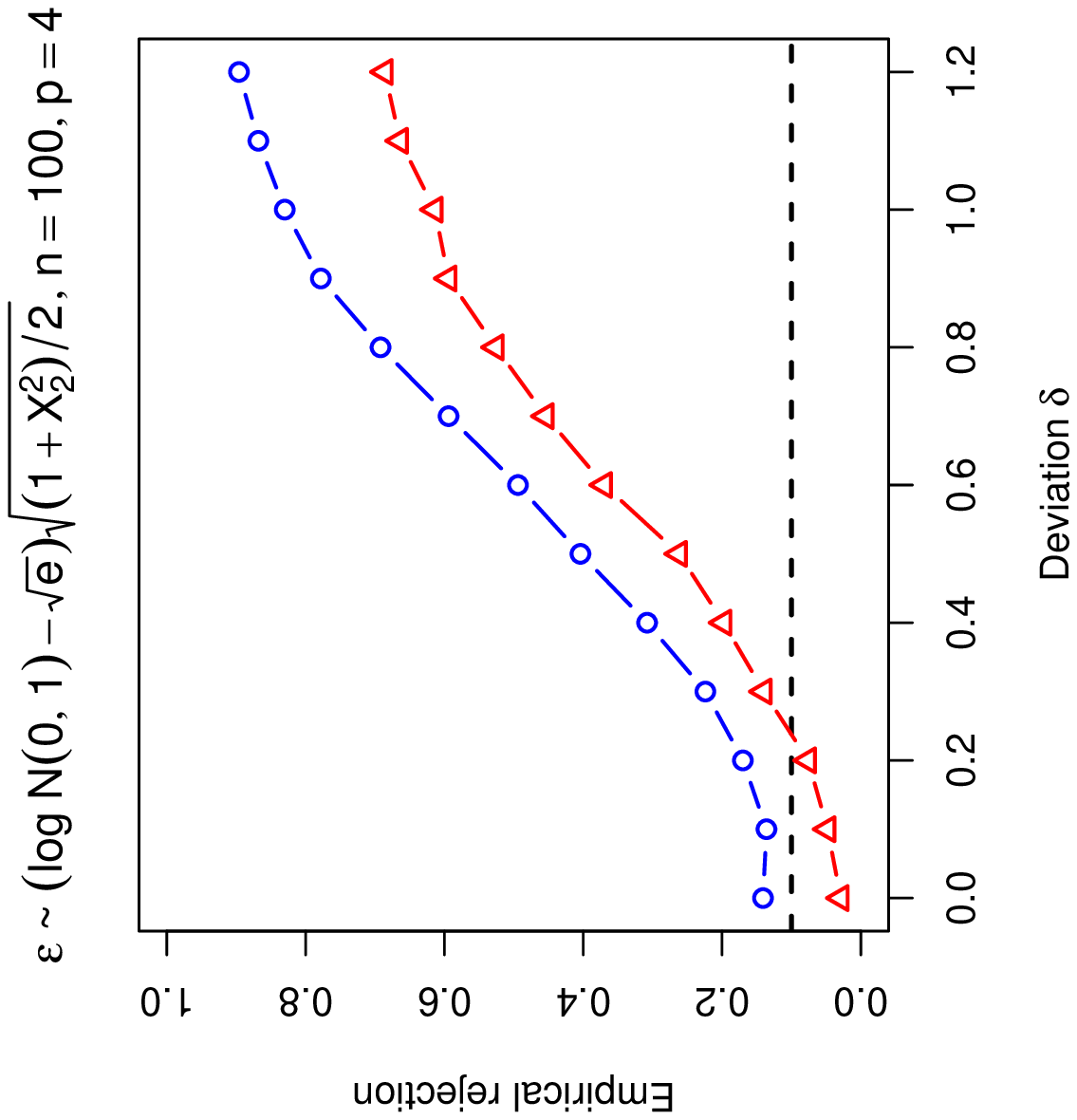}
\includegraphics[scale=0.6,angle=270]{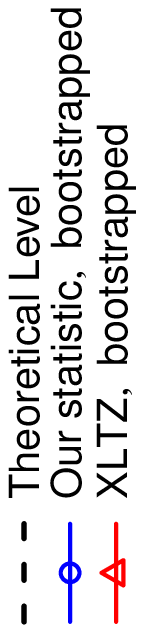}
% \vspace{-1cm}
\caption{Power curves for model (\ref{eq:Mod1}), $n=100$\label{fig:SIMpower}}
\end{figure}

\begin{figure}
\includegraphics[scale=0.6,angle=270]{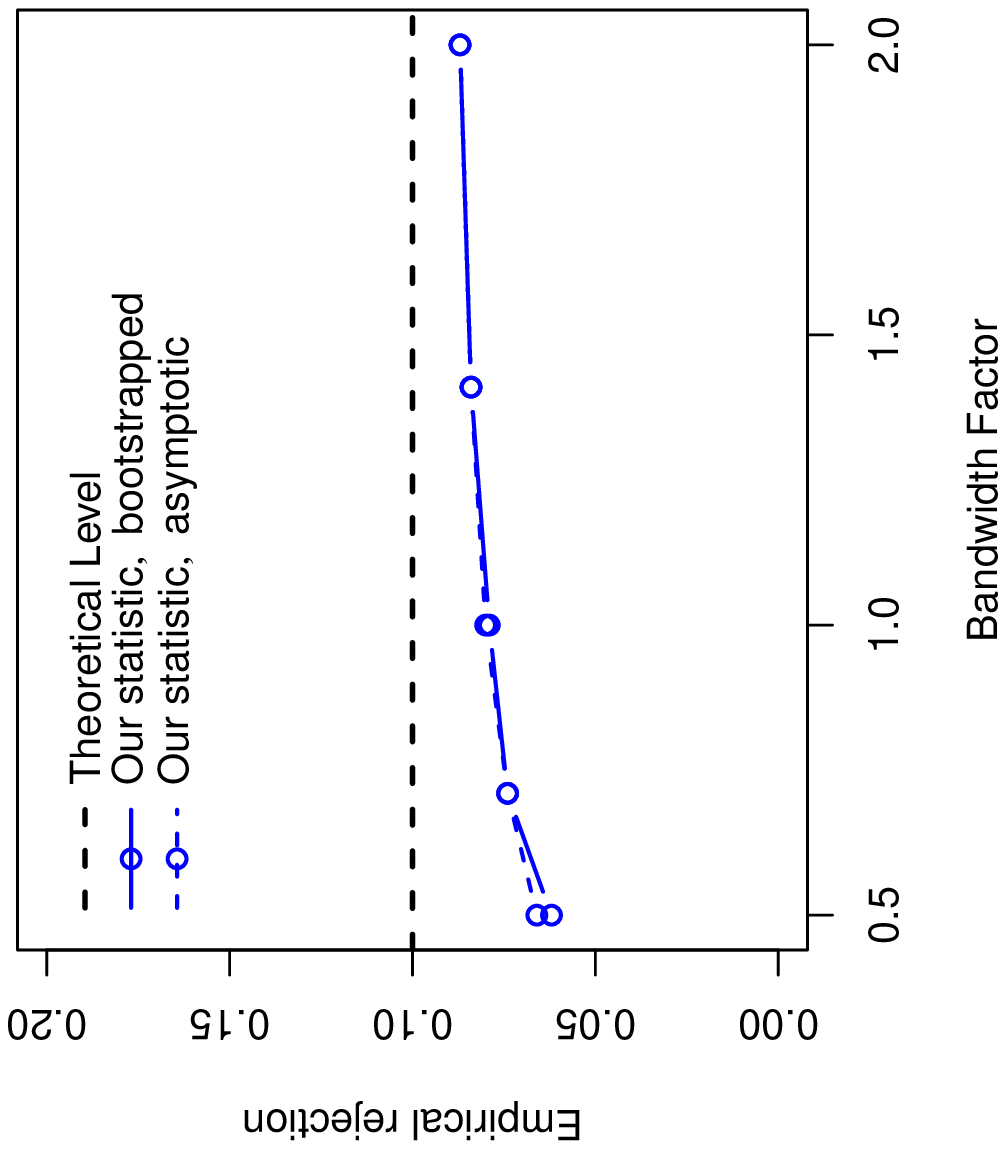}
\includegraphics[scale=0.6,angle=270]{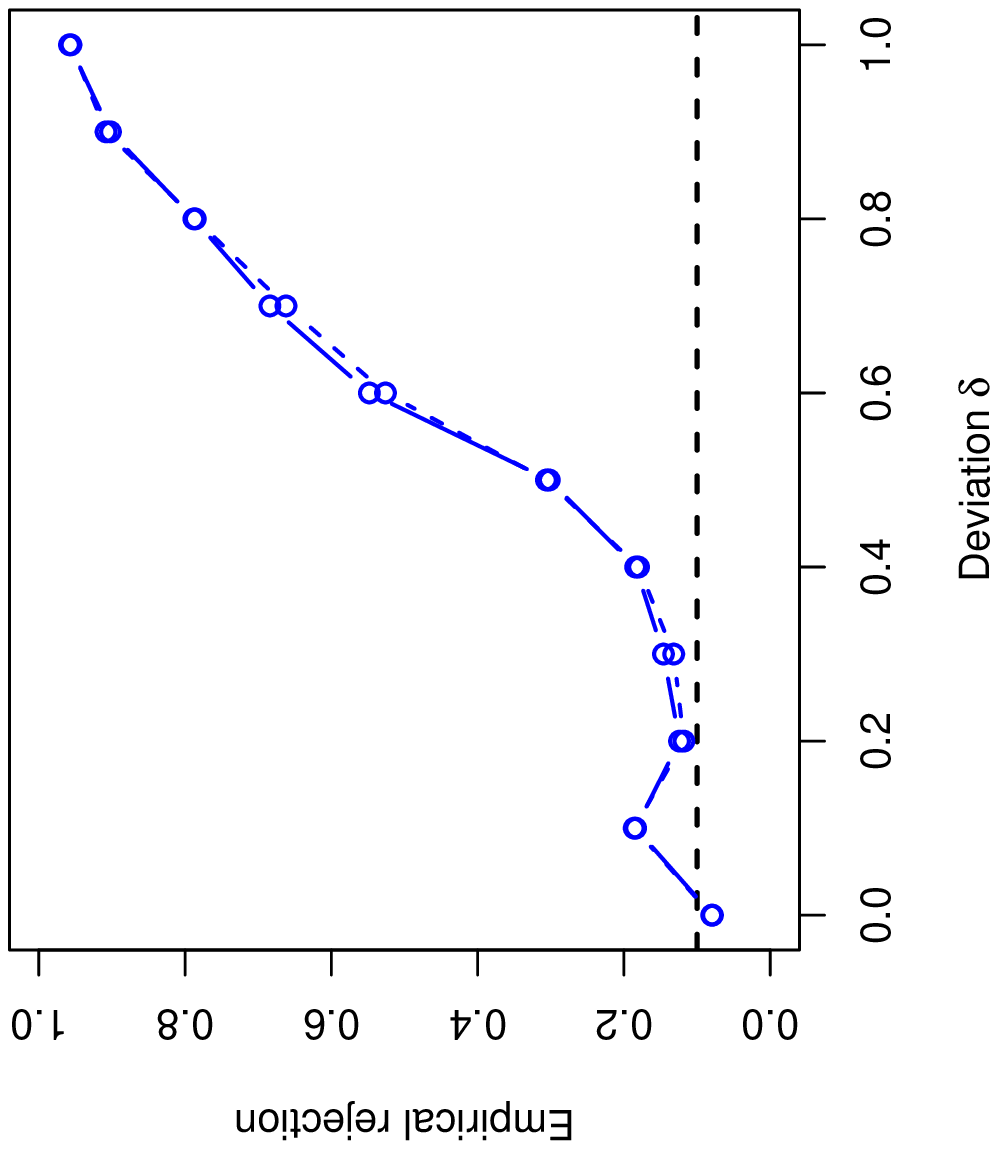}
% \vspace{-3.5cm}
\caption{Empirical rejections under $H_{0}$ and $H_{1}$ for conditional law, $n=200$.
On the left part, $h=c\times n^{-2/9}$ with varying $c$.
On the right part, $Y = \left(1-\delta\right)\mathcal{N}\left(X^{\prime}\beta,\,0.09\right)+\delta\mathcal{N}\left(\left\Vert X\right\Vert ,\,0.09\right)$.
\label{fig:SIMlaw}
}
%\newpage
\end{figure}

\section{Conclusions and furthers extensions}

We have constructed new smoothing-based test procedures for SIM hypotheses for mean regression
and for conditional law.
Smoothing is only used on the estimated index,
and the corresponding test statistics are asymptotically standard normal.
A quite effective wild bootstrap procedure allows to correct the critical values
with small samples. For simplicity we focused on univariate responses but, with obvious adjustments, our approach also applies to the case of multivariate responses. 
%For example, one can use single-index assumptions for the joint modeling of
%the conditional mean and of the conditional variance.
%$\left(Y,Y^2\right)$
% that allows to check mean and variance assumptions at the same time.
% as for instance considered by
See \cite{Picone2000} and \cite{Chen2009} for more general situations
with multivariate responses where our test methodology applies.
Moreover, our statistics directly generalize to test multiple index against fully nonparametric alternatives.
It suffice to consider the general methodology presented in section  \ref{secGeneral}
with $q$ equal to the number of indices.
Some other possible extensions that would require additional, though quite straightforward,
investigation are the goodness-of-fit checks of index quantile regressions,
see \cite{Kong2012},
and the functional index models, see \cite{Chen2011}.
Such extensions are left for future work.
% The quantile regression could also be managed.
% See \cite{Maistre2014} for some sketch of how to proceed.

%\pagebreak

\clearpage

%bibliography1
\bibliographystyle{ecta}
\bibliography{ManuscritAbre}
%bibliography2

\newpage
\setcounter{equation}{0}
\section{Appendix 1: assumptions and proofs}

Let $\mathcal{H}$ be the real line or the Hilbert space of squared integrable functions defined on $[0,1].$ Let $\langle \cdot,\cdot\rangle_{\mathcal{H}}$ and $\| \cdot\|_{\mathcal{H}}$ denote the associated inner product and norm.
For an observation $(Y_i,X_i^\prime)^\prime,$ $Y_i\in\mathbb{R}$ and $X_i\in\mathbb{R}^p,$ let  $Y_i(t) \equiv Y_i$ or $Y_i(t) =\mathbf{1}\{Y_i\leq \Phi^{-1}(t)\} ,$ and for any $\beta$  in the parameter set $\mathcal{B}\subset\mathbb{R}^p,$ let $r_i(t;{\beta}) = \mathbb{E}[Y_i(t) \mid Z_i({\beta})],$ $t\in[0,1].$ Thus, $Y_i(\cdot)$ is an element of $\mathcal{H}.$

Let $(\epsilon_i(\cdot),X_i^\prime)^\prime,$ $1\leq i \leq n,$ be random variables such that
$\epsilon_i(\cdot)\in\mathcal{H} $ and $X_i\in \mathbb{R}^p .$
Let $\bar\beta$ be some element in the parameter set $\mathcal{B}.$ Consider $r_i(t;\bar{\beta})$  that depends only on $Z_i(\bar{\beta})=X_i ^\prime \bar\beta$ and $\delta(X_i,t)$ be such that $\mathbb{E}[\delta(X_i,t)\mid
Z_i(\bar{\beta})]=0,$ $t\in[0,1].$ Define
$$
Y_{ni}(t) = r_i(t;\bar{\beta}) + r_n \delta(X_i,t) + \epsilon_i (t) ,\qquad t\in[0,1], 1\leq i\leq n,
$$
where $r_n,$ $n\geq 1,$ is some bounded sequence of real numbers. In particular that means $\mathbb{E} [Y_{ni}(\cdot) \mid
Z_i(\bar{\beta})]=r_i(\cdot;\bar{\beta}).$ The case of a null sequence $(r_n)$ corresponds to the null hypothesis, while a sequence tending to zero corresponds to Pitman alternatives.

\begin{assumption}\label{ass_app}
a) The random variables $(\epsilon_i(\cdot),X_i^\prime)^\prime,$ $1\leq i \leq n,$ are independent copies of $\epsilon(\cdot)\in\mathcal{H} $ and $X\in \mathbb{R}^p .$
Moreover, $X^\prime \bar{\beta}$ admits a bounded density $f_{\bar{\beta}}.$

b) $\mathbb{E}[\exp(\rho\|X_i\|)]<\infty$ for some $\rho>0$ and
$\mathbb{E}[\sup_t |r_i(t;\bar\beta) + \epsilon_i(t) |^a ]<\infty$ for some $a>8.$
Moreover,
$\mathbb{E} (\|\epsilon_i (\cdot)\|^2_{\mathcal{H}}  \mid X_i)$ is bounded.

c) For any $t\in[0,1],$ the map $v \mapsto \mathbb{E} [Y_{ni}(t) \mid
Z_i(\bar{\beta})=v]$ is twice differentiable. The second derivative $r_i^{\prime\prime}(\cdot;\bar{\beta})$ is uniformly Lipschitz (that is the Lipschitz constant independent of $t$) and uniformly bounded, while the first derivative
satisfies $\mathbb{E}[\|r_i^{\prime}(\cdot;\bar{\beta})\|_{\mathcal{H}}^4]<\infty.$

d) The function $f_{\bar\beta}(\cdot)$ is uniformly Lipschitz.

e) The function $\delta(\cdot,\cdot)$ is bounded.

f) The kernels $K$ and $L$ are symmetric integrable functions, differentiable except at most a finite set of points and $L^\prime$ is Lipschitz continuous. Moreover,  $\int_{\mathbb{R}}|L (t)| dt = \int |K (t)| dt =1$ and $\int_{\mathbb{R}}(|L^\prime (t)| + |K^\prime (t)|) dt <\infty.$ The map $v\mapsto |L^\prime(v) | /v$ is bounded in a neighborhood of the origin, $v^2 K(v)\rightarrow 0$ if $v\rightarrow \infty,$ and  $\int v^2 \{|L(v)| + |K(v)| \}dv< \infty.$ Moreover, the Fourier Transform $\mathcal{F}[K]$ is positive on the real line.

g) The bandwidths satisfy the conditions
$g,h\rightarrow 0$,  $h/g^2\rightarrow 0,$ $nh^{1/2}g^{4}\rightarrow 0,$ $r_n^2nh^{1/2}\rightarrow \infty.$ Moreover, $g=n^{-\gamma}$ with $\gamma\in(1/5, 1/4)$ and thus $nh^2\rightarrow \infty.$

\end{assumption}

\quad

\begin{proof}[Proof Proposition \ref{as_equiv}]
First let us remark that for any $(a_n),$ a sequence divergent to infinity,
\begin{equation}\label{bn_an}
\mathbb{P}\left(\max_{1 \leq i \leq n} \|X_i\| > a_n \ln n\right) \rightarrow 0 \quad \text{and} \quad \mathbb{P}\left(\|\beta_n -\bar \beta\|> a_n n^{-1/2}\right) \rightarrow 0.
\end{equation}
Moreover, at least for $\beta $ in a fixed but small enough neighborhood of $\bar\beta$, the matrix $\mathbf{A}\left(\beta\right)$ could be built such that the norm of each of the $p-1$ columns of $\mathbf{A}\left(\beta\right) - \mathbf{A}\left(\bar \beta\right)$ is  bounded by $c \|\beta_n -\bar \beta\|$ with $c$ a constant independent of $\beta$. Indeed,  one could consider $p-1$ independent vectors which completed by any
$\beta$ close to $\bar\beta$ form a basis. Then one could use the Gram-Schmidt procedure to orthonormalize the basis. By construction, the norm of any columns of  $\mathbf{A}\left(\beta\right) - \mathbf{A}\left(\bar \beta\right)$ is bounded by $ c \|\beta_n -\bar \beta\|$ for some $c$ depending only on the initial $p-1$ independent vectors. All these facts show that we can reduce the parameter set to $\mathcal{B}_n,$ $n\geq 1,$ a sequence of balls centered at $\bar\beta$ of radius converging to zero. Consider the set of elementary events \begin{equation}\label{eventE}
\mathcal{E}_n = \left\{ \max_{1 \leq i \leq n} \sup_{\beta\in\mathcal{B}_n} [\|Z_i(\beta) - Z_i(\bar \beta)\| +
 \|W_i(\beta) - W_i(\bar \beta)\| ]\leq b_n  \right\},
\end{equation}
where $b_n$ is a sequence such that $b_n\rightarrow 0.$ The equation (\ref{bn_an}) indicates that the sequences $\mathcal{B}_n$ and $b_n$ could be taken such that the radius of
$\mathcal{B}_n$ converges to zero slower than $n^{-1/2}$ and faster than $b_n,$ and
$b_n n^{1/2}/\ln n \rightarrow \infty.$ Then
$\mathbb{P}(\beta_n\in\mathcal{B}_n)\rightarrow 1$ and $\mathbb{P}(\mathcal{E}_n^c)$ decreases to zero faster than any negative power of the sample size $n.$ Hence, in the following  it will suffices to prove  the statements on the set $\{ \beta_n\in\mathcal{B}_n \}\cap \mathcal{E}_n.$

We will focus on $I_n(\beta_n)$ since the arguments for $\hat \omega(\beta_n)$ are similar and much simpler.
Hereafter, by abuse,  we write  $Y_i(t)$ instead of $Y_{ni}(t)$ even when $r_n \neq 0.$
To prove  that
$
I_{n}({\beta_n}) - I_{n} (\bar \beta)= o_{\mathbb{P}}(I_{n}(\bar\beta))$ we will show below that
$
I_{n}({\beta_n}) - I_{n} (\bar \beta)= o_{\mathbb{P}}(n^{-1}h^{-1/2}+r_n^2).$
This shows that $
I_{n}({\beta_n})$ is negligible compared to $I_{n} (\bar \beta)$ both on the null and alternative hypotheses. Indeed, under the null hypothesis, $r_n\equiv 0,$ $\bar \beta=\beta_0$ and $n h^{1/2} I_{n}({\beta_0})$ is asymptotically centered normal distributed, while on the alternative the $I_{n} (\bar \beta)$ is driven by a term of order $r_n^2.$

In the following $C,$ $C'$,... denote  constants that may have different values from line to line.
Let us simplify notation and write
$$
\widehat V_i(\beta)=\widehat {U_{i}w(Z_{i})}(\beta)
$$
and
\begin{equation}\label{simpli}
L_{ij}(\beta)=L_{ij}(\beta,g),\; K_{ij}(\beta)=K_{ij}(\beta,h),\;
\phi _{ij}(\beta) = \phi( W_{i} ({\beta})-W_{j} ({\beta} )).
\end{equation}
Then,
\begin{eqnarray*}
I_{n} ({\beta}) -I_{n} ({\bar{\beta}})\!\! & = & \!\!\dfrac{1}{n(n-1)h}\sum_{i\neq j}\!\!\left[\left\langle \widehat V_i(\beta),\; \widehat V_j(\beta)\right\rangle_{\mathcal{H}} \!- \left\langle \widehat V_i(\bar{\beta}),\; \widehat V_j(\bar{\beta})\right\rangle_{\mathcal{H}}\right]
K_{ij}({\bar{\beta}})\phi _{ij}(\bar{\beta})\\
\!\! & + & \!\!\dfrac{1}{n(n-1)h}\sum_{i\neq j}\!\!\left\langle \widehat V_i(\bar{\beta}),\; \widehat V_j(\bar{\beta})\right\rangle_{\mathcal{H}}\left[K_{ij}({\beta})\phi _{ij}(\beta) - K_{ij}({\bar{\beta}})\phi _{ij}(\bar{\beta}) \right]
\\
&+&\dfrac{1}{n(n-1)h}\sum_{i\neq j}\!\!\left[\left\langle \widehat V_i(\beta),\; \widehat V_j(\beta)\right\rangle_{\mathcal{H}} \!- \left\langle \widehat V_i(\bar{\beta}),\; \widehat V_j(\bar{\beta})\right\rangle_{\mathcal{H}}\right]\\
&&\qquad\qquad\qquad\qquad\qquad\qquad \times
\left[K_{ij}({\beta})\phi _{ij}(\beta) - K_{ij}({\bar{\beta}})\phi _{ij}(\bar{\beta}) \right]\\
&=& D_{n1}(\beta)+D_{n2}(\beta)+D_{n3}(\beta).
\end{eqnarray*}
Let us investigate the uniform rates of $D_{n1}$ and $D_{n2}$, the term  $D_{n3}$ being uniformly smaller. We can write
\begin{eqnarray*}
D_{n1} (\beta)& = &
\dfrac{2}{n(n-1)h}\sum_{i\neq j} \left\langle \widehat V_i(\beta) - \widehat V_i(\bar{\beta}),\; \widehat V_j(\bar{\beta})\right\rangle_{\mathcal{H}}
K_{ij}({\bar{\beta}})\phi _{ij}(\bar{\beta})\\
&&+ \dfrac{1}{n(n-1)h}\sum_{i\neq j} \left\langle \widehat V_i(\beta) - \widehat V_i(\bar{\beta}),\; \widehat V_j(\beta)- \widehat V_j(\bar{\beta})\right\rangle_{\mathcal{H}}
K_{ij}({\bar{\beta}})\phi _{ij}(\bar{\beta})\\
&=& 2D_{n11}  (\beta)+ D_{n12} (\beta).
\end{eqnarray*}
Moreover,
\begin{eqnarray*}%\label{cd2}
\widehat V_i(\bar{\beta})(t) &=& \frac{1}{n-1}\sum\limits _{k\neq i} [Y_i(t) - Y_k (t)]
\frac{1}{g}L_{ik}(\bar{\beta})
 \\
&=& [Y_i(t)  - r_i(t;\bar{\beta}) ]f_{\bar{\beta}}(X_i^\prime \bar{\beta}) \\
&&+  [Y_i(t) - r_i(t;\bar{\beta})] \left[\frac{1}{n-1}\sum\limits _{k\neq i} \frac{1}{g}L_{ik}(\bar{\beta})- f_{\bar{\beta}}(X_i^\prime \bar{\beta})\right]\\
&&+ \frac{1}{n-1}\sum\limits _{k\neq i} \{r_i(t;\bar{\beta}) - r_k(t;\bar{\beta})\}\frac{1}{g}L_{ik}(\bar{\beta})\\
&&- \frac{1}{n-1}\sum\limits _{k\neq i} \{ Y_k (t) - r_k(t;\bar{\beta})\}\frac{1}{g}L_{ik}(\bar{\beta})\\
&=& [Y_i(t)  - r_i(t;\bar{\beta}) ]f_{\bar{\beta}}(X_i^\prime \bar{\beta}) + [Y_i(t) - r_i(t;\bar{\beta})] R_{1,ni} + R_{2,ni}(t) - R_{3,ni}(t)  ,
\end{eqnarray*}
where, by  Lemma \ref{deltas2}
$$
\sup_{1\leq i \leq n}\!\!|R_{1,ni}| = O_{\mathbb{P}}(g + n^{-1/2} g^{-1/2}\ln^{1/2} n),
$$
and, Lemma \ref{Deltas} yields
$$
\sup_{1\leq i \leq n}\sup_{t\in[0,1]}|R_{3,ni}(t)|  = O_{\mathbb{P}}(n^{-1/2} g^{-1/2}\ln^{1/2} n ).
$$
A representation of $R_{2,ni}(t)$ is provided in  Lemma \ref{deltas3}.
On the other hand,
\begin{eqnarray*}%\label{cd2}
\widehat V_i(\beta)(t) - \widehat V_i(\bar{\beta})(t) &=& \frac{1}{n-1}\sum\limits _{k\neq i} [Y_i(t) - Y_k (t)]\left[
\frac{1}{g}L_{ik}(\beta) -
\frac{1}{g}L_{ik}(\bar{\beta}) \right].
\end{eqnarray*}

\subsubsection*{Uniform bounds for  $D_{n1}.$}
\textbf{The rate of $D_{n11}.$} Since $
Y_i(t) = r_i(t;\bar{\beta}) + r_n \delta(X_i,t) + \epsilon_i(t),
$
with $\mathbb{E}[\epsilon_i (t) \mid X_i] =0,$ we have $D_{n11}(\beta) =  D_{n111}(\beta) + R_{n11}(\beta)$ with
\begin{eqnarray*}
D_{n111}(\beta)&=&\dfrac{1}{n(n-1)^2h}\sum_{i\neq j\neq k} \left\langle Y_i(\cdot) - Y_k (\cdot),\; Y_j(\cdot)  - r_j(\cdot;\bar{\beta}) \right\rangle_{\mathcal{H}}f_{\bar{\beta}}(X_j^\prime \bar{\beta})\\
&& \qquad\qquad \qquad\qquad \qquad \times\left[
\frac{1}{g}L_{ik}(\beta) - \frac{1}{g}L_{ik}(\bar \beta) \right] K_{ij}({\bar{\beta}})\phi _{ij}(\bar{\beta})\\
&=& \dfrac{1}{n(n-1)^2h}\sum_{i\neq j\neq k} \left\langle Y_i(\cdot) - Y_k (\cdot),\; \epsilon_j(\cdot) \right\rangle_{\mathcal{H}}f_{\bar{\beta}}(X_j^\prime \bar{\beta})
\\
&& \qquad\qquad \qquad\qquad \qquad \times\left[
\frac{1}{g}L_{ik}(\beta) - \frac{1}{g}L_{ik}(\bar \beta) \right]  K_{ij}({\bar{\beta}})\phi _{ij}(\bar{\beta}),
\end{eqnarray*}
and $R_{n11}(\beta)= D_{n11}(\beta) -  D_{n111}(\beta) .$ We decompose
\begin{eqnarray*}
D_{n111} (\beta)&=& \dfrac{1}{n(n-1)^2h}\sum_{i\neq j\neq k} \left\langle \epsilon_i(\cdot) - \epsilon_k (\cdot),\; \epsilon_j(\cdot) \right\rangle_{\mathcal{H}}f_{\bar{\beta}}(X_j^\prime \bar{\beta})\\
&& \qquad\qquad \qquad\times \left[
\frac{1}{g}L_{ik}(\beta) - \frac{1}{g}L_{ik}(\bar{\beta}) \right] K_{ij}({\bar{\beta}})\phi _{ij}(\bar{\beta}) \\
& & + \dfrac{1}{n(n-1)^2h}\sum_{i\neq j\neq k} \left\langle r_i(\cdot;\bar\beta) - r_k(\cdot;\bar\beta),\; \epsilon_j(\cdot)\right\rangle_{\mathcal{H}}f_{\bar{\beta}}(X_j^\prime \bar{\beta})\\
&& \qquad\qquad \qquad\times \left[
\frac{1}{g}L_{ik}(\beta) - \frac{1}{g}L_{ik}(\bar{\beta}) \right] K_{ij}({\bar{\beta}})\phi _{ij}(\bar{\beta})
\\
& & + \dfrac{r_n}{n(n-1)^2h}\sum_{i\neq j\neq k} \left\langle \delta(X_i,t) - \delta(X_k,t),\; \epsilon_j(\cdot)\right\rangle_{\mathcal{H}}f_{\bar{\beta}}(X_j^\prime \bar{\beta})\\
&& \qquad\qquad \qquad\times \left[
\frac{1}{g}L_{ik}(\beta) - \frac{1}{g}L_{ik}(\bar{\beta}) \right] K_{ij}({\bar{\beta}})\phi _{ij}(\bar{\beta})
\\
&=& D_{n1111}(\beta)+D_{n1112}(\beta)+ r_n D_{n1113}(\beta).
\end{eqnarray*}
The quantity $gh D_{n1111}(\beta)$ could be decomposed in a sum of degenerate $U-$process of order 3 and another one of order 2 indexed by $\beta.$ To bound them  we use the maximal inequality of \cite{Sherman1994}. Since $nh^2, ng^{4} \rightarrow \infty,$ deduce that the degenerate $U-$process of order 3 is of uniform rate
$$
n^{-3/2} O_{\mathbb{P}} ( h^{\alpha/2} \{ b_n^2 g^{-1} \}^{\alpha/2}   ) = gh \times o_{\mathbb{P}} ( n^{-1} h^{-1/2}),
$$
over any sequence of balls centered at $\bar \beta$ with radius decreasing to zero faster than $b_n,$  where $b_n$ is a sequence such that $b_n n^{1/2}/\ln n \rightarrow \infty$ and $\alpha$ could be a number in the interval $(0,1)$ arbitrarily close to 1. The details on how the maximal inequality of \cite{Sherman1994} applies are provided  below for deriving the uniform rate of $D_{n12}.$
To bound the right-hand side term in that maximal inequality we use the fact that $\mathbb{E}(\| \epsilon \| _{\mathcal{H}}^2\mid X)$ and $f_{\bar{\beta}}(X^\prime \bar{\beta})$ are bounded and the uniform bounds (\ref{eeqq3}), (\ref{eeqq4}) and (\ref{eeqq5}) from Lemma \ref{mon_tric} in the Appendix.
Using very similar arguments, the degenerate $U-$process of order 2 in the decomposition of $gh D_{n1111}(\beta)$  could be shown to be of uniform rate
$$
  n^{-1} O_{\mathbb{P}} ( h^{\alpha/2} \{ b_n^2 g^{-1}  \}^{\alpha/2}    )=
 gh \times o_{\mathbb{P}} ( n^{-1} h^{-1/2})
$$
provided that $nh^2, ng^{4}\rightarrow \infty$  and $\alpha$ is sufficiently close to 1.
Next, for $ng D_{n1112}(\beta),$ that is centered, use the Hoeffding decomposition and the regularity of the function $v\mapsto \mathbb{E}[ Y(t)\mid Z(\bar \beta) = v ]. $
For  the   degenerate $U-$processes of order 3 and 2 in the Hoeffding decomposition of  $D_{n1112}(\beta)$ we apply  the maximal inequality of \cite{Sherman1994} as previously. Deduce the respective uniform rates over $\mathcal{B}_n$
$$
 g^2  n^{-3/2} O_{\mathbb{P}} ( h^{\alpha/2} \{ b_n^2 g^{-1}  \}^{\alpha/2}   ) = gh\times o_{\mathbb{P}} ( n^{-1} h^{-1/2}),
$$
and
$$
  g^2  n^{-1} O_{\mathbb{P}} (  h^{\alpha/2} \{ b_n^2 g^{-1}  \}^{\alpha/2}      ) = gh\times o_{\mathbb{P}} ( n^{-1} h^{-1/2}).
$$
It remains the $U-$process of order 1. Using again the bounds from Lemma \ref{mon_tric}, deduce the uniform rate over $\mathcal{B}_n$
$$
  g^2  n^{-1/2} O_{\mathbb{P}} (  h^{\alpha} \{ b_n^2 g^{-1}  \}^{\alpha/2}      ) = gh\times o_{\mathbb{P}} ( n^{-1} h^{-1/2}).
$$
Deduce $D_{n1112}(\beta_n)=o_{\mathbb{P}} (n^{-1}h^{-1/2}).$ For $gh D_{n1113}(\beta)$ the arguments are similar, but without the $g^2$ factor, and yield the uniform rate
$$
n^{-1/2} O_{\mathbb{P}} (  h^{\alpha} \{ b_n^2 g^{-1}  \}^{\alpha/2}      ) =  gh\times o_{\mathbb{P}} ( n^{-1} h^{-1/2}) = o_{\mathbb{P}} ( n^{-1/2}h^{-1/4}),
$$
provided $nh^2, ng^{4}\rightarrow\infty$ and $\alpha$ is sufficiently close to 1.
Deduce that $D_{n111}(\beta_n) =  o_{\mathbb{P}} ( n^{-1} h^{-1/2} +r_n^2).$

For $R_{n11}(\beta)$ we can write
\begin{eqnarray*}
R_{n11}(\beta) &=& \dfrac{1}{n(n-1)^2h}\sum_{i\neq j} \left\langle Y_i(\cdot) - Y_j (\cdot),\; Y_j(\cdot)  - r_j(t;\bar{\beta}) \right\rangle_{\mathcal{H}}f_{\bar{\beta}}(X_j^\prime \bar{\beta})\\
&& \qquad\qquad \qquad\times \left[
\frac{1}{g}L_{ij}(\beta) - \frac{1}{g}L_{ij}(\bar{\beta}) \right] K_{ij}({\bar{\beta}})\phi _{ij}(\bar{\beta}) \\
& & + \dfrac{1}{n(n-1)^2h}\sum_{i\neq j, i\neq k} \left\langle Y_i(\cdot) - Y_k (\cdot),\; Y_j(\cdot)  - r_j(t;\bar{\beta}) \right\rangle_{\mathcal{H}}R_{1,nj}\\
&& \qquad\qquad \qquad\times \left[
\frac{1}{g}L_{ik}(\beta) - \frac{1}{g}L_{ik}(\bar{\beta}) \right] K_{ij}({\bar{\beta}})\phi _{ij}(\bar{\beta})
\\
& & + \dfrac{1}{n(n-1)^2h}\sum_{i\neq j, i\neq k} \left\langle Y_i(\cdot) - Y_k (\cdot),\; R_{2,nj}(\cdot)+
R_{3,nj}(\cdot)\right\rangle_{\mathcal{H}}\\
&& \qquad\qquad \qquad\times \left[
\frac{1}{g}L_{ik}(\beta) - \frac{1}{g}L_{ik}(\bar{\beta}) \right] K_{ij}({\bar{\beta}})\phi _{ij}(\bar{\beta})
\\
&=& R_{n111}(\beta)+R_{n112}(\beta)+R_{n113}(\beta).
\end{eqnarray*}
We only investigate $R_{n111} (\beta)$, the terms $R_{n112} (\beta)$ and $R_{n113} (\beta)$ are uniformly smaller compared to $D_{n111} (\beta)$.
We can write
\begin{eqnarray*}
R_{n111} (\beta)&=& \dfrac{1}{n-1} \; \dfrac{1}{n(n-1)h}\sum_{i\neq j} \left\langle \epsilon_i(\cdot) - \epsilon_j (\cdot),\; \epsilon_j(\cdot) \right\rangle_{\mathcal{H}}f_{\bar{\beta}}(X_j^\prime \bar{\beta})\\
&& \qquad\qquad \qquad\times \left[
\frac{1}{g}L_{ij}(\beta) - \frac{1}{g}L_{ij}(\bar{\beta}) \right] K_{ij}({\bar{\beta}})\phi _{ij}(\bar{\beta}) \\
& & + \dfrac{1}{n-1} \;\dfrac{1}{n(n-1)h}\sum_{i\neq j} \left\langle r_i(\cdot;\bar\beta) - r_j(\cdot;\bar\beta),\; \epsilon_j(\cdot)\right\rangle_{\mathcal{H}}f_{\bar{\beta}}(X_j^\prime \bar{\beta})\\
&& \qquad\qquad \qquad\times \left[
\frac{1}{g}L_{ij}(\beta) - \frac{1}{g}L_{ij}(\bar{\beta}) \right] K_{ij}({\bar{\beta}})\phi _{ij}(\bar{\beta})
\\
& & + \dfrac{1}{n-1} \; \dfrac{r_n}{n(n-1)h}\sum_{i\neq j} \left\langle \delta(X_i,t) - \delta(X_j,t),\; \epsilon_j(\cdot)\right\rangle_{\mathcal{H}}f_{\bar{\beta}}(X_j^\prime \bar{\beta})\\
&& \qquad\qquad \qquad\times \left[
\frac{1}{g}L_{ij}(\beta) - \frac{1}{g}L_{ij}(\bar{\beta}) \right] K_{ij}({\bar{\beta}})\phi _{ij}(\bar{\beta})
\\
&=& R_{n1111}(\beta)+R_{n1112}(\beta)+ r_n R_{n1113}(\beta).
\end{eqnarray*}
The leading term in $R_{n1111}(\beta)$
is
$$
\dfrac{1}{n-1} \dfrac{1}{n(n-1)h}\sum_{i\neq j} \left\|\epsilon_j (\cdot) \right\|^2_{\mathcal{H}}f_{\bar{\beta}}(X_j^\prime \bar{\beta}) \left[
\frac{1}{g}L_{ij}(\beta) - \frac{1}{g}L_{ij}(\bar{\beta}) \right] K_{ij}({\bar{\beta}})\phi _{ij}(\bar{\beta}).
$$
Use the boundedness of $\mathbb{E}[\left\|\epsilon_j (\cdot) \right\|^2_{\mathcal{H}}\mid X_j]$ and
$f_{\bar{\beta}}(X_j^\prime \bar{\beta}),$
and Lemma \ref{mon_tric} to deduce that $R_{n1111}(\beta_n)=o_{\mathbb{P}} (n^{-1}).$
Gathering facts deduce that
$$
D_{n11} (\beta_n) =  o_{\mathbb{P}}(n^{-1}h ^{-1/2} + r_n^2).
$$

\quad

\textbf{The rate of $D_{n12}.$} We have
$$
\widehat V_i(\beta)(t) - \widehat V_i(\bar{\beta})(t) = Y_i(t) \Delta_{1,ni}(\beta) +  \Delta_{2,ni}(\beta)
$$
with $ \Delta_{1,ni}(\beta) $ and $   \Delta_{2,ni}(\beta)$ independent of $t$ and
$$\sup_{1\leq i\leq n } \sup_{\beta\in\mathcal{B}_n } \{  |\Delta_{1,ni}| +  |\Delta_{2,ni}|  \} = O_{\mathbb{P}}(n^{-1/2}g^{-1/2} \ln^{1/2}n + b_n );$$
see Lemma \ref{Deltas}. Replacing and taking absolute values, deduce
$$
D_{n12}(\beta_n) = O_{\mathbb{P}}(n^{-1}g^{-1} \ln n + n^{-1} \ln^2 n) = o_{\mathbb{P}}(n^{-1}h^{-1/2}),
$$
since $g^{-1} h^{1/2} \rightarrow 0$ and $h\ln^4 n \rightarrow 0.$

Gathering facts deduce that
$$
D_{n1} (\beta_n) = D_{n11} (\beta_n) + D_{n12} (\beta_n) = o_{\mathbb{P}}(n^{-1}h ^{-1/2} + r_n^2).
$$

%\newpage

\subsubsection*{Uniform bounds for  $D_{n2}.$}
%\textbf{The rate of $D_{n2}.$}
We have
\begin{eqnarray*}
D_{n2}(\beta) &=& \dfrac{1}{n(n-1)h}\sum_{i\neq j}\!\!\left\langle \widehat V_i(\bar{\beta}),\; \widehat V_j(\bar{\beta})\right\rangle_{\mathcal{H}}\left[K_{ij}({\beta})\phi _{ij}(\beta) - K_{ij}({\bar{\beta}})\phi _{ij}(\bar{\beta}) \right]\\
&=& \dfrac{1}{n(n-1)h}\sum_{i\neq j}\!\!\left\langle  [Y_i(t)  - r_i(t;\bar{\beta}) ]f_{\bar{\beta}}(X_i^\prime \bar{\beta}),\;  [Y_j(t)  - r_j(t;\bar{\beta}) ]f_{\bar{\beta}}(X_j^\prime \bar{\beta})\right\rangle_{\mathcal{H}}\\
&&\qquad\qquad\qquad\qquad\qquad\qquad \times \left[K_{ij}({\beta})\phi _{ij}(\beta) - K_{ij}({\bar{\beta}})\phi _{ij}(\bar{\beta}) \right]\\
&&+ \text{terms of smaller rate}\\
&=& D_{n21} (\beta)+ \text{terms of smaller rate}.
\end{eqnarray*}
Recall that by construction,
$$
\mathbb{E}[Y_i(t) \mid X_i] = r_i(t;\bar{\beta}) + r_n \delta(X_i,t),
$$
so that
$$
[Y_i(t)  - r_i(t;\bar{\beta}) ]f_{\bar{\beta}}(X_i^\prime \bar{\beta}) = [\epsilon_i(t)+ r_n \delta(X_i,t)]f_{\bar{\beta}}(X_i^\prime \bar{\beta}),
$$
with $\mathbb{E}[\epsilon_i(t)\mid X_i ]=0,$ $\forall t\in [0,1].$ Thus
\begin{multline*}
D_{n21}(\beta) = \dfrac{1}{n(n-1)h}\sum_{i\neq j}\!\!\left\langle  \epsilon_i(t),\; \epsilon_j(t)\right\rangle_{\mathcal{H}}f_{\bar{\beta}}(X_i^\prime \bar{\beta}) f_{\bar{\beta}}(X_j^\prime \bar{\beta})\left[K_{ij}({\beta})\phi _{ij}(\beta) - K_{ij}({\bar{\beta}})\phi _{ij}(\bar{\beta}) \right]\\
+ \dfrac{2r_n}{n(n-1)h}
\sum_{i\neq j}\!\left\langle  \epsilon_i(t),\; \delta(X_j,t) \right\rangle_{\mathcal{H}}f_{\bar{\beta}}(X_i^\prime \bar{\beta}) f_{\bar{\beta}}(X_j^\prime \bar{\beta})\left[K_{ij}({\beta})\phi _{ij}(\beta) - K_{ij}({\bar{\beta}})\phi _{ij}(\bar{\beta}) \right]\\
+ \dfrac{r^2_n}{n(n-1)h}
\sum_{i\neq j}\!\left\langle  \delta(X_i,t),\; \delta(X_j,t) \right\rangle_{\mathcal{H}}f_{\bar{\beta}}(X_i^\prime \bar{\beta}) f_{\bar{\beta}}(X_j^\prime \bar{\beta})\left[K_{ij}({\beta})\phi _{ij}(\beta) - K_{ij}({\bar{\beta}})\phi _{ij}(\bar{\beta}) \right]
\\
= D_{n211} (\beta)+ 2r_n D_{n212} (\beta) + r_n^2 D_{n213} (\beta).
\end{multline*}
The term $D_{n211} (\cdot)$ is a degenerate $U-$process of order 2, indexed by $\beta$.
Consider  the family of functions
\begin{equation}\label{fn_n}
\mathcal{F}_n =\{h(\cdot,\cdot;\beta):  \beta\in\mathcal{B}_n \}
\end{equation}
with
$$
h((x_1,\epsilon_1), (x_2,\epsilon_2);\beta) = \langle \epsilon_1(\cdot),\;  \epsilon_2 (\cdot)\rangle_{\mathcal{H}}  f_{\bar{\beta}}(x_1^\prime \bar{\beta}) f_{\bar{\beta}}(x_2^\prime \bar{\beta})[K_{12}(\beta)\phi_{12}(\beta)
-K_{12}(\bar{\beta})\phi_{12}(\bar{\beta} )].
$$
It is quite easy to see that $\mathcal{F}_n$ is a VC class, or Euclidean in the terminology of \cite{Sherman1994}, for a squared integrable envelope $H(\cdot),$ with some $A$ and $V$ independent of $n$. (Recall that the $\delta-$covering number of an Euclidean class of function is bounded by $A\delta^{-V}$.)
Since $\mathbb{E} (\|\epsilon_1 (\cdot)\|^2_{\mathcal{H}}  \mid X_1)$ and $f_{\bar{\beta}}(X_1^\prime \bar{\beta})$ are bounded, and the kernel $K$ is bounded, by Lemma \ref{mon_tric} deduce that
$$
\mathbb{E} \left[\sup_{\beta\in\mathcal{B}_n} h (\cdot,\cdot; \beta)^2\right]\leq C h^{1/2}b_n
$$
for some constant $C>0$ independent on $n$ and $\bar{\beta}$. See Lemma \ref{mon_tric} below.
Applying the Main Corollary of \cite{Sherman1994} with $k=2,$ $p=1$, deduce that\footnote[2]{Let us point out that the rate could be improved if one tracks the dependence of the constants appearing in Sherman's result on the $\delta-$covering number of $\mathcal{F}_n.$ This covering number decreases with $n$ as the parameter set $\mathcal{B}_n$ shrinks to $\bar\beta$. For our purposes we do not need this refinement.}
$$
\sup_{\beta} |h D_{n211}(\beta)| \leq \frac{C'}{n} (b_n h^{1/2} )^{\alpha/2} = n^{-1}h^{1/2} \times O(n^{-\alpha/4} h^{\alpha/4-1/2})
$$
for  $0<\alpha<1$.
Since and $\alpha$ could be arbitrarily
close to 1 and $b_n$ could be any sequence such that $b_nn^{1/2} \ln^{-1} n\rightarrow \infty$ and $nh^{3/2}\rightarrow 0,$ deduce that
$$
D_{n211}(\beta_n) =o_{\mathbb{P}}(n^{-1}h^{-1/2}).
$$
For the uniform rate of the centered $U-$process $D_{n212}(\cdot),$ use the Hoeffding decomposition. The degenerate $U-$process of order 1 in this decomposition could be handled with the arguments used for $D_{n211}(\cdot)$ and shown to be of uniform rate $o_{\mathbb{P}}(n^{-1/2})$. The degenerate $U-$process of order 2 in the decomposition is
$$
D_{n212,1}(\beta) = \dfrac{1}{n}
\sum_{i\neq j}\!\left\langle  \epsilon_i(t),\; \gamma_i(\beta,t;h) \right\rangle_{\mathcal{H}}f_{\bar{\beta}}(X_i^\prime \bar{\beta})
$$
where
$$
\gamma_i(\beta,t;h) = \mathbb{E}\left\{\delta(X_j,t) f_{\bar{\beta}}(X_j^\prime \bar{\beta})h^{-1} \left[K_{ij}({\beta})\phi _{ij}(\beta) - K_{ij}({\bar{\beta}})\phi _{ij}(\bar{\beta}) \right]\mid X_i \right\}.
$$
Since  $f_{\bar{\beta}}$ and $\delta(X,\cdot)$ are supposed bounded, arguments as used for Lemma \ref{mon_tric} allow to show that $
\mathbb{E}[\sup_\beta \gamma^2_i(\beta,t)] = o(1).$ Deduce that $D_{n212,1}(\beta_n) = o_{\mathbb{P}}(n^{-1/2}).$ Gathering facts, $r_n D_{n212} (\beta_n)$ is negligible compared to $r_n^2.$ By the same arguments, $D_{n213} (\beta_n) = o_{\mathbb{P}}(1)$ so that we can conclude that
$
D_{n2} (\beta_n) = o_{\mathbb{P}}(n^{-1}h ^{-1/2} + r_n^2).
$
\end{proof}

\quad

\begin{proof}[Proof of Proposition \ref{sam_1mai}]
Let us consider the simplified notation from equation (\ref{simpli}) and further simplify in the case $\beta=\beta_0$ and write
\begin{equation}\label{simpli2}
L_{ij} = L_{ij}(\beta_0,g), \quad K_{ij} = K_{ij}(\beta_0,h),\quad \text{and} \quad
\phi_{ij} = \phi(W_i(\beta_0)- W_j(\beta_0)).
\end{equation}
Notice that
\begin{eqnarray*}
I_{n}^{\{l\}}\left(\beta_{0}\right) & = & \dfrac{1}{n\left(n-1\right)h}\sum_{1\leq i\neq j\leq n}\left\{ \left\langle \left(r_{i}-\tilde{r}_{i}\right)\left(\cdot;\,\beta_{0}\right),\,\left(r_{j}-\tilde{r}_{j}\right)\left(\cdot;\,\beta_{0}\right)\right\rangle _{L^{2}}\right.\\
 &  & \hphantom{\dfrac{1}{n\left(n-1\right)h}\sum_{1\leq i\neq j\leq n}\quad}+\left.\left\langle \epsilon_{i}\left(\cdot\right),\,\epsilon_{j}\left(\cdot\right)\right\rangle _{L^{2}}\right.\\
 &  & \hphantom{\dfrac{1}{n\left(n-1\right)h}\sum_{1\leq i\neq j\leq n}\quad}+\left.\left\langle \tilde{\epsilon}_{i}\left(\cdot\right),\,\tilde{\epsilon}_{j}\left(\cdot\right)\right\rangle _{L^{2}}\right.\\
 &  & \hphantom{\dfrac{1}{n\left(n-1\right)h}\sum_{1\leq i\neq j\leq n}\quad}+\left.2\left\langle \epsilon_{i}\left(\cdot\right),\,\left(r_{j}-\tilde{r}_{j}\right)\left(\cdot;\,\beta_{0}\right)\right\rangle _{L^{2}}\right.\\
 &  & \hphantom{\dfrac{1}{n\left(n-1\right)h}\sum_{1\leq i\neq j\leq n}\quad}-\left.2\left\langle \tilde{\epsilon}_{i}\left(\cdot\right),\,\left(r_{j}-\tilde{r}_{j}\right)\left(\cdot;\,\beta_{0}\right)\right\rangle _{L^{2}}\right.\\
 &  & \hphantom{\dfrac{1}{n\left(n-1\right)h}\sum_{1\leq i\neq j\leq n}\quad}-\left.2\left\langle \epsilon_{i}\left(\cdot\right),\,\tilde{\epsilon}_{j}\left(\cdot\right)\right\rangle _{L^{2}}\right\} \hat{f}_{\beta_{0},i}\hat{f}_{\beta_{0},j}K_{ij}\phi_{ij}\\
 & = & I_{1}\left(\beta_{0}\right)+I_{2}\left(\beta_{0}\right)+I_{3}\left(\beta_{0}\right)+2I_{4}\left(\beta_{0}\right)-2I_{5}\left(\beta_{0}\right)-2I_{6}\left(\beta_{0}\right)
\end{eqnarray*}
with
$$
\hat{f}_{\beta,i}=\dfrac{1}{\left(n-1\right)g}\sum_{k\neq i}L_{ik}\left(\beta\right)
,\quad r_{i}\left(t;\,\beta\right)=\mathbb{P}\left[Y_{i}\leq\Phi^{-1}\left(t\right)\mid X_{i}^{\prime}\beta\right],
$$
$$\tilde{r}_{i}\left(t;\,\beta\right)=\dfrac{1}{\left(n-1\right)g\hat{f}_{\beta,i}}\sum_{k\neq i}r_{k}\left(t;\,\beta\right)L_{ik}\left(\beta\right)$$
and $\tilde{\epsilon}_{i}\left(\cdot\right)$ is defined as $\tilde{r}_{i}\left(t;\,\beta\right)$
by replacing $r_{i}\left(t;\,\beta\right)$ by $\epsilon_{i}\left(\cdot\right)$.
This decomposition of $I_{n}^{\{l\}}\left(\beta_{0}\right) $ is given by the identity
\[
\widehat{U_{i}\omega\left(Z_{i}\right)}\left(\cdot;\,\beta_{0}\right)=\left[r_{i}\left(\cdot;\,\beta_{0}\right)-\tilde{r}_{i}\left(\cdot;\,\beta_{0}\right)+\epsilon_{i}\left(\cdot\right)-\tilde{\epsilon}_{i}\left(\cdot\right)\right]\hat{f}_{\beta_{0},i}.
\]
The terms $I_{1}\left(\beta_{0}\right)$ and $I_{3}\left(\beta_{0}\right)$
are treated in Lemmas \ref{prop:I1} and \ref{prop:I3} in Section \ref{tech_lemm}.
For $I_{2}\left(\beta_{0}\right)$, let
us introduce
\[
\omega_{n}^{2}\left(\beta\right)=\dfrac{2}{n\left(n-1\right)h}\sum_{i=1}^{n}\sum_{j\neq i}\intop\intop\Gamma^2\left(s,t\right)ds\, dt\,\hat{f}_{\beta,i}^{2}\hat{f}_{\beta,j}^{2}K_{ij}^{2}\left(\beta\right)\phi_{ij}^{2}\left(\beta\right).
\]
Proposition \ref{prop:I2} below ensures that $nh^{1/2}\omega_{n}^{-1}\left(\beta_{0}\right)I_{2}\left(\beta_{0}\right)\to\mathcal{N}\left(0,1\right)$ in law.
The terms $I_{4}\left(\beta_{0}\right)$, $I_{5}\left(\beta_{0}\right)$
and $I_{6}\left(\beta_{0}\right)$
can be shown to be negligible in a similar way as $I_{1}\left(\beta_{0}\right)$ and
$I_{3}\left(\beta_{0}\right)$. Lemma \ref{lem:VarLimit} shows that
$\omega_{n}^{2}\left(\beta_{0}\right)\rightarrow\omega^{2}\left(\beta_{0}\right)$ in probability with $\omega^{2}\left(\beta_{0}\right)>0$
and thus $I_{j}\left(\beta_{0}\right)/\omega_{n}\left(\beta_{0}\right)$
is of the same order as $I_{j}\left(\beta_{0}\right)$ for $j\in\left\{ 1,3,4,5,6\right\} $. Finally, it is easy to check that $\omega_{n}\left(\beta_0\right) - \hat \omega_{n}^{\{l\}}\left(\beta_0\right)= o_{\mathbb{P}}(1).$
Then the result of the proposition follows.
\end{proof}

\quad

\begin{proposition}
\label{prop:I2}Under the conditions of Proposition \ref{sam_1mai}, $$
nh^{1/2}\omega_{n}^{-1}\left(\beta_{0}\right)
I_{2}\left(\beta_{0}\right)\to\mathcal{N}\left(0,1\right)\quad \text{in law}.$$
\end{proposition}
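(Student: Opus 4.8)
The plan is to recognise $nh^{1/2}I_2(\beta_0)$ as, up to asymptotically negligible remainders, a completely degenerate second order $U$-statistic with an $n$-dependent kernel, and then to conclude by a central limit theorem for such objects together with the variance normalisation provided by $\omega_n(\beta_0)$. First I would replace the estimated densities $\hat f_{\beta_0,i}$ by their population counterparts $f_{\beta_0}(Z_i(\beta_0))$. Writing $\hat f_{\beta_0,i}=f_{\beta_0}(Z_i(\beta_0))+R_{1,ni}$ and expanding the product $\hat f_{\beta_0,i}\hat f_{\beta_0,j}$, the uniform rate $\sup_i|R_{1,ni}|=O_{\mathbb P}(g+n^{-1/2}g^{-1/2}\ln^{1/2}n)$ from Lemma \ref{deltas2}, together with the moment bounds of Lemma \ref{mon_tric} and the bandwidth conditions of Assumption \ref{ass_app}, shows that the contribution of the cross and remainder terms to $nh^{1/2}I_2(\beta_0)$ is $o_{\mathbb P}(1)$. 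This reduces the analysis to
\[
\tilde I_2(\beta_0)=\frac{2}{n(n-1)}\sum_{i<j}H_n(\zeta_i,\zeta_j),\qquad H_n(\zeta_i,\zeta_j)=\frac{1}{h}\langle\epsilon_i,\epsilon_j\rangle_{L^2}f_{\beta_0}(Z_i)f_{\beta_0}(Z_j)K_{ij}\phi_{ij},
\]
with $\zeta_i=(\epsilon_i(\cdot),X_i)$.

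Next I would check degeneracy and compute the relevant moments. Since, under (\ref{law_sim}), $\mathbb E[\epsilon_i(t)\mid X_i]=0$ for all $t$, and $\epsilon_j$ enters $H_n$ linearly through $\langle\epsilon_i,\epsilon_j\rangle_{L^2}=\int_0^1\epsilon_i(t)\epsilon_j(t)\,dt$ while $f_{\beta_0}(Z_j)K_{ij}\phi_{ij}$ is a function of $X_j$ only, one obtains $\mathbb E[H_n(\zeta_1,\zeta_2)\mid\zeta_1]=0$; hence $H_n$ is completely degenerate and $\tilde I_2(\beta_0)$ is centred. A change of variables $Z_i-Z_j=hu$ then gives $\mathbb E[H_n^2]=O(h^{-1})$ and $\mathbb E[H_n^4]=O(h^{-3})$, while the convolution structure of $K$ yields $\mathbb E[G_n^2]=O(h^{-1})$ for $G_n(\zeta_1,\zeta_2)=\mathbb E[H_n(\zeta_1,\zeta_3)H_n(\zeta_2,\zeta_3)\mid\zeta_1,\zeta_2]$. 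Matching the leading term $2h\,\mathbb E[H_n^2]$ with $\omega_n^2(\beta_0)$ and invoking Lemma \ref{lem:VarLimit}, so that $\omega_n^2(\beta_0)\to\omega^2(\beta_0)>0$, I would identify $\omega_n(\beta_0)$ as the correct scaling, i.e. $\Var(nh^{1/2}\tilde I_2(\beta_0))=\omega_n^2(\beta_0)(1+o_{\mathbb P}(1))$.

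Finally I would apply a central limit theorem for degenerate $U$-statistics with kernels depending on $n$ (in the spirit of Hall, 1984, and de Jong, 1987): asymptotic normality holds as soon as
\[
\frac{\mathbb E[G_n^2(\zeta_1,\zeta_2)]+n^{-1}\mathbb E[H_n^4(\zeta_1,\zeta_2)]}{\{\mathbb E[H_n^2(\zeta_1,\zeta_2)]\}^2}\longrightarrow 0.
\]
With the orders above this ratio equals $O(h)+O((nh)^{-1})$, which tends to zero because $h\to 0$ and $nh^2\to\infty$ (hence $nh\to\infty$) under Assumption \ref{ass_app}. Combining the CLT for $\tilde I_2(\beta_0)$ with the density-replacement step and Slutsky's lemma yields $nh^{1/2}\omega_n^{-1}(\beta_0)I_2(\beta_0)\to\mathcal N(0,1)$.

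The main obstacle is the first step: because each $\hat f_{\beta_0,i}$ is itself a kernel average over the remaining observations, the substitution generates higher order $U$-statistics that must be controlled uniformly, and this is where the maximal inequality of \cite{Sherman1994} and the uniform bounds of Lemma \ref{mon_tric} do the real work. Verifying the order $\mathbb E[G_n^2]=O(h^{-1})$, which governs the off-diagonal dependence of the degenerate kernel and ultimately forces the Lindeberg-type condition, is the other delicate point; the infinite-dimensionality of $\mathcal H=L^2[0,1]$ is harmless here since $H_n$ is real-valued and all moments are bounded through $\mathbb E(\|\epsilon\|_{\mathcal H}^2\mid X)$.
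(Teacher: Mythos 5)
Your route is viable and is genuinely different from the paper's. The paper never replaces the estimated densities $\hat f_{\beta_0,i}$ by $f_{\beta_0}(Z_i(\beta_0))$: it exploits the fact that the $\hat f_{\beta_0,i}$ depend on the covariates only, takes $\mathcal{F}_{n,m}=\sigma(X_1,\dots,X_n,Y_1,\dots,Y_m)$ (so that \emph{all} the random weights are measurable with respect to every $\mathcal{F}_{n,m}$), and writes $nh^{1/2}\omega_n^{-1}(\beta_0)I_2(\beta_0)=S_{n,n}$ for a martingale array with increments $G_{n,i}$ linear in $\epsilon_i$. It then applies Corollary 3.1 of \cite{Hall1980}, verifying by hand (Lemma \ref{lem:CondVar}) that the normalized conditional variance tends to $1$ and that a conditional Lindeberg condition holds, with the limit $\omega^2(\beta_0)>0$ identified in Lemma \ref{lem:VarLimit}. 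Your approach instead de-randomizes the weights first and then invokes an off-the-shelf CLT for degenerate $U$-statistics with $n$-dependent kernels (Hall 1984 / de Jong type; note these references are not in the paper's bibliography), whose ratio condition you verify correctly: the orders $\mathbb{E}[H_n^2]\asymp h^{-1}$ (the lower bound coming from $\omega^2(\beta_0)>0$), $\mathbb{E}[H_n^4]=O(h^{-3})$ and $\mathbb{E}[G_n^2]=O(h^{-1})$ are right, and the ratio is $O(h)+O((nh)^{-1})\to 0$ under Assumption \ref{ass_app}-(g); moreover, since here $\epsilon_i(t)$ is an indicator minus a conditional probability, $\|\epsilon_i\|_{L^2}\leq 1$ a.s., so all moment requirements are trivially met. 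What the paper's route buys is precisely the avoidance of your first step; what your route buys is a clean, reusable normality criterion in place of the hand-verified conditional variance and Lindeberg computations.

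There is, however, a soft spot in your replacement step as argued. The uniform rate $\sup_i|R_{1,ni}|=O_{\mathbb{P}}(\rho_n)$ with $\rho_n=g+n^{-1/2}g^{-1/2}\ln^{1/2}n$ from Lemma \ref{deltas2} does \emph{not} by itself make the cross terms negligible: a direct sup-norm bound on, say, $n^{-1}(n-1)^{-1}h^{-1}\sum_{i\neq j}\langle\epsilon_i,\epsilon_j\rangle_{L^2}f_{\beta_0}(Z_i)R_{1,nj}K_{ij}\phi_{ij}$ only yields $O_{\mathbb{P}}(\rho_n)$, and $nh^{1/2}\rho_n\to\infty$ for the bandwidths permitted by Assumption \ref{ass_app}-(g) (e.g.\ $g\sim n^{-\gamma}$, $\gamma\in(1/5,1/4)$, $h\sim n^{-2/9}$ gives $nh^{1/2}g\to\infty$). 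The repair is to exploit degeneracy rather than magnitude: the $R_{1,nj}$ are functions of the covariates alone, so conditionally on $X_1,\dots,X_n$ each cross term is a centered quadratic form in the $\epsilon_i$'s (since $\mathbb{E}[\epsilon_i(t)\mid X_i]=0$ and the $\epsilon_i$ are independent across $i$), with conditional second moment bounded by $C\sup_j R_{1,nj}^2\cdot n^{-2}h^{-2}\sum_{i\neq j}K_{ij}^2=O_{\mathbb{P}}(\rho_n^2\,n^{-2}h^{-1})$; hence the cross terms are $O_{\mathbb{P}}(\rho_n\, n^{-1}h^{-1/2})=o_{\mathbb{P}}(n^{-1}h^{-1/2})$, which is what you need. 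This is a conditional second-moment computation, not a uniformity argument, so the maximal inequality of \cite{Sherman1994} that you invoke is beside the point here: $\beta_0$ is fixed, and Sherman's inequality is needed in the paper only for the supremum over $\beta$ in Proposition \ref{as_equiv}. With the replacement step repaired along these lines, your proof goes through.
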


\medskip

\begin{proof}
$\left\{ S_{n,m},\,\mathcal{F}_{n,m},\,1\leq m\leq n,\, n\geq1\right\} $
is a martingale array with $S_{n,1}=0$ and
\[
S_{n,m}\left(\beta_{0}\right)=\sum_{i=1}^{m}G_{n,i}\left(\beta_{0}\right)
\]
with
\[
G_{n,i}\left(\beta_{0}\right)=\dfrac{2h^{p/2}}{\omega_{n}\left(n-1\right)h}\left\langle \epsilon_{i}\left(\cdot\right)\hat{f}_{\beta_{0},i},\,\sum_{j=1}^{i-1}\epsilon_{j}\left(\cdot\right)\hat{f}_{\beta_{0},j}K_{ij}\phi_{ij}\right\rangle _{L^{2}}
\]
and $\mathcal{F}_{n,m}$ is the $\sigma$-field generated by $\left\{ X_{1},\dots,\, X_{n},\, Y_{1},\dots,\, Y_{m}\right\} $.
Thus $$nh^{1/2}\omega_{n}^{-1}\left(\beta_{0}\right)I_{2}\left(\beta_{0}\right)=S_{n,n}\left(\beta_{0}\right).$$
Next, define
\begin{eqnarray*}
V_{n}^{2}\left(\beta_{0}\right) & = & \sum_{i=2}^{n}\mathbb{E}\left[G_{n,i}^{2}\left(\beta_{0}\right)\mid\mathcal{F}_{n,i-1}\right]\\
 & = & \dfrac{4}{\omega_{n}^{2}\left(n-1\right)^{2}h}\sum_{i=2}^{n}\int\int\Gamma\left(s,t\right)\hat{f}_{\beta_{0},i}^{2}\left(\sum_{j=1}^{i-1}\epsilon_{j}\left(s\right)\hat{f}_{\beta_{0},j}K_{ij}\phi_{ij}\right)\\
 &  & \qquad\qquad\qquad\qquad\qquad\qquad\times\left(\sum_{k=1}^{i-1}\epsilon_{k}\left(t\right)\hat{f}_{\beta_{0},k}K_{ik}\phi_{ik}\right)ds\, dt
\end{eqnarray*}
and decompose
\begin{multline}\label{an_bn}
V_{n}^{2}\left(\beta_{0}\right) = \dfrac{4}{\omega_{n}^{2}\left(n-1\right)^{2}h}\sum_{i=2}^{n}\sum_{j=1}^{i-1}\int\int\Gamma\left(s,t\right)\hat{f}_{\beta_{0},i}^{2}\epsilon_{j}\left(s\right)\epsilon_{j}\left(t\right)\hat{f}_{\beta_{0},j}^{2}K_{ij}^{2}\phi_{ij}^{2}ds\, dt\\
 +\dfrac{8}{\omega_{n}^{2}\left(n-1\right)^{2}h}\sum_{i=3}^{n}\sum_{j=2}^{i-1}\sum_{k=1}^{j-1}\int\int\Gamma\left(s,t\right)\hat{f}_{\beta_{0},i}^{2}\epsilon_{j}\left(s\right)\epsilon_{k}\left(t\right)\hat{f}_{\beta_{0},j}\hat{f}_{\beta_{0},k}K_{ij}^{2}\phi_{ij}^{2}ds\, dt\\
  =  A_{n}\left(\beta_{0}\right)+B_{n}\left(\beta_{0}\right).
\end{multline}
From Lemma \ref{lem:CondVar}, we have that the martingale array satisfies
Corollary 3.1 of \cite{Hall1980} and the result
follows.\end{proof}

\quad

\setcounter{equation}{0}

\section{Appendix 2: technical lemmas}\label{tech_lemm}

\qquad

In the following results the kernels $L$ and $K$ are supposed to satisfy the conditions of Assumption \ref{ass_app}-(f).

\begin{lem}\label{Deltas}
Assume that $\mathbb{E}[\exp(a\|X\|)]<\infty$ for some $a>0.$ Consider that $g\rightarrow 0$ and $ng^{4/3} /\ln n \rightarrow \infty.$ For any $t\in[0,1]$ let $Y_k(t),$ $1\leq k \leq n,$ be an i.i.d. random variables like in the proof of Proposition \ref{as_equiv} such that $\mathbb{E}[\sup_t |Y_k(t) |^a ]<\infty$ for some $a>8.$ Moreover, assume that the maps $v\mapsto \mathbb{E}[|Y_k (t) | \mid X^\prime \bar\beta = v ]f_{\bar\beta}(v),$ $v\in\mathbb{R},$ $t\in[0,1],$ are uniformly Lipschitz (the Lipschitz constant does not depend on $t$). Then
$$
 \max_{1\leq i\leq n}\sup_{t\in[0,1] } \sup_{\beta\in\mathcal{B}_n } \left| \frac {1}{n-1} \sum_{k\neq i} Y_k (t)\frac{1}{g}\left[L_{ik} (\beta) - L_{ik} (\bar \beta)\right] \right| = O_{\mathbb{P}}(n^{-1/2}g^{-1/2} \ln^{1/2}n + b_n ).
$$
Moreover,
$$
 \max_{1\leq i\leq n}\sup_{t\in[0,1] } \sup_{\beta\in\mathcal{B}_n } \left| \frac{1}{n-1}\sum\limits _{k\neq i} \{ Y_k (t) - \mathbb{E}[Y_k (t)\mid X_k^\prime \bar\beta] \}\frac{1}{g}L_{ik}(\bar{\beta}) \right| = O_{\mathbb{P}}(n^{-1/2}g^{-1/2} \ln^{1/2}n  ).
 $$
\end{lem}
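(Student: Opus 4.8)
The plan is to prove both bounds with the same machinery: reduce to a high-probability event, truncate the responses, and then combine a Bernstein-type exponential inequality with union/covering bounds over the indices $i$, the argument $t$, and (for the first bound) the parameter $\beta$. Throughout I work on the event $\mathcal{E}_n\cap\{\beta_n\in\mathcal{B}_n\}$ from the proof of Proposition \ref{as_equiv}, where $\mathbb{P}(\mathcal{E}_n^c)$ is super-polynomially small, so that for $\beta\in\mathcal{B}_n$ one has $\max_i\sup_\beta|Z_i(\beta)-Z_i(\bar\beta)|=\max_i\sup_\beta|X_i'(\beta-\bar\beta)|\leq b_n$ and $\max_i\|X_i\|=O(\ln n)$. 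The first reduction is truncation: since $\mathbb{E}[\sup_t|Y_k(t)|^a]<\infty$ with $a>8$, replacing $Y_k(t)$ by $Y_k(t)\mathbf{1}\{\sup_s|Y_k(s)|\leq\tau_n\}$ with $n^{1/a}\ll\tau_n\ll(ng)^{1/2}\ln^{-1/2}n$ leaves a remainder that is negligible by Markov's inequality and a union bound over $k$; such a $\tau_n$ exists precisely because $a>8$ and $ng^{4/3}/\ln n\to\infty$ force $(ng)^{1/2}\gg n^{1/8}\gg n^{1/a}$ up to logarithms.

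I treat the second bound first, as it is the cleaner one. Fix $i$ and condition on $X_i$. Because $\mathbb{E}[\,Y_k(t)-\mathbb{E}[Y_k(t)\mid X_k'\bar\beta]\mid X_k'\bar\beta\,]=0$ and $L_{ik}(\bar\beta)$ is a function of $X_k'\bar\beta$ given $X_i$, the summands $\{Y_k(t)-\mathbb{E}[Y_k(t)\mid X_k'\bar\beta]\}g^{-1}L_{ik}(\bar\beta)$ are i.i.d.\ with mean zero given $X_i$, and the usual kernel computation (change of variables to the index $X_k'\bar\beta$, whose density $f_{\bar\beta}$ is bounded) gives a conditional variance of order $g^{-1}$. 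Bernstein's inequality applied to the truncated average yields, for $\lambda=\Lambda n^{-1/2}g^{-1/2}\ln^{1/2}n$, a tail bounded by $2\exp(-c\Lambda^2\ln n)$ once $\tau_n(ng)^{-1/2}\ln^{1/2}n=O(1)$. A union bound over $i\in\{1,\dots,n\}$ and over a polynomial grid in $t$ — whose oscillation between grid points is controlled because $t\mapsto Y_k(t)$ is either constant or the monotone indicator $\mathbf{1}\{Y_k\leq\Phi^{-1}(t)\}$, a VC class — costs only powers of $n$, absorbed by taking $\Lambda$ large. This gives $O_{\mathbb{P}}(n^{-1/2}g^{-1/2}\ln^{1/2}n)$; the supremum over $\beta\in\mathcal{B}_n$ is vacuous here since the summand involves only $\bar\beta$.

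For the first bound I split $\frac{1}{n-1}\sum_{k\neq i}Y_k(t)g^{-1}L_{ik}(\beta)$ into its conditional mean given $X_i$ and the centered fluctuation, and do the same at $\bar\beta$. The two fluctuations are centered kernel averages, each $O_{\mathbb{P}}(n^{-1/2}g^{-1/2}\ln^{1/2}n)$ uniformly in $(i,t,\beta)$ by the Bernstein-plus-covering argument above (the extra supremum over the shrinking ball $\mathcal{B}_n$ adds only a logarithmic factor, since $\beta\mapsto L_{ik}(\beta)$ is Lipschitz so the class is Euclidean), hence so is their difference. It then remains to control the difference of the two conditional means, the deterministic bias. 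Writing $L_{ik}(\beta)-L_{ik}(\bar\beta)$ and conditioning on $X_k'\bar\beta$, a first-order Taylor expansion in $\beta$ around $\bar\beta$ together with $\int L'=0$, $\int uL'(u)\,du=-1$ and the uniform Lipschitz property of $v\mapsto\mathbb{E}[|Y(t)|\mid X'\bar\beta=v]f_{\bar\beta}(v)$ reduces the apparent $O(b_n/g)$ size to $O(b_n)$; the cross-term carrying $X_k'(\beta-\bar\beta)$ and the second-order remainder (bounded using the boundedness of $L''$ inherited from the Lipschitz continuity of $L'$) are of smaller order under the stated bandwidth conditions.

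The main obstacle is exactly this last bias estimate. The naive bound, obtained by passing absolute values through the Taylor remainder, produces a spurious factor $g^{-1}$ and yields only the useless $O(b_n/g)$. Recovering the correct $O(b_n)$ rate forces one to keep the sign of $L'$ and to use the cancellation $\int L'=0$ after changing variables to the scalar index; this is delicate because $X$ is not assumed to have a joint density, so the reduction must pass through the conditional law of $X_k$ given $X_k'\bar\beta$ and the density $f_{\bar\beta}$ of the index alone, with the Lipschitz hypothesis supplying precisely the smoothness needed to gain one power of $g$. By contrast, the stochastic pieces and all the uniformity over $i$, $t$ and $\beta$ are routine once the truncation level has been calibrated to $a>8$ and $ng^{4/3}/\ln n\to\infty$.
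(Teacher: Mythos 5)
Your treatment of the stochastic part is sound and is essentially a discretized substitute for what the paper does: the paper applies the moment bound of Theorem 3.1 of \cite{Vaart2011} to the class $\{Y(t)L((X^\prime\beta-w)g^{-1})\}$ with envelope $\sup_t|Y(t)|\sup_w L(w)$ (uniformity in $i$ being absorbed by the free location parameter $w$, where you use a union bound over $i$ instead), and your calibration of the truncation level against $a>8$ and $ng^{4/3}/\ln n\to\infty$ matches the role these conditions play there. The genuine gap is in your bias step, and it is twofold. First, the leading term of your Taylor expansion is $g^{-2}\,\mathbb{E}\bigl[\,Y(t)\,(X_i-X)^\prime(\beta-\bar\beta)\,L^\prime\bigl((X_i-X)^\prime\bar\beta/g\bigr)\mid X_i\bigr]$, and the factor $(X_i-X)^\prime(\beta-\bar\beta)$ is \emph{not} a function of the index $X^\prime\bar\beta$. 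To exploit the cancellation $\int L^\prime=0$ after changing variables to the index, you need $v\mapsto\mathbb{E}[Y(t)X\mid X^\prime\bar\beta=v]f_{\bar\beta}(v)$ to be Lipschitz componentwise --- an assumption the lemma does not make. The hypothesis actually available concerns $v\mapsto\mathbb{E}[|Y(t)|\mid X^\prime\bar\beta=v]f_{\bar\beta}(v)$, with the absolute value on $Y$; but once you take absolute values to invoke it you also put absolute values on $L^\prime$, $\int|L^\prime|\neq0$, and you are back at the useless $O(b_n/g)$. Second, your remainder estimate fails quantitatively: Assumption \ref{ass_app}-(f) gives $L^\prime$ Lipschitz, hence $L^{\prime\prime}$ bounded a.e., but not $|L^{\prime\prime}|$ integrable, so without localization the second-order term is of size $b_n^2g^{-3}$; with $b_n$ only slightly larger than $n^{-1/2}\ln n$ and $g=n^{-\gamma}$, $\gamma\in(1/5,1/4)$, one has $b_n^2g^{-3}\asymp n^{-1+3\gamma}\ln^2 n\gg b_n$ because $\gamma>1/6$, so the remainder is not of smaller order as you claim.

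The paper's own proof (Lemma \ref{deltas4}, resting on Lemma \ref{mon_tric}) avoids Taylor expansion entirely and sidesteps both problems. One decomposes $L$ into monotone pieces and brackets $\sup_{\beta\in\mathcal{B}_n}|L_{ik}(\beta)-L_{ik}(\bar\beta)|$ by the difference of a monotone piece evaluated at the index shifted by $\pm2b_n$, plus a correction of order $b_n^2g^{-2}$ supported on $\{|Z_{ik}(\bar\beta)|\leq2b_n\}$ that is controlled using the assumption that $|L^\prime(v)|/v$ is bounded near the origin. Two changes of variables then transfer the $\pm2b_n$ shift from the kernel argument onto the Lipschitz map $v\mapsto\mathbb{E}[|Y(t)|\mid X^\prime\bar\beta=v]f_{\bar\beta}(v)$, which is exactly what gains the extra power of $g$ and yields $O_{\mathbb{P}}(b_n)$ --- using only bounded variation of $L$ and $\int|L^\prime|<\infty$, and using the Lipschitz hypothesis in precisely the form stated (absolute value included, signs discarded). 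You correctly identified where the difficulty sits, but the cancellation has to come from a translation difference of the conditional-mean-times-density map, not from the signed derivative of the kernel; to repair your argument you would either have to add the unassumed smoothness of the vector-weighted conditional mean together with integrability of $L^{\prime\prime}$, or replace the expansion by the bracketing argument above.
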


\begin{proof}[Proof of Lemma \ref{Deltas}] Recall that
$Y_i(t) \equiv Y_i$ (in the case of SIM for mean regression) or $Y_i(t) =\mathbf{1}\{Y_i\leq \Phi^{-1}(t)\} $ (for the case of single-index assumption on the conditional law), and  $r_i(t;\bar{\beta}) = \mathbb{E}[Y_i(t) \mid Z(\bar{\beta})],$ $t\in[0,1].$
For any $t\in[0,1]$ we decompose
\begin{eqnarray*}
\frac {1}{ng} \sum_{k\neq i} Y_k (t) L_{ik} (\beta) \!\! &=& \!\! \frac{1}{ng} \sum_{k=1}^n
\left\{Y_k (t)L\left( (X_i-X_k)^\prime \beta /g \right) -\mathbb{E}\left[Y(t) L\left(
  (X_i-X)^\prime \beta /g \right)\mid X_i\right]\right\}\\ && + \mathbb{E}\left[Y(t)
  g^{-1} L\left( (X_i-X)^\prime \beta /g\right)\mid X_i\right] - n^{-1}g^{-1} L(0) Y_i (t)\\ &=&
\Sigma_{1ni}(\beta,t) + \Sigma_{2i} (\beta,t) - n^{-1}g^{-1} L(0) Y_i (t).
\end{eqnarray*}
The moment condition on $Y$ guarantees that $\max_{1\leq i\leq n} \sup_{t} |Y_i (t)| = o_{\mathbb{P}}(n^b)$ for some $0<b<1/8.$ This and the fact that
 $ng^{4/3} /\ln n \rightarrow \infty$ make that $\max_{1\leq i\leq n} \sup_{t} n^{-1}g^{-1}|Y_i (t)| = o_{\mathbb{P}}(n^{-1/2}g^{-1/2} \ln^{1/2}n ).$ On the other hand, by Lemma \ref{deltas4},
 $$
\max_{1\leq i\leq n}\sup_{t\in [0,1]} \sup_{\beta\in\mathcal{B}_n }\left|\Sigma_{2i} (\beta,t) - \Sigma_{2i} (\bar \beta,t)\right| = O_{\mathbb{P}}(b_n ).
 $$
It remains to uniformly bound $\Sigma_{ni}(\beta,t)$ and for this purpose  we use empirical process tools. Let us
  introduce some notation. Let $\mathcal{G}$ be a class of functions
  of the observations with envelope function $G$ and let
$$ J(\delta,\mathcal{G}, L^2 )=\sup_Q \int_0^\delta \sqrt{1+\ln N
  (\varepsilon \|G\|_{2},\mathcal{G}, L^2(Q) ) } d\varepsilon
,\qquad 0<\delta\leq 1,
$$ denote the uniform entropy integral, where the supremum is taken
  over all finitely discrete probability distributions $Q$ on the
  space of the observations, and $\| G \|_{2}$ denotes the norm of
  $G$ in $L^2(Q)$. Let $Z_1,\cdots,Z_n$ be a sample of independent
  observations and let
\begin{equation*}
\mathbb{G}_n g=\frac{1}{\sqrt{n}}\sum_{i=1}^n \gamma(Z_i) , \qquad \gamma \in\mathcal{G},
\end{equation*}
be the empirical process indexed by $\mathcal{G}$. If the covering
number $N (\varepsilon ,\mathcal{G}, L^2(Q) ) $ is of polynomial order
in $1/\varepsilon,$ there exists a constant $c>0$ such that
$J(\delta,\mathcal{G}, L^2 )\leq c \delta \sqrt{\ln(1/\delta)}$ for
$0<\delta<1/2.$ Now if $\mathbb{E}\gamma^2 < \delta^2 \mathbb{E}G^2$
for every $\gamma$ and some $0<\delta <1$, and
$\mathbb{E}G^{(4\upsilon-2)/(\upsilon-1)}<\infty$ for some
$\upsilon>1$, under mild additional measurability conditions that are satisfied in our context, Theorem
3.1 of \cite{Vaart2011} implies
\begin{equation}\label{vwww0}
 \sup_{\mathcal{G}}|\mathbb{G}_n \gamma| = J(\delta,\mathcal{G}, L^2
 )\left( 1 + \frac{ J(\delta^{1/\upsilon},\mathcal{G}, L^2 )}{\delta^2
   \sqrt{n} }
 \frac{\|G\|_{(4\upsilon-2)/(\upsilon-1)}^{2-1/\upsilon}}{\|G\|_{2}^{2-1/\upsilon}}
 \right)^{\upsilon/(2\upsilon-1)} \|G\|_2 O_{\mathbb{P}}(1),
\end{equation}
where $\|G\|_{2}^2 = \mathbb{E}G^2$ and the $ O_{\mathbb{P}}(1)$ term is
independent of $n.$ Note that the family $\mathcal{G}$ could change
with $n$, as soon as the envelope is the same for all $n$.  We apply
this result to the family of functions $\mathcal{G} = \{ \gamma (\cdot; \beta,w,t)- \gamma (\cdot; \bar\beta,w,t): t\in [0,1], \beta\in\mathcal{B},w \in\mathbb{R}\}$
where
$$
  \gamma (Y,X;\beta,w,t) = Y(t) L ((X^\prime \beta -w)g^{-1})
)
  $$
for a sequence $g$ that converges to
zero and the envelope $$G(Y,X)=\sup_{t\in[0,1]} |Y(t)| \sup_{w\in\mathbb{R}} L(w).$$ Its
entropy number is of polynomial order in $1/\varepsilon$,
  independently of $n$, as $L(\cdot)$ is of bounded variation and the families of indicator functions have polynomial complexity, see for
  instance \cite{Vaart1998}.  Now for any $\gamma \in \mathcal{G}$, $
  \mathbb{E} \gamma ^2 \leq C g \mathbb{E} G^2, $ for some
  constant $C$.  Let $\delta = g^{1/2},$ so that $ \mathbb{E} \gamma
  ^2 \leq C^\prime \delta^2 \mathbb{E} G^2, $ for some
  constant $C ^\prime$ and $\upsilon =3/2$, which corresponds to
  $\mathbb{E}G^{8}<\infty$ that is guaranteed by our assumptions.  Thus the
  bound in (\ref{vwww0})  yields
$$
 \sup_{\mathcal{G}}\left|\frac{1}{g \sqrt{n}} \; \mathbb{G}_n \gamma\right| =  \frac{ \ln^{1/2}(n)}{ \sqrt{n g}}
 \left[ 1 + n^{-1/2}g ^{-2/3}\ln^{1/2}(n) \right]^{3/4} O_{\mathbb{P}}(1) ,
$$
where the $ O_{\mathbb{P}}(1)$ term is independent of $n$.
Since $n g^{4/3}/\ln n  \rightarrow
\infty,$
$$
\max_{1\leq i\leq n}\sup_{t\in[0,1] } \sup_{\beta\in\mathcal{B}_n }|\Sigma_{1ni}(\beta,t) - \Sigma_{1ni}(\bar\beta,t) | = O_{\mathbb{P}}(n^{-1/2}g^{-1/2} \ln^{1/2}n).
$$
The second part of the statement is now obvious.
\end{proof}

%\begin{lem}\label{Deltas}
%Prove the rates for $|\Delta_{1,ni}| $ and $  |\Delta_{2,ni}|$. Use bounded variation of $L$ and \cite{Vaart2011}. Exponential moment condition on $\|X\|$. Bounded density $f_{\bar{\beta}}$ and $\mathbb{E}[\|Y\| \mid Z (\bar{\beta}) = \cdot f_{\bar{\beta}}(\cdot).$ Moment condition f order 8 for $Y$. Bandwidth conditions $ng^{4/3}/\ln n  \rightarrow \infty.$
%\end{lem}

\quad

\begin{lem}\label{deltas2}
Assume that the density $f_{\bar\beta}(\cdot)$  is Lipschitz. Then
$$
 \max_{1\leq i\leq n}\left| \frac {1}{n-1} \sum_{k\neq i} \frac{1}{g}L_{ik} (\bar \beta) - f_{\bar\beta } (X_i^\prime \bar \beta)\right| = O_{\mathbb{P}}(n^{-1/2}g^{-1/2} \ln^{1/2}n + g ).
$$
\end{lem}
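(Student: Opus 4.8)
The plan is to read the quantity as the uniform error of a leave\nobreakdash-one\nobreakdash-out kernel density estimator of $f_{\bar\beta}$, the density of $X^\prime\bar\beta$, evaluated at the random sample points $X_i^\prime\bar\beta$. Writing $\hat f_i = (n-1)^{-1}g^{-1}\sum_{k\neq i}L_{ik}(\bar\beta)$, I would first pass from the leave\nobreakdash-one\nobreakdash-out sum to the full sum, since $(n-1)^{-1}g^{-1}\sum_{k\neq i}L_{ik}(\bar\beta) = \{n/(n-1)\}\,n^{-1}g^{-1}\sum_{k=1}^n L_{ik}(\bar\beta) - (n-1)^{-1}g^{-1}L(0)$, and the correction $(n-1)^{-1}g^{-1}L(0)$ is $O(n^{-1}g^{-1})$, which is negligible relative to $n^{-1/2}g^{-1/2}\ln^{1/2}n$ because Assumption \ref{ass_app}-(g) yields $ng/\ln n\to\infty$. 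I would then decompose $\hat f_i - f_{\bar\beta}(X_i^\prime\bar\beta)$ into a stochastic part and a bias part, the latter being the conditional mean given $X_i$ minus $f_{\bar\beta}(X_i^\prime\bar\beta)$.

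For the bias, conditioning on $X_i$ and using the independence of the $X_k$, $k\neq i$, the conditional mean equals $\int L(w) f_{\bar\beta}(X_i^\prime\bar\beta - gw)\,dw$; subtracting $f_{\bar\beta}(X_i^\prime\bar\beta)$, using that $f_{\bar\beta}$ is Lipschitz together with $\int|w|\,|L(w)|\,dw<\infty$ (which follows from $\int|L|=1$ and $\int v^2|L(v)|\,dv<\infty$ in Assumption \ref{ass_app}-(f) by Cauchy--Schwarz), the bias is $O(g)$ uniformly in the evaluation point, hence uniformly in $i$. For the stochastic part I would proceed exactly as in the proof of Lemma \ref{Deltas}, but in the simpler situation where there is no weight $Y_k(t)$ and no $\beta$-indexed increment $L_{ik}(\beta)-L_{ik}(\bar\beta)$. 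The idea is to control $g^{-1}\{n^{-1}\sum_k L((v-X_k^\prime\bar\beta)/g) - \mathbb{E}[\,\cdot\,]\}$ as an empirical process indexed by the deterministic point $v\in\mathbb{R}$, and only afterwards specialize $v=X_i^\prime\bar\beta$ and take the maximum over $i$. The class $\mathcal{G}=\{x\mapsto L((v-x^\prime\bar\beta)/g):v\in\mathbb{R}\}$ is Euclidean with a constant envelope $\|L\|_\infty$, since $L$ is of bounded variation by Assumption \ref{ass_app}-(f), and $\mathbb{E}[L((v-X^\prime\bar\beta)/g)^2]\leq Cg$ uniformly in $v$ because $f_{\bar\beta}$ is bounded. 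Taking $\delta=g^{1/2}$ in the uniform entropy inequality (Theorem 3.1 of \cite{Vaart2011}), as in Lemma \ref{Deltas}, gives $\sup_v|\cdots| = O_{\mathbb{P}}(n^{-1/2}g^{-1/2}\ln^{1/2}n)$; here, because the envelope is bounded, the moment condition on the envelope is automatic and one needs only $ng/\ln n\to\infty$, rather than the stronger $ng^{4/3}/\ln n\to\infty$ required in Lemma \ref{Deltas}.

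Combining the bias rate $O(g)$ with the stochastic rate $O_{\mathbb{P}}(n^{-1/2}g^{-1/2}\ln^{1/2}n)$ and absorbing the $O(n^{-1}g^{-1})$ correction yields the announced bound. The only genuinely delicate point is that the evaluation point $v=X_i^\prime\bar\beta$ is random and correlated with the $i$-th summand; this is exactly why the argument bounds the empirical process uniformly over all \emph{deterministic} $v\in\mathbb{R}$ and relies on the leave\nobreakdash-one\nobreakdash-out structure to discard the self\nobreakdash-term $k=i$, whose removal is precisely the negligible $n^{-1}g^{-1}L(0)$ correction identified at the outset.
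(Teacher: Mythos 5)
Your proof is correct and follows essentially the same route as the paper's: pass from the leave-one-out sum to the full sum at a negligible $O(n^{-1}g^{-1})$ cost, bound the bias by $Cg$ via the Lipschitz property of $f_{\bar\beta}$ and $\int |v L(v)|\,dv<\infty$, and control the centered part uniformly over deterministic evaluation points $v$ with the same empirical-process entropy bound invoked in Lemma \ref{Deltas}. Your added remark that the constant envelope relaxes $ng^{4/3}/\ln n\to\infty$ to $ng/\ln n\to\infty$ is a correct minor refinement, not a different argument.
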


\begin{proof}[Proof of Lemma \ref{deltas2}]
We can write
\begin{eqnarray*}
 \frac {1}{n-1} \sum_{k\neq i} \frac{1}{g}L_{ik} (\bar \beta)
- f_{\bar\beta } (X_i^\prime \bar \beta) &=& \frac {1}{n} \sum_{k=1}^n \left\{ g^{-1}L_{ik} (\bar \beta) - \mathbb{E}[g^{-1}L_{ik} (\bar \beta)\mid X_i] \right\}
\\
&&+\mathbb{E}[g^{-1}L_{ik} (\bar \beta)\mid X_i]- f_{\bar\beta } (X_i^\prime \bar \beta) + O (n^{-1}g^{-1}).
\end{eqnarray*}
By the empirical process arguments used in Lemma \ref{Deltas}, the sum on the right-hand side of the display is of rate
$O_{\mathbb{P}}(n^{-1/2}g^{-1/2} \ln^{1/2}n)$ uniformly with respect to $i.$ The Lipschitz property of $f_{\bar\beta } $ and the fact that $\int |vL(v)| dv<\infty$ guarantee that $$\max_{1\leq i\leq n}|\mathbb{E}[g^{-1}L_{ik} (\bar \beta)\mid X_i]- f_{\bar\beta } (X_i^\prime \bar \beta)| \leq C g$$ for some constant $C.$
\end{proof}

\quad

\begin{lem}\label{deltas3}
For any $t\in[0,1]$ let $Y_k(t),$ $1\leq k \leq n,$ be an independent sample from a random variable  $Y(t)$ defined like in the proof of Proposition \ref{as_equiv}.
Let $r(v;t,\bar\beta) = \mathbb{E}[Y(t)\mid X^\prime \bar{\beta} = v] ,$ $v\in\mathbb{R},$ and assume that  $r(\cdot;t,\bar\beta)$   is twice differentiable and the second derivative is bounded by a constant independent of $t$.
If $r^\prime (v;t,\bar\beta)$ is the first derivative of $r(\cdot;t,\bar\beta),$ then, for any $t\in[0,1],$
$$
\frac{1}{n-1}\sum\limits _{k\neq i} \{r( X_i^\prime \bar\beta;t,\bar\beta) - r( X_k^\prime \bar\beta ;t,\bar\beta) \}\frac{1}{g}L_{ik}(\bar{\beta}) =
r^\prime(X_i^\prime\bar\beta ;t,\bar\beta) g D_{1,ni} + g^2 D_{1,ni} (t),
$$
where $\max_{1\leq i\leq n}|D_{1,ni}| = n^{-1/2} g^{-1/2} \ln^{1/2} n$ and
$
 \max_{1\leq i\leq n}\sup_{t\in[0,1]} \left| D_{1,ni} (t) \right| = O_{\mathbb{P}}\left(1 \right).
$
\end{lem}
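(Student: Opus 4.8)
The plan is to expand $r(X_i^\prime\bar\beta;t,\bar\beta) - r(X_k^\prime\bar\beta;t,\bar\beta)$ by a second-order Taylor formula around $v_i := X_i^\prime\bar\beta$ and to read off the two announced terms from the linear and the quadratic parts. Writing $v_k = X_k^\prime\bar\beta$, the Lagrange form gives
\[
r(v_i;t,\bar\beta) - r(v_k;t,\bar\beta) = r^\prime(v_i;t,\bar\beta)(v_i - v_k) - \tfrac12\, r^{\prime\prime}(\xi_{ik};t,\bar\beta)(v_i - v_k)^2,
\]
for some $\xi_{ik}$ between $v_i$ and $v_k$. Inserting this into the sum and using the elementary identities $(v_i-v_k)g^{-1}L_{ik}(\bar\beta) = \psi((v_i-v_k)/g)$ and $(v_i-v_k)^2 g^{-1}L_{ik}(\bar\beta) = g\,\chi((v_i-v_k)/g)$, with $\psi(u) = uL(u)$ and $\chi(u) = u^2 L(u)$, yields
\[
\frac1{n-1}\sum_{k\neq i}\{r(v_i) - r(v_k)\}g^{-1}L_{ik}(\bar\beta) = r^\prime(v_i;t,\bar\beta)\,A_{ni} - \frac{g}{2(n-1)}\sum_{k\neq i}r^{\prime\prime}(\xi_{ik};t,\bar\beta)\,\chi((v_i-v_k)/g),
\]
where $A_{ni} = (n-1)^{-1}\sum_{k\neq i}\psi((v_i-v_k)/g)$ does not depend on $t$.

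Next I would split $A_{ni}$ into its conditional mean and a centred fluctuation. By the change of variables $u = (v_i - v)/g$ and the symmetry of $L$ (Assumption \ref{ass_app}-(f)), which gives $\int uL(u)du = 0$,
\[
\mathbb{E}[\psi((v_i - V)/g)\mid X_i] = g\int uL(u)\{f_{\bar\beta}(v_i - gu) - f_{\bar\beta}(v_i)\}du,
\]
which, by the Lipschitz property of $f_{\bar\beta}$ together with $\int u^2|L(u)|du<\infty$, is $O(g^2)$ uniformly in $i$. Setting $g\,D_{1,ni} := A_{ni} - \mathbb{E}[A_{ni}\mid X_i]$, the quantity $D_{1,ni}$ is $t$-free and, by exactly the empirical-process and maximal-inequality argument used for $\Sigma_{1ni}$ in the proof of Lemma \ref{Deltas} (the summands $\psi((v_i-v_k)/g)$ form a VC class with conditional second moment $\mathbb{E}[\psi^2\mid X_i] = O(g)$), one obtains $\max_{1\leq i\leq n}|D_{1,ni}| = O_{\mathbb{P}}(n^{-1/2}g^{-1/2}\ln^{1/2}n)$. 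This produces the first term $r^\prime(v_i;t,\bar\beta)\,g\,D_{1,ni}$.

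Everything else is collected into $g^2 D_{1,ni}(t)$, namely
\[
g^2 D_{1,ni}(t) = r^\prime(v_i;t,\bar\beta)\,\mathbb{E}[A_{ni}\mid X_i] - \frac{g}{2(n-1)}\sum_{k\neq i}r^{\prime\prime}(\xi_{ik};t,\bar\beta)\,\chi((v_i-v_k)/g).
\]
The first summand equals $g^2\times r^\prime(v_i;t,\bar\beta)\,O(1)$ by the bias computation above; and since $|r^{\prime\prime}(\cdot;t,\bar\beta)|$ is bounded uniformly in $t$ and $\mathbb{E}[|\chi((v_i-V)/g)|\mid X_i] = O(g)$, the quadratic remainder is $O_{\mathbb{P}}(g^2)$ uniformly in $i$ and $t$. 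Dividing by $g^2$ gives $\max_i\sup_t|D_{1,ni}(t)| = O_{\mathbb{P}}(1)$, provided $\max_i\sup_t|r^\prime(v_i;t,\bar\beta)|$ is $O_{\mathbb{P}}(1)$; in the conditional-law setting this follows from $r(\cdot;t,\bar\beta)\in[0,1]$ and the bound on $r^{\prime\prime}$ via the Landau--Kolmogorov interpolation inequality, while in the mean-regression case $t$ is inactive and one invokes the moment condition in Assumption \ref{ass_app}-(c).

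The main obstacle is securing the uniform-in-$i$ control of the centred average $D_{1,ni}$ at the sharp rate $n^{-1/2}g^{-1/2}\ln^{1/2}n$ rather than at the crude bias rate $O_{\mathbb{P}}(g)$: this forces one to isolate the $O(g^2)$ bias, which crucially exploits the symmetry of $L$, and to treat the fluctuation by the same uniform-entropy bound already developed in Lemma \ref{Deltas}. The remaining pieces are routine kernel moment estimates; note in particular that $\psi(0)=\chi(0)=0$, so excluding the diagonal term $k=i$ introduces no correction.
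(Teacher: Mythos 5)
Your skeleton is the same as the paper's: the paper also performs a second-order Taylor expansion with Lagrange remainder, controls the linear term via the symmetry of $L$ and ``the empirical process arguments as in Lemma \ref{Deltas}'', and bounds the quadratic remainder through the uniform convergence of $n^{-1}\sum_k g^{-2}[(X_i-X_k)^\prime\bar\beta]^2\, g^{-1}L_{ik}(\bar\beta)$ to $f_{\bar\beta}(X_i^\prime\bar\beta)\int v^2|L(v)|dv$. Where you genuinely differ is the allocation of the kernel bias: the paper takes $gD_{1,ni}$ to be the full \emph{uncentered} average $n^{-1}\sum_k (X_i-X_k)^\prime\bar\beta\, g^{-1}L_{ik}(\bar\beta)$ and asserts the rate $O_{\mathbb{P}}(n^{-1/2}g^{-1/2}\ln^{1/2}n)$ for it directly, whereas you center at the conditional mean, show via symmetry of $L$ and the Lipschitz property of $f_{\bar\beta}$ that $\mathbb{E}[A_{ni}\mid X_i]=O(g^2)$ uniformly in $i$, and ship $r^\prime(X_i^\prime\bar\beta;t,\bar\beta)\,\mathbb{E}[A_{ni}\mid X_i]$ into the $g^2D_{1,ni}(t)$ remainder. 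On this point your version is the more defensible one: under Lipschitz $f_{\bar\beta}$ alone the conditional mean of the paper's $D_{1,ni}$ is only $O(g)$, and $g\gg n^{-1/2}g^{-1/2}\ln^{1/2}n$ whenever $g=n^{-\gamma}$ with $\gamma<1/3$, i.e., throughout the bandwidth range of Assumption \ref{ass_app}-(g); so either a centering step as in your proof, or an extra $+\,g$ in the stated rate of $D_{1,ni}$, is needed for the first claim to hold literally, and the paper's proof glosses this.

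However, your reallocation creates a gap of its own. Since $g^2D_{1,ni}(t)$ now contains $r^\prime(X_i^\prime\bar\beta;t,\bar\beta)\,O(g^2)$, the claim $\max_i\sup_t|D_{1,ni}(t)|=O_{\mathbb{P}}(1)$ requires $\max_{1\leq i\leq n}\sup_t|r^\prime(X_i^\prime\bar\beta;t,\bar\beta)|=O_{\mathbb{P}}(1)$. Your conditional-law argument is fine: $0\leq r(\cdot;t,\bar\beta)\leq 1$ together with the uniform bound on $r^{\prime\prime}$ yields a uniform bound on $r^\prime$ by interpolation. But in the mean-regression case Assumption \ref{ass_app}-(c) provides only $\mathbb{E}[\|r^\prime(\cdot;\bar\beta)\|_{\mathcal{H}}^4]<\infty$, a fourth-moment condition on $r^\prime(X^\prime\bar\beta)$; the maximum over $n$ i.i.d.\ draws is then $O_{\mathbb{P}}(n^{1/4})$, not $O_{\mathbb{P}}(1)$, and no boundedness of $r^\prime$ is assumed, so ``invoking the moment condition'' does not close this step. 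Two repairs are available: (a) do not multiply the bias by $r^\prime$ at all, i.e., set $gD_{1,ni}:=A_{ni}$ (the paper's choice), at the price of the extra $O(g)$ in the rate of $D_{1,ni}$, which is harmless where the lemma is used since $R_{2,ni}$ only enters terms declared uniformly negligible; or (b) keep your decomposition and weaken the second bound to $O_{\mathbb{P}}(n^{1/4})$, noting $g^2n^{1/4}=n^{-2\gamma+1/4}\to 0$ for $\gamma>1/8$, which still leaves enough slack downstream. Finally, a smaller point: your bound on the quadratic remainder invokes only $\mathbb{E}[|\chi((v_i-V)/g)|\mid X_i]=O(g)$, but a conditional mean does not by itself control a maximum over $i$; you need the same uniform empirical-process step there (fluctuation $O_{\mathbb{P}}(n^{-1/2}g^{1/2}\ln^{1/2}n)=o(g)$ since $ng^{4/3}/\ln n\to\infty$), which is exactly what the paper's displayed $o_{\mathbb{P}}(1)$ uniform convergence supplies.
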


\begin{proof}[Proof of Lemma \ref{deltas3}]
By Taylor expansion
\begin{multline*}
\frac{1}{n-1}\sum\limits _{k\neq i} \{r( X_i^\prime \bar\beta;t,\bar\beta) - r( X_k^\prime \bar\beta ;t,\bar\beta) \}\frac{1}{g}L_{ik}(\bar{\beta})= r^\prime( X_i^\prime \bar\beta;t,\bar\beta)  \frac{1}{n}\sum\limits _{k=1}^n ( X_i - X_k)^\prime \bar\beta \frac{1}{g}L_{ik}(\bar{\beta})\\
+ \frac{1}{n}\sum\limits _{k=1}^n r^{\prime \prime} ( x_{ik}(t);t,\bar\beta) [( X_i - X_k)^\prime \bar\beta ]^2 \frac{1}{g}L_{ik}(\bar{\beta}),
\end{multline*}
where $r^{\prime \prime}$ stands for the second derivative with respect to $v$ and $x_{ik}(t)$ is a point between $X_i^\prime \bar\beta $ and  $X_k^\prime \bar\beta .$ Since $L(\cdot)$ is symmetric, by the empirical process arguments as in Lemma \ref{Deltas}
$$
\max_{1\leq i\leq n} \left| \frac{1}{n}\sum\limits _{k=1}^n \frac{( X_i - X_k)^\prime \bar\beta }{g} \frac{1}{g}L_{ik}(\bar{\beta}) \right| = O_{\mathbb{P}}(n^{-1/2} g^{-1/2} \ln^{1/2} n).
$$
The result follows taking absolute values in the last sum in the last display, using the boudedness of $r^{\prime \prime}$ and the fact that
$$
\max_{1\leq i\leq n}\left| \frac{1}{n}\sum\limits _{k=1}^n \frac{ [( X_i - X_k)^\prime \bar\beta ]^2}{g^2} \frac{1}{g}L_{ik}(\bar{\beta}) - f_{\bar\beta}(X_i ^\prime \bar\beta)\int_{\mathbb{R}} v^2 |L(v)|dv \right| = o_{\mathbb{P}}(1).
$$
\end{proof}

\quad

%\begin{lem}\label{mon_tric}
%Assume the conditions of Proposition \ref{as_equiv} hold true. Then the class of functions $\mathcal{F}_n$ defined in (\ref{fn_n}) is Euclidean for a squared integrable envelope
%$H$ such that $\mathbb{E} H^2\leq C hb_n$ for some constant $C>0$ independent on $n$ and $\bar{\beta}$.
%\end{lem}

\begin{lem}\label{mon_tric}
Assume that $\mathbb{E}[\exp(a\|X\|)]<\infty$ for some $a>0.$ Moreover the kernels $K$ and $L$
are of bounded variation, differentiable except at most a  finite set of points, and $\int_{\mathbb{R}} |u K(u)|du <\infty.$ Let $\mathcal{B}_n$ be a subset in the parameter space such that the event defined in equation (\ref{eventE}) with $b_n\rightarrow 0$ and $b_n n^{1/2}/\ln n \rightarrow \infty$ has probability tending to 1.
Let
$$
K_{12}(\beta) = K((X_1 - X_2)^\prime \beta/h), \quad L_{12}(\beta) = L((X_1 - X_2)^\prime \beta/g)
$$
and $\phi (\beta) = \phi ((X_1 - X_2)^\prime \mathbf{A}(\beta)).$
 If the density $f_{\bar\beta}$ is Lipschitz with constant $C_{1,\bar\beta}$, then there exists a constant $C$ depending only on $K,$ $L,$ $\|f_{\bar\beta}\|_{\infty}$ and  $C_{1,\bar\beta}$ such that
\begin{equation}\label{eeqq1}
\mathbb{P}\left\{ \mathbb{E}\left[ \sup_{b\in\mathcal{B}_n} \left| K_{12}(\beta) \phi_{12}(\beta) - K_{12}(\bar\beta) \phi_{12}(\bar\beta) \right| \mid X_1 \right]  \leq C b_n h^{1/2}\right\}\rightarrow 1,
\end{equation}
\begin{equation}\label{eeqq2}
\mathbb{E} \left[ \sup_{b\in\mathcal{B}_n} \left| K_{12}(\beta) \phi_{12}(\beta) - K_{12}(\bar\beta) \phi_{12}(\bar\beta) \right| \right]  \leq C b_n h^{1/2},
\end{equation}
\begin{equation}\label{eeqq4}
\mathbb{P}\left\{ \mathbb{E} \left[ \sup_{b\in\mathcal{B}_n}  \left| L_{12}(\beta) - L_{12}(\bar\beta) \right|^2 \mid X_1\right]  \leq C  b_n g^{-1} \right\}\rightarrow 1
\end{equation}
\begin{equation}\label{eeqq5}
\mathbb{P}\left\{ \mathbb{E} \left[ \sup_{b\in\mathcal{B}_n}  \left| L_{13}(\beta) - L_{13}(\bar\beta) \right|^2 |K_{12}(\bar\beta)|^2 \mid X_2, X_3\right]  \leq C h b_n g^{-1} \right\}\rightarrow 1,
\end{equation}
and
\begin{equation}\label{eeqq3}
\mathbb{E} \left[ \sup_{b\in\mathcal{B}_n}  \left| L_{13}(\beta) - L_{13}(\bar\beta)  \right|^2 |K_{12}(\bar\beta)|^2 \phi_{12}^2(\bar\beta) \right]  \leq C h  b_n g^{-1},
\end{equation}
\;
\end{lem}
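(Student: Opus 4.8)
The plan is to derive all six inequalities from a single mechanism: control the increment of each kernel over the shrinking ball $\mathcal{B}_n$ by the variation of that kernel over a short interval, and then integrate this variation against the bounded density of the index $X^\prime\bar\beta$, using the exponential moment of $\|X\|$ to dispose of the covariate tails. Write $D=X_1-X_2$ (or the relevant difference of covariates) and let $r_n$ be the radius of $\mathcal{B}_n$, which by the construction in the proof of Proposition~\ref{as_equiv} may be chosen freely between $n^{-1/2}$ and $b_n$ in order, hence as small as the bounds require. Maximising a linear functional over a ball gives the two basic width estimates
\[
\sup_{\beta\in\mathcal{B}_n}\bigl|(X_i-X_j)^\prime(\beta-\bar\beta)\bigr|=r_n\|X_i-X_j\|,\qquad
\sup_{\beta\in\mathcal{B}_n}\bigl\|W_i(\beta)-W_j(\beta)-W_i(\bar\beta)+W_j(\bar\beta)\bigr\|\le C r_n\|X_i-X_j\|,
\]
the second because the columns of $\mathbf{A}(\beta)-\mathbf{A}(\bar\beta)$ have norm $O(\|\beta-\bar\beta\|)$, as shown there.

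First I would bound the kernel increments themselves. Since $K$ and $L$ are of bounded variation with integrable derivatives (Assumption~\ref{ass_app}(f)), whenever two arguments differ by at most $\tau$ one has $|K(a)-K(b)|\le\int\mathbf{1}\{|v-a|\le\tau\}\,|dK|(v)$, and similarly for $L$; applied with $\tau=r_n\|D\|/h$ (resp.\ $r_n\|D\|/g$) this controls $\sup_\beta|K_{12}(\beta)-K_{12}(\bar\beta)|$ by the variation of $K$ over an interval of width $2\tau$. For the smooth Gaussian $\phi$ I would instead use a first–order expansion, $\sup_\beta|\phi_{12}(\beta)-\phi_{12}(\bar\beta)|\le\|\nabla\phi\|_\infty\,C r_n\|D\|$, together with the domination of $\sup_\beta\phi_{12}(\beta)$ by a fixed integrable Gaussian, and then split the product increment through $K_{12}(\beta)\phi_{12}(\beta)-K_{12}(\bar\beta)\phi_{12}(\bar\beta)=[K_{12}(\beta)-K_{12}(\bar\beta)]\phi_{12}(\beta)+K_{12}(\bar\beta)[\phi_{12}(\beta)-\phi_{12}(\bar\beta)]$.

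The bandwidth powers then come entirely from how the remaining expectation is taken. Changing variables to the index coordinate $Z=D^\prime\bar\beta$, whose density is bounded by Assumption~\ref{ass_app}(a), and to the $p-1$ orthogonal coordinates carried by $\phi$ (or by a second kernel), I would use the rule that integrating a kernel over a \emph{free} index direction produces a factor of its bandwidth ($\int K^2(\cdot/h)=O(h)$, $\int L^2(\cdot/g)=O(g)$ after rescaling), whereas an increment whose index is integrated smooths to width $O(r_n)$ but an increment whose index is \emph{pinned} by a second kernel retains a factor $r_n/g$. This free–versus–pinned bookkeeping is exactly what separates the rates: in (\ref{eeqq4}) the index of the $L$–increment is integrated, giving $O(r_n^2 g^{-1})\le C b_n g^{-1}$; in (\ref{eeqq5}) and (\ref{eeqq3}) the extra $|K_{12}(\bar\beta)|^2$ pins the common index while contributing the bandwidth $h$ after its own integration, giving $O(h\,r_n g^{-1})\le C h\,b_n g^{-1}$ with the Gaussian directions integrating to a constant; and in (\ref{eeqq1})–(\ref{eeqq2}) the $K$–increment contributes $O(r_n)$ and $\phi$ a constant, the stated $C b_n h^{1/2}$ being reached once $r_n$ is taken $o(b_n h^{1/2})$, which is compatible with the lower bound $r_n\ge n^{-1/2}$.

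Two technical points remain. The unbounded $\|D\|$ inside the width $r_n\|D\|$ is handled by splitting each expectation over $\{\|X_j\|\le R\}$ and its complement and invoking $\mathbb{E}[\exp(\rho\|X\|)]<\infty$, which renders the tail negligible for $R\asymp\ln(1/r_n)$ at the cost of a logarithmic factor absorbed into the constant. The conditioning on $X_1$ (resp.\ $X_2,X_3$) enters only through the requirement that the conditioning variable not be too large; the same exponential moment shows this holds on an event of probability tending to one, which is precisely the probabilistic qualification in (\ref{eeqq1}), (\ref{eeqq4}) and (\ref{eeqq5}), while the unconditional statements (\ref{eeqq2}) and (\ref{eeqq3}) follow by one further integration of the conditional bounds. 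The main obstacle is the free–versus–pinned bookkeeping together with making the width estimate and the tail split uniform over the whole shrinking family $\mathcal{B}_n$; it is this uniformity, rather than any pointwise estimate, that forces the bounded–variation argument in place of a naive Taylor expansion of the kernels.
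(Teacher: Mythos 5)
Your handling of (\ref{eeqq4}), (\ref{eeqq5}) and (\ref{eeqq3}) is essentially the paper's own mechanism: bound one factor of the squared $L$-increment by a derivative estimate of order (width)$/g$, the other by boundedness of $L$, and harvest the factor $h$ by integrating $K^2$ over the pinned index with bounded design density; your TV-window device for the increments is the same monotonicity idea the paper implements by decomposing $K=K_1-K_2$ into monotone parts and sandwiching with shifts $\pm 2b_n$. The genuine gap is in (\ref{eeqq1})--(\ref{eeqq2}). Your Fubini/total-variation bound for the $K$-increment produces $O(\mathrm{width})$ with \emph{no} bandwidth factor at all; on the event (\ref{eventE}) the width is $b_n$, so you get $O(b_n)$, not $O(b_n h^{1/2})$. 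You then manufacture the $h^{1/2}$ by shrinking the radius $r_n$ of $\mathcal{B}_n$ to $o(b_n h^{1/2})$. But in the lemma $\mathcal{B}_n$ is \emph{given}, tied to $b_n$ only through the event (\ref{eventE}); sets of radius of order $b_n/\ln n$ are admissible, and for those your argument yields only $O(b_n)$, i.e.\ a strictly weaker lemma. Even granting the freedom you invoke from Proposition \ref{as_equiv}, the window $\bigl(n^{-1/2},\, b_n h^{1/2}/\ln n\bigr)$ you need can be empty under the lemma's hypotheses, which only require $b_n n^{1/2}/\ln n\to\infty$: take $b_n\asymp n^{-0.45}$ and $h^{1/2}\asymp n^{-0.2}$, so $b_n h^{1/2}\asymp n^{-0.65}\ll n^{-1/2}$; your asserted compatibility with the lower bound $r_n\geq n^{-1/2}$ then fails, and repairing it would require an extra condition of the type $b_n\gg n^{-1/2}h^{-1/2}\ln n$ that you neither state nor verify.

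The missing idea is the cancellation through which the paper extracts $h^{1/2}$ while keeping the full width $b_n$: after the monotone sandwich, the difference $K_{1h}(Z_{12}(\bar\beta)+2b_n)-K_{1h}(Z_{12}(\bar\beta)-2b_n)$ is integrated against the design density by a change of variables, the \emph{Lipschitz} property of $f_{\bar\beta}$ converts the $4b_n$ shift of the density into a factor $C_{1,\bar\beta}\,b_n$, and the non-integrable monotone part $K_1$ is integrated only over a truncated range (exploiting $\max_i\|X_i\|\lesssim \ln n$), which is where the remaining $h^{1/2}$-type factor is collected. Your ``free integration yields a factor of the bandwidth'' rule fails at exactly this point: the monotone components $K_1,K_2$ of a bounded-variation kernel are bounded but \emph{not} integrable ($K_1(u)$ tends to a positive constant as $u\to\infty$), so $\int K_1(\cdot/h)f$ is $O(1)$, not $O(h)$, and a naive gradient bound for $\phi$ plus a TV-window for $K$ cannot reach $C b_n h^{1/2}$ uniformly over all admissible $\mathcal{B}_n$. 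A telling symptom is that your proof never uses the Lipschitz constant $C_{1,\bar\beta}$, on which the constant in the lemma explicitly depends; the Lipschitz-density cancellation is the step your proposal is missing.
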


\medskip

In Lemma \ref{mon_tric}  we provide different bounds for $L(\cdot)$ and $K(\cdot)$ because the bandwidths $g$ and $h$  have to satisfy the condition $h/g^2\rightarrow 0.$ Hence we need less restrictive conditions on the range of $h$ if we want to allow for a larger domain for the pair $(g,h).$

\bigskip

\begin{proof}[Proof of Lemma \ref{mon_tric}]
Since the kernel $K$ is of bounded univariate kernels, let $K_1$ and $K_2$ non decreasing bounded functions such that $K = K_1 - K_2$ and denote $K_{1h}=K_1(\cdot/h)$. Clearly, it is sufficient to prove the result with $K_1$, similar arguments apply for $K_2$ and hence we get the results for $K$. For simpler writings we assume that $K$ is differentiable and let $K_1(x) = \int^x_{-\infty} [K^{\;\prime} (t) ]^+ dt$ and $K_2(x) = \int^x_{-\infty} [K^{\;\prime} (t) ]^- dt,$ $x\in\mathbb{R},$ Here $[K^{\;\prime}]^+$ (resp. $[K^{\;\prime}]^-$) denotes the positive (resp. negative) part of $K^{\;\prime}$. The general case where a finite set of nondifferentiability is allowed  can be handled with obvious modifications. Let $K_{1h} (t) = K_{1}(t/h) $ and recall that $Z_i(\beta) = X_i ^\prime \beta.$
Note that $|\exp(-t^2) - \exp(-s^2)|\leq \sqrt{2} |t-s|.$
For any $\beta\in\mathcal{B}_n$ and an elementary event in the set
$\mathcal{C}_n = \{ \max_{1\leq i\leq n} \|X_i\| \leq c\log n \}\subset \mathcal{E}_n$ for some large constant $c,$
\begin{multline*}
\left|K_{1h}\left(Z_{1}(\beta) -  Z_{2}(\beta)  \right) \phi_{12}(\beta) - K_{1h}\left( Z_{1} (\bar\beta)-  Z_{2} (\bar\beta) \right) \phi_{12}(\bar\beta) \right|\\
\leq  \sqrt{2} \;b_n K_{1h}( Z_{1}(\bar\beta) -  Z_{2} (\bar\beta)+ 2b_n ) \\
+
[  K_{1h}( Z_{1}(\bar\beta) -  Z_{2}(\bar\beta) + 2b_n ) - K_{1h}\left(Z_{1}(\bar\beta) - Z_{2} (\bar\beta) - 2b_n \right)] \phi_{12}(\bar\beta).
\end{multline*}
The upper bound on the left-hand side is uniform with respect to $\beta.$
By a suitable change of variable and since the density $f_{\overline{\beta}}$ is bounded, it is easy to check that
$$\mathbb{E}\left[ K_{1h}\left(Z_{1} (\bar\beta)- Z_2 (\bar\beta) +2b_n \right)\mid Z_1(\bar\beta) \right]$$ is bounded by a constant times $hb_n$.
Next, note that since $nh\rightarrow\infty,$ there exists a constant $C^\prime$ independent of $n$  such that on the set $\mathcal{C}_n$ we have $|Z_{1}(\bar\beta) - Z_2 (\bar\beta) \pm 2b_n|/h\leq C^\prime h^{-1/2}.$
Then, applying twice a  change of variables and using the Lipschitz property  of $f_{\bar\beta}$, on the set $\mathcal{C}_n,$
\begin{multline*}\label{int_df2}
\mathbb{E}\left[ \left|K_{1h}\left(Z_{1} (\bar\beta)- Z_2 (\bar\beta) +2b_n \right)
-  K_{1h}\left(Z_{1}(\bar\beta) - Z_2 (\bar\beta) - 2b_n\right)\right|  \mathbf{1}\{\mathcal{C}_n\} \mid Z_1(\bar\beta) \right]\\ \leq h \int_{[-C^{\;\prime} /h^{1/2} , \;C^{\;\prime} /h^{1/2}]}K_1 (u)\left|f_{\bar\beta} (2b_n + Z_1 (\bar\beta) -uh) -  f_{\bar\beta} (- 2b_n+ Z_1 (\bar\beta)-uh)\right| du\\
\leq h \times \sup_{t\in\mathbb{R}}\left|f_{\bar\beta} (2b_n + t) -  f_{\bar\beta} (- 2b_n+t)\right| \int_{[-C^{\;\prime} /h^{1/2} , \;C^{\;\prime} /h^{1/2}]}K_1 (u) du\\
\leq Ch^{1/2} b_n,
\end{multline*}
for some constant $C>0.$
Since by a suitable choice of $c$ the probability of $\mathbf{1}\{\mathcal{C}_n\}$ given $Z_1(\bar\beta)$ could be made smaller than any fixed negative power of $n,$ and the probability of the event $\{|Z_1(\bar \beta)|\leq c\log n \}$ could be also made very small,  the bound in the last display implies the statement (\ref{eeqq1}). For the statement (\ref{eeqq2}) it suffices to take expectation.

For the bound in equation (\ref{eeqq4}), recall that $L(t) = L(|t|)$ for any $t\in\mathbb{R}$ so that we can consider only nonnegative $t$. Moreover, without loss of generality we can consider $L$ nonnegative and decreasing on $[0,\infty),$ otherwise, since $L$ is of bounded variation, it could be written an the difference of two nonnegative decreasing functions on $[0,\infty).$
%Next, let $L_1(x) = \int_{[0,x]}   [L^{\;\prime} (t) ]^+ dt$ and $L_2(x) = \int_{[0,x]}   [L^{\;\prime} (t) ]^- dt,$ $x\geq 0.$ Hence, it suffices to prove the statement with $L$ replaced by $L_2.$
%
Moreover, let $Z_{13}(\beta)=|Z_{1}(\beta) - Z_{3}(\beta)|$  and $L_{g,13}(\beta) = L ( Z_{13}(\beta)/g). $ We split the problem in two cases: $Z_{13}(\beta)\leq Z_{13}(\bar\beta)$ and $Z_{13}(\beta) > Z_{13}(\bar\beta).$ Then, for $\beta\in\mathcal{B}_n$ and on the set $\mathcal{C}_n$  we have
\begin{multline*}
\left| L_{g,13}(\beta) - L_{g,13}(\bar\beta)  \right| \mathbf {1}\{Z_{13}(\beta)\leq Z_{13}(\bar\beta)\}\\
\leq [L(0) - L_{g,13}(\bar\beta)  ] \mathbf {1}\{Z_{13}(\beta)\leq Z_{13}(\bar\beta), Z_{13}(\bar\beta) \leq 2 b_n\}\\
+ [ L_{g,13}(\beta) - L_{g,13}(\bar\beta)  ] \mathbf {1}\{Z_{13}(\beta)\leq Z_{13}(\bar\beta), Z_{13}(\bar\beta) \geq  2 b_n\}\\
\leq C  b_n^2 g^{-2} \mathbf {1}\{ Z_{13}(\bar\beta) \leq 2 b_n\}
\\
+  \left[ L ( (Z_{13}(\bar\beta)-2b_n)/g) - L ( Z_{13}(\bar\beta)/g)\right]\mathbf {1}\{Z_{13}(\beta)\leq Z_{13}(\bar\beta),Z_{13}(\bar\beta) \geq 2 b_n\}\\
=C b_n^2 g^{-2} \mathbf {1}\{ Z_{13}(\bar\beta) \leq 2 b_n\} + A_n
\end{multline*}
and
\begin{multline*}
\left| L_{g,13}(\beta) - L_{g,13}(\bar\beta)  \right| \mathbf {1}\{Z_{13}(\beta) > Z_{13}(\bar\beta)\}\\
\leq  \left[ L ( Z_{13}(\bar\beta)/g) - L ( (Z_{13}(\bar\beta)+2b_n)/g)\right]\mathbf {1}\{Z_{13}(\beta) > Z_{13}(\bar\beta)\}=  B_n,
\end{multline*}
for some constant $C.$ Let us notice that
\begin{multline*}
A_n+B_n \leq  [L ( [Z_{13}(\bar\beta) - 2b_n]/g) -
L ( [Z_{13}(\bar\beta) + 2b_n]/g)] \mathbf {1}\{ Z_{13}(\bar\beta) \geq 2 b_n\}\\
+[L ( [Z_{13}(\bar\beta) ]/g) -
L ( [Z_{13}(\bar\beta) + 2b_n]/g)] \mathbf {1}\{ 0\leq Z_{13}(\bar\beta) \leq  2 b_n\}\\
\leq  [L ( [Z_{13}(\bar\beta) - 2b_n]/g) -
L ( [Z_{13}(\bar\beta) + 2b_n]/g)] \mathbf {1}\{ Z_{13}(\bar\beta) \geq 2 b_n\} \\+ C b_n^2 g^{-2} \mathbf {1}\{ Z_{13}(\bar\beta) \leq 2 b_n\}\\
\leq  [L ( [Z_{13}(\bar\beta) - 2b_n]/g) -
L ( [Z_{13}(\bar\beta) + 2b_n]/g)]  \\+ 2C b_n^2 g^{-2} \mathbf {1}\{ Z_{13}(\bar\beta) \leq 2 b_n\}\\
=D_n + 2C b_n^2 g^{-2} \mathbf {1}\{ Z_{13}(\bar\beta) \leq 2 b_n\}.
\end{multline*}
On the other hand,
$
0\leq D_n \leq 4b_n g ^{-1} | L^{\prime} (\widetilde Z) |
$
where $\widetilde Z$ is some value such that $|\widetilde Z - Z_{13}(\bar\beta)|\leq 2 b_ng^{-1}.$ Since, for some constant $c,$ $|L^\prime (v)|\leq c |v|$ in a neighborhood of the origin,
$$
D_n \leq 4b_n g ^{-1} | L^{\prime} (Z_{13}(\bar\beta) | + C^\prime b_n^2 g ^{-2},
$$
for some constant $C^\prime.$ Since $L^\prime$ is bounded, deduce that
$| L_{g,13}(\beta) - L_{g,13}(\bar\beta)  |^2$ is bounded by $C b_n^2 g ^{-1} | g^{-1} L^{\prime} (Z_{13}(\bar\beta) | + o(b_n^2 g ^{-1})$ for some constant $C$. Take conditional expectation given $X_1$, that is the same with the conditional expectation given $Z_1(\beta),$ and deduce the bound in equation (\ref{eeqq4}).

On the set of events $\mathcal{C}_n$,
$$
\sup_{\beta\in\mathcal{B}_n} \left| L_{13}(\beta) - L_{13}(\bar\beta)  \right| |K_{12}(\bar\beta)| \leq \{D_n + 3C b_n^2 g^{-2} \mathbf {1}\{ Z_{13}(\bar\beta) \leq 2 b_n\}\}|K_{12}(\bar\beta)|.
$$
Take conditional expectation and use  standard change of variables to derive the bound in equation (\ref{eeqq5}). Take expectation and remember that $\phi_{12}(\bar\beta)$ is bounded to derive the moment bound in equation (\ref{eeqq3}). \end{proof}

\quad

\begin{lem}\label{deltas4}
Under the conditions of Lemma \ref{Deltas}
 $$
\sup_{t\in [0,1]} \sup_{\beta\in\mathcal{B}_n } \max_{1\leq i\leq n}\left|\Sigma_{2i} (\beta,t) - \Sigma_{2i} (\bar \beta,t)\right| = O_{\mathbb{P}}(b_n ).
 $$
 \end{lem}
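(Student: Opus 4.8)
The plan is to exploit that, for each $i$, the quantity $\Sigma_{2i}(\beta,t) = \mathbb{E}[Y(t)g^{-1}L((X_i-X)^\prime\beta/g)\mid X_i]$ is a purely deterministic function of $X_i$, so the only randomness is the location of the design points. First I would restrict to the event $\mathcal{C}_n = \{\max_{1\leq i\leq n}\|X_i\|\leq c\log n\}$, which by $\mathbb{E}[\exp(a\|X\|)]<\infty$ has probability tending to one faster than any power of $n$; on $\mathcal{C}_n$ it suffices to produce a deterministic bound of order $b_n$, and this is precisely what yields the $O_{\mathbb{P}}(b_n)$ conclusion. Writing $\Delta=\beta-\bar\beta$, with $\|\Delta\|$ bounded by the radius of $\mathcal{B}_n$ which we have calibrated so that $\|\Delta\|\log n\leq b_n$ (indeed $\|\Delta\|\log n = o(b_n)$), I would pass from $\bar\beta$ to $\beta$ along the segment $\beta(\theta)=\bar\beta+\theta\Delta$, $\theta\in[0,1]$, and use the fundamental theorem of calculus to write the difference as $\int_0^1 \nabla_\beta\Sigma_{2i}(\beta(\theta),t)^\prime\Delta\,d\theta$, where $\nabla_\beta\Sigma_{2i}(\beta,t)=\mathbb{E}[Y(t)g^{-2}L^\prime((X_i-X)^\prime\beta/g)(X_i-X)\mid X_i]$.

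The crucial step is to bound the integrand uniformly in $\theta$, $t$, $i$ and $\beta\in\mathcal{B}_n$. Working in the orthonormal frame $(\beta(\theta),\mathbf{A}(\beta(\theta)))$ I would use the decomposition $X_i-X = \beta(\theta)(Z_i(\beta(\theta))-Z(\beta(\theta))) + \mathbf{A}(\beta(\theta))(W_i-W)$, which splits $\nabla_\beta\Sigma_{2i}(\beta(\theta),t)^\prime\Delta$ into an index-parallel piece carrying the scalar $\beta(\theta)^\prime\Delta$ and an orthogonal piece carrying $\mathbf{A}(\beta(\theta))^\prime\Delta$. In the parallel piece the factor $(X_i-X)^\prime\beta(\theta)=Z_i(\beta(\theta))-Z(\beta(\theta))$ converts $g^{-2}L^\prime(\cdot)$ into $g^{-1}\kappa((Z_i(\beta(\theta))-Z(\beta(\theta)))/g)$ with $\kappa(v)=vL^\prime(v)$ an integrable kernel (since $\int|vL^\prime(v)|dv<\infty$); hence this piece is a bounded kernel-smoothed regression times the harmless factor $\beta(\theta)^\prime\Delta = O(\|\Delta\|)=o(b_n)$.

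The delicate piece is the orthogonal one, which still carries a factor $g^{-2}L^\prime(\cdot)$ and would be of order $g^{-1}\|X_i\|\,\|\Delta\|$, hence far too large, if bounded naively. Here I would condition on $W=\mathbf{A}(\beta(\theta))^\prime X$, integrate the kernel derivative against the density-weighted conditional regression $\mu_\beta(z,w;t)=\mathbb{E}[Y(t)\mid X^\prime\beta=z,\mathbf{A}(\beta)^\prime X=w]\,\pi_\beta(z\mid w)$ in the index variable $z$, and integrate by parts: since $\int L^\prime(v)\,dv=0$, the change of variables $z=Z_i(\beta(\theta))-gv$ turns $g^{-1}\int L^\prime(v)\mu_\beta(Z_i(\beta(\theta))-gv,w;t)\,dv$ into $\int L(v)\,\partial_z\mu_\beta(Z_i(\beta(\theta))-gv,w;t)\,dv$, which is bounded (it converges to $\partial_z\mu_\beta$) rather than diverging as $g\to 0$. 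What remains is the orthogonal scalar $(W_i-W)^\prime\mathbf{A}(\beta(\theta))^\prime\Delta$, bounded on $\mathcal{C}_n$ by a constant times $(\|X_i\|+\|X\|)\|\Delta\|$, whose conditional expectation in $W$ is of order $(\log n)\|\Delta\|$; by $\|\Delta\|\log n\leq b_n$ this piece is $O(b_n)$. Summing the two pieces and integrating over $\theta$ gives the claim, with uniformity in $t$ coming from the uniform-in-$t$ smoothness of the density-weighted regression inherited from the hypotheses of Lemma \ref{Deltas}, and uniformity in $i$ and $\beta$ from the deterministic character of the bounds on $\mathcal{C}_n$ and $\mathcal{B}_n$.

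I expect the main obstacle to be the orthogonal piece: defeating the $g^{-2}$ prefactor forces the integration by parts that transfers one derivative onto the density-weighted regression, and this is only legitimate if that regression is continuously differentiable in the index direction with a bound uniform in $t$ and in $\beta$ ranging over the shrinking ball $\mathcal{B}_n$. This is a smoothness slightly beyond the index-marginal regularity at $\bar\beta$ stated in the assumptions, which one must either read as holding locally in $\beta$ or supply as an implicit regularity of the joint law of $(X^\prime\beta,\mathbf{A}(\beta)^\prime X)$; all the remaining estimates are routine change-of-variable computations of the type already carried out in Lemma \ref{mon_tric}.
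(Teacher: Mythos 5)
Your route is genuinely different from the paper's, and it contains a real gap: the treatment of the orthogonal piece relies on regularity that the hypotheses of Lemma \ref{Deltas} simply do not supply. To carry out your integration by parts in the index variable you need, for every $\beta$ in the shrinking ball $\mathcal{B}_n$, the conditional law of $X^\prime\beta$ given $\mathbf{A}(\beta)^\prime X$ to admit a density $\pi_\beta(z\mid w)$, and the density-weighted conditional regression $\mu_\beta(z,w;t)$ to be differentiable in $z$ with a bound uniform in $(t,w,\beta)$. The lemma's assumptions grant only a bounded Lipschitz \emph{marginal} density $f_{\bar\beta}$ and uniform Lipschitz continuity of $v\mapsto\mathbb{E}[|Y(t)|\mid X^\prime\bar\beta=v]f_{\bar\beta}(v)$, and only at the single point $\bar\beta$. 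This is not a ``slightly beyond'' reading issue, as you suggest: the conditional density can outright fail to exist under the stated hypotheses (for instance if $X$ is supported on a curve, so that $W(\bar\beta)$ determines $Z(\bar\beta)$ and the conditional law is degenerate), and $X^\prime\beta$ need not have any density for $\beta\neq\bar\beta$ --- the paper explicitly advertises that $X$ itself needs no density, and avoiding precisely your differentiation step is the purpose of its construction. In addition, your parallel piece invokes $\int|vL^\prime(v)|\,dv<\infty$, which is not among the conditions of Assumption \ref{ass_app}-(f) (only $\int|L^\prime(v)|\,dv<\infty$ and $\int v^2|L(v)|\,dv<\infty$ are assumed), and bounding the smoothed regression at the intermediate parameter $\beta(\theta)\neq\bar\beta$ again presupposes that $f_{\beta(\theta)}$ exists and is controlled uniformly over $\mathcal{B}_n$.

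The paper's proof never differentiates in $\beta$ and never conditions on $W$. It bounds $|\Sigma_{2i}(\beta,t)-\Sigma_{2i}(\bar\beta,t)|$ by $\mathbb{E}\left[\mathbb{E}\{|Y(t)|\mid X\}\, g^{-1}\left|L\left((X_i-X)^\prime\beta/g\right)-L\left((X_i-X)^\prime\bar\beta/g\right)\right|\mid X_i\right]$ and then applies the monotonicity argument of Lemma \ref{mon_tric}: writing $L$ as a difference of bounded monotone pieces, on the relevant event the kernel argument at $\beta$ differs from the one at $\bar\beta$ by at most $2b_n$, so each monotone piece of $L((X_i-X)^\prime\beta/g)$ is bracketed, uniformly in $\beta\in\mathcal{B}_n$, by its $\bar\beta$-version with the argument shifted by $\pm 2b_n$; a change of variables then transfers the $\pm 2b_n$ shift onto the map $v\mapsto\mathbb{E}[|Y(t)|\mid X^\prime\bar\beta=v]f_{\bar\beta}(v)$, whose assumed uniform Lipschitz continuity delivers the $O_{\mathbb{P}}(b_n)$ bound. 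All kernels end up evaluated in the fixed direction $\bar\beta$, which is exactly what lets the hypotheses stay minimal. Under the extra conditional-density smoothness you candidly flag, your argument would plausibly go through and is more transparent about where the $b_n$ rate comes from; but as written it does not prove the lemma under its stated conditions, whereas the bounded-variation sandwich does.
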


\begin{proof}[Proof of Lemma \ref{deltas4}]
We can write
\begin{multline*}
\left|\Sigma_{2i} (\beta,t) - \Sigma_{2i} (\bar \beta,t) \right| \leq \mathbb{E}\left[|Y(t)|
  \left|g^{-1} L\left( (X_i-X)^\prime \beta /g\right) - g^{-1} L\left( (X_i-X)^\prime \bar \beta /g\right)\right|\mid X_i\right]\\
  =  \mathbb{E}\left[ \mathbb{E}\left\{|Y(t)| X\right\}
  g^{-1} \left|L\left( (X_i-X)^\prime \beta /g\right) - L\left( (X_i-X)^\prime \bar \beta /g\right)\right|\mid X_i\right].
\end{multline*}
Now, we can apply the monotonicity argument we used in Lemma \ref{mon_tric} and deduce the bound. \end{proof}

\quad

\begin{lem}
\label{prop:I1} Under the conditions of Proposition \ref{sam_1mai}, $I_{1}(\beta_0)=o_{\mathbb{P}}\left(n^{-1}h^{-1/2}\right).$
\end{lem}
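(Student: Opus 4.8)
The plan is to exploit that $I_1(\beta_0)$ is built \emph{only} from the deterministic smoothing bias $(r_i-\tilde r_i)(\cdot;\beta_0)$ of the nonparametric estimate of the conditional distribution function, so that no degeneracy or martingale argument is needed: a crude bound in absolute value, combined with the condition $nh^{1/2}g^4\to 0$, suffices. Writing $b_i(t)=(r_i-\tilde r_i)(t;\beta_0)\hat f_{\beta_0,i}$, one has
$$
I_1(\beta_0)=\frac{1}{n(n-1)h}\sum_{i\neq j}\langle b_i,b_j\rangle_{L^2}\,\hat f_{\beta_0,i}\hat f_{\beta_0,j}K_{ij}\phi_{ij},
$$
and by the definitions of $\tilde r_i$ and $\hat f_{\beta_0,i}$,
$$
b_i(t)=\frac{1}{(n-1)g}\sum_{k\neq i}\bigl[r_i(t;\beta_0)-r_k(t;\beta_0)\bigr]L_{ik}(\beta_0),
$$
which is exactly the quantity treated in Lemma~\ref{deltas3} with $\bar\beta=\beta_0$. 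Hence $b_i(t)=r_i'(t;\beta_0)\,g\,D_{1,ni}+g^2D_{1,ni}(t)$, where I abbreviate $r_i'(t;\beta_0)=r'(X_i'\beta_0;t,\beta_0)$, with $\max_i|D_{1,ni}|=O_{\mathbb{P}}(n^{-1/2}g^{-1/2}\ln^{1/2}n)$ and $\max_i\sup_t|D_{1,ni}(t)|=O_{\mathbb{P}}(1)$.

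Next I would expand $\langle b_i,b_j\rangle_{L^2}$ into four inner products and split $I_1(\beta_0)$ accordingly. The decisive point is that in each resulting sum the $D_{1,ni}$-factors carry uniform (in $i$ and $t$) stochastic bounds that may be factored out, leaving a kernel-weighted average of products of the quantities $\|r_i'(\cdot;\beta_0)\|_{L^2}$. Crucially, these averages must \emph{not} be bounded uniformly in $i,j$ — since $r'$ is only controlled in fourth mean by Assumption~\ref{ass_app}(c) — but kept inside the sum, where a standard change of variables gives, e.g.,
$$
\frac{1}{n(n-1)h}\sum_{i\neq j}\|r_i'\|_{L^2}\|r_j'\|_{L^2}\,|K_{ij}|=O_{\mathbb{P}}(1),
$$
its expectation being bounded by a constant times $\|f_{\beta_0}\|_\infty\,\mathbb{E}[\|r'(\cdot;\beta_0)\|_{L^2}^2]<\infty$; on the relevant events $\hat f_{\beta_0,i}$ is bounded by Lemma~\ref{deltas2} and $\phi_{ij}\leq 1$. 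In this way the product of the two $g^2D_{1,ni}(t)$ terms yields the leading contribution $g^4\,O_{\mathbb{P}}(1)$, whereas the three remaining sums are of strictly smaller order, namely $O_{\mathbb{P}}(n^{-1}g\ln n)$ and $O_{\mathbb{P}}(n^{-1/2}g^{5/2}\ln^{1/2}n)$, all $o(g^4)$ since $g=n^{-\gamma}$ with $\gamma<1/4$ forces $g^3\gg n^{-1}$ and $g^{3/2}\gg n^{-1/2}$. Collecting the four bounds gives $I_1(\beta_0)=O_{\mathbb{P}}(g^4)$, and since Assumption~\ref{ass_app}(g) imposes $nh^{1/2}g^4\to 0$, i.e.\ $g^4=o(n^{-1}h^{-1/2})$, the claim $I_1(\beta_0)=o_{\mathbb{P}}(n^{-1}h^{-1/2})$ follows.

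The main obstacle is conceptual rather than computational: one must recognise that the per-factor bias is genuinely of order $g^2$, not $g$. The naive $O(g)$ first-order term $r_i'(t;\beta_0)gD_{1,ni}$ is in fact small because the symmetry of $L$ makes $D_{1,ni}=O_{\mathbb{P}}(n^{-1/2}g^{-1/2}\ln^{1/2}n)$ rather than $O_{\mathbb{P}}(1)$ — this is the content of Lemma~\ref{deltas3}. The second technical point to watch is that $\|r_i'\|_{L^2}$ is not uniformly bounded, so the kernel averages carrying these factors have to be controlled in mean inside the double sum rather than by a uniform maximum; otherwise, in the regime $\gamma>1/5$, the bound would diverge.
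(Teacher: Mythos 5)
Your proof is correct, but it takes a genuinely different route from the paper's. The paper expands $I_1(\beta_0)$ as a quadruple sum over $(i,j,k,l)$, isolates the configuration where all four indices are distinct, and works with moments: conditioning on $Z_i(\beta_0)$ and using the symmetry of $L$, it gets $g^{-1}\mathbb{E}\left[(r_i-r_k)(t;\beta_0)L_{ik}\mid Z_i(\beta_0)\right]=O(g^2)$ uniformly in $t$, hence $\mathbb{E}[I_{1,1}(\beta_0)]=O(g^4)$; the configurations with at most three distinct indices lose a factor $n$ and contribute $O(n^{-1}g^{-1})=o(n^{-1}h^{-1/2})$ via $h/g^2\to 0$; and the stochastic part is controlled by a second-moment bound $\mathbb{E}[I_1^2(\beta_0)]=o(n^{-2}h^{-1})$, only sketched by reference to Fan--Li-type calculations. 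You instead bound $I_1(\beta_0)$ pathwise: the uniform-in-$i$ representation of Lemma \ref{deltas3} (whose proof is where the symmetry of $L$ enters --- the same cancellation as the paper's conditional-expectation step), Cauchy--Schwarz, and a first-moment bound on the kernel average carrying $\|r_i'\|_{L^2}\|r_j'\|_{L^2}$ (via $ab\leq (a^2+b^2)/2$ and boundedness of $f_{\beta_0}$, which is indeed the right way to exploit $\mathbb{E}\|r'\|_{L^2}^4<\infty$ without a uniform bound on $\|r_i'\|_{L^2}$) give $I_1(\beta_0)=O_{\mathbb{P}}(g^4)$ outright, and $nh^{1/2}g^4\to 0$ finishes. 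Your route buys a proof with no variance computation at all --- precisely the step the paper leaves as ``similar reasoning'' --- and a stronger intermediate conclusion; the paper's moment route buys freedom from uniform-in-$i$ bounds, log factors, and the empirical-process machinery behind Lemma \ref{deltas3}, and it handles the coinciding-index terms explicitly, whereas your pathwise bound absorbs them automatically since it never uses independence across the pairs $(i,j)$. Two minor blemishes, neither fatal: your first display double-counts the density factors --- with $b_i=(r_i-\tilde r_i)(\cdot;\beta_0)\hat f_{\beta_0,i}$ the factors $\hat f_{\beta_0,i}\hat f_{\beta_0,j}$ must not appear again in the sum --- which is harmless since you bound them by $O_{\mathbb{P}}(1)$ anyway; and the rate for $D_{1,ni}$ you quote from Lemma \ref{deltas3} should strictly carry an additional $O(g)$ term (for merely Lipschitz $f_{\beta_0}$ the conditional mean of the first-order term is $O(g)$, not zero), which would make all four pieces of your expansion $O_{\mathbb{P}}(g^4)$ rather than leaving the $g^2 D_{1,ni}(t)$ product as the unique leading term --- but the final bound $I_1(\beta_0)=O_{\mathbb{P}}(g^4)=o_{\mathbb{P}}(n^{-1}h^{-1/2})$ is unchanged.
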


\begin{proof}[Proof of Lemma \ref{prop:I1}]
With the notation defined in equation (\ref{simpli2}) we have
\begin{align*}
I_{1}\left(\beta_{0}\right)=\dfrac{1}{n\left(n-1\right)^{3}g^{2}h}\sum_{i=1}^{n}\sum_{j\neq i}\sum_{k\neq i}\sum_{l\neq j}\left\langle \left(r_{i}-r_{k}\right)\left(\cdot;\,\beta_{0}\right),\,\left(r_{j}-r_{l}\right)\left(\cdot;\,\beta_{0}\right)\right\rangle _{L^{2}} L_{ik}L_{jl}K_{ij}\phi_{ij}
\end{align*}
 and if we denote by $I_{1,1}\left(\beta_{0}\right)$ the term where
$i$, $j$, $k$ and $l$ are all different, then
\begin{align*}
\mathbb{E}\left[I_{1,1}\left(\beta_{0}\right)\right]=\dfrac{\left(n-2\right)\left(n-3\right)}{\left(n-1\right)^{2}g^{2}h}\mathbb{E}\left[\left\langle \mathbb{E}\left[\left(r_{i}-r_{k}\right)\left(\cdot;\,\beta_{0}\right)L_{ik}\mid Z_{i}\left(\beta_{0}\right)\right],\right.\right.\qquad\qquad\qquad\\
\left.\left.\mathbb{E}\left[\left(r_{j}-r_{l}\right)\left(\cdot;\,\beta_{0}\right)L_{jl}\mid Z_{j}\left(\beta_{0}\right)\right]\right\rangle _{L^{2}}K_{ij}\phi_{ij}\right]=O\left(g^{4}\right)
\end{align*}
as soon as $g^{-1}\mathbb{E}\left[\left(r_{i}-r_{k}\right)\left(t;\,\beta_{0}\right)L_{ik}\left(\beta_{0}\right)\mid Z_{i}\left(\beta_{0}\right)\right]=O\left(g^{2}\right)D\left(t;Z_{i}\left(\beta_{0}\right)\right)$
with $D\left(\cdot\right)$ bounded, which is guaranteed by Assumption \ref{ass_app}-(c). When $i$, $j$, $k$ and
$l$ take no more than $3$ different values, the number of terms
is reduced by a factor $n$, and thus we have that $\mathbb{E}\left[I_{1,2}\left(\beta_{0}\right)\right]=O\left(n^{-1}g^{-1}\right)=
o\left(n^{-1}h^{-1/2}\right)$.
Similar reasoning can be applied to prove that $\mathbb{E}\left[I_{1}^{2}\left(\beta_{0}\right)\right]=o\left(n^{-2}h^{-1}\right)$.
See also Proposition A.1. in \cite{FanLi1996}.
\end{proof}

\quad

\begin{lem}
Under the conditions of Proposition \ref{sam_1mai},
\label{prop:I3}$I_{3} (\beta_0) =o_{\mathbb{P}}\left(n^{-1}h^{-1/2}\right)$ .\end{lem}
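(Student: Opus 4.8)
The plan is to write $I_3(\beta_0)$ out explicitly and then bound its first two moments, concluding by Chebyshev's inequality. With the notation of (\ref{simpli2}) and the identity $\tilde\epsilon_i(\cdot)\hat{f}_{\beta_0,i} = \{(n-1)g\}^{-1}\sum_{k\neq i}\epsilon_k(\cdot)L_{ik}$, one has
\[
I_3(\beta_0) = \frac{1}{n(n-1)^3 g^2 h}\sum_{i\neq j}\sum_{k\neq i}\sum_{l\neq j}\langle\epsilon_k(\cdot),\epsilon_l(\cdot)\rangle_{L^2}\,L_{ik}L_{jl}K_{ij}\phi_{ij}.
\]
Since it is enough to show $\mathbb{E}[I_3^2(\beta_0)] = o(n^{-2}h^{-1})$, the proof is a moment computation in the spirit of Lemma \ref{prop:I1} and Proposition A.1 of \cite{FanLi1996}; the one genuinely new feature is the way the $K$-kernel, of bandwidth $h$, couples the two $L$-smoothings, of bandwidth $g$.

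First I would compute $\mathbb{E}[I_3(\beta_0)]$. Conditioning on $X_1,\dots,X_n$ and using that the $\epsilon_k(\cdot)$ are independent with $\mathbb{E}[\epsilon_k(\cdot)\mid X_k]=0$, the term $\langle\epsilon_k,\epsilon_l\rangle_{L^2}$ has vanishing conditional mean unless $k=l$, and the surviving diagonal forces $k\notin\{i,j\}$. A change of variables shows that the index-direction integral $\int\!\int L((Z_i-Z_k)/g)L((Z_j-Z_k)/g)K((Z_i-Z_j)/h)\,dZ_i\,dZ_j$ is of order $gh$, because $h/g\to 0$ makes $K$ act as a near-Dirac factor along the index, while the orthogonal integration against $\phi_{ij}$ and the density factors contribute an $O(1)$ constant. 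Counting the $\sim n^3$ diagonal triples against the prefactor $n^{-1}(n-1)^{-3}g^{-2}h^{-1}$ gives $\mathbb{E}[I_3(\beta_0)] = O((ng)^{-1})$, which is $o(n^{-1}h^{-1/2})$ precisely because Assumption \ref{ass_app}-(g) imposes $h/g^2\to 0$, i.e. $h^{1/2}/g\to 0$.

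The core of the argument is the second moment. Expanding $I_3^2(\beta_0)$ into an eightfold sum and conditioning on the covariates, a term survives only if the four noise indices $k,l,k',l'$ pair up. The dominant pattern is $k=k'$, $l=l'$ with $k\neq l$, for which $\mathbb{E}[\langle\epsilon_k,\epsilon_l\rangle^2\mid X] = \int\!\int\Gamma^2(s,t)\,ds\,dt$; the key point is that each factor $K_{ij}$ and $K_{i'j'}$ forces $|Z_k-Z_l| = O(g)$, since $Z_i\approx Z_k$ and $Z_j\approx Z_l$ only up to $O(g)$ while $h\ll g$. Each of the two inner $(X_i,X_j)$ and $(X_{i'},X_{j'})$ integrals is then of order $gh$, and the subsequent integration over the constrained pair $(X_k,X_l)$ supplies one extra factor $g$ from the index direction, for a per-term order $g^3h^2$. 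With $\sim n^6$ such tuples and the prefactor $n^{-2}(n-1)^{-6}g^{-4}h^{-2}$, this contributes a term of order $(n^2g)^{-1}$, which is $o(n^{-2}h^{-1})$ because $h/g\to 0$. The remaining pairings — the fully diagonal $k=l$, $k'=l'$, yielding $O((ng)^{-2}) = (\mathbb{E}[I_3(\beta_0)])^2$; the crossed pairing $k=l'$, $l=k'$; and the patterns with three or more coincident noise indices — each have strictly fewer free indices or stronger kernel constraints and are $o(n^{-2}h^{-1})$ by the same power counting, using $h/g^2\to 0$ together with $ng^4,nh^2\to\infty$.

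The main obstacle is the bookkeeping in this second-moment expansion: one must enumerate every index-coincidence pattern and, for each, correctly determine how many powers of $g$ and $h$ the coupled kernel integrals generate. The delicate point is exactly the coupling described above, namely that $K_{ij}$ ties the two $L$-localizations together and removes one free index from the $(k,l)$ integration; this is what keeps the leading term at $(n^2g)^{-1}$ rather than something larger. Once this is in place, $\mathbb{E}[I_3^2(\beta_0)] = o(n^{-2}h^{-1})$ follows, and Chebyshev's inequality yields $I_3(\beta_0) = o_{\mathbb{P}}(n^{-1}h^{-1/2})$.
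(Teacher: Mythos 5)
Your proposal is correct and follows essentially the same route as the paper: expand $I_3(\beta_0)$ explicitly, sort the terms by coincidence patterns of the noise indices, bound the first moment by $O(n^{-1}g^{-1})$ (dominant diagonal $k=l\notin\{i,j\}$, with the $K_{ij}$ factor coupling the two $L$-smoothings so that the $(Z_i,Z_j)$ integral is $O(gh)$), use $h/g^{2}\to 0$ to get $o(n^{-1}h^{-1/2})$, and then establish $\mathbb{E}[I_3^2(\beta_0)]=o(n^{-2}h^{-1})$ by the analogous Fan--Li-style power counting before concluding via Chebyshev. In fact your second-moment sketch (dominant pairing $k=k'$, $l=l'$ of per-term order $g^{3}h^{2}$, contributing $(n^{2}g)^{-1}=o(n^{-2}h^{-1})$ since $h/g\to 0$) supplies detail that the paper itself leaves as ``straightforward but tedious calculations.''
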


\begin{proof}[Proof of Lemma \ref{prop:I3}]
Write
\begin{eqnarray*}
I_{3}\left(\beta_{0}\right) & = & \dfrac{1}{n\left(n-1\right)^{3}g^{2}h}\sum_{i=1}^{n}\sum_{j\neq i}\sum_{k\neq i}\sum_{l\neq j}\left\langle \epsilon_{k}\left(\cdot\right),\,\epsilon_{l}\left(\cdot\right)\right\rangle _{L^{2}}L_{ik} L_{jl} K_{ij} \phi_{ij} \\
 & = & \dfrac{1}{n\left(n-1\right)^{3}g^{2}h}\sum_{i=1}^{n}\sum_{j\neq i}\sum_{k\neq i}\sum_{l\neq j,k}\left\langle \epsilon_{k}\left(\cdot\right),\,\epsilon_{l}\left(\cdot\right)\right\rangle _{L^{2}}L_{ik} L_{jl} K_{ij} \phi_{ij} \\
 &  & +\dfrac{1}{n\left(n-1\right)^{3}g^{2}h}\sum_{i=1}^{n}\sum_{j\neq i}\sum_{k\neq i,j}\left\Vert \epsilon_{k}\left(\cdot\right)\right\Vert _{L^{2}}^{2}L_{ik} L_{ji} K_{ij} \phi_{ij} \\
 &  & +\dfrac{1}{n\left(n-1\right)^{3}g^{2}h}\sum_{i=1}^{n}\sum_{j\neq i}\left\Vert \epsilon_{j}\left(\cdot\right)\right\Vert _{L^{2}}^{2}L_{ij} L_{ji} K_{ij} \phi_{ij} \\
 & = & I_{3,1}\left(\beta_{0}\right) +I_{3,2}\left(\beta_{0}\right)+I_{3,3}\left(\beta_{0}\right).
\end{eqnarray*}
Then
\begin{eqnarray*}
\mathbb{E}\left[I_{3,1}\left(\beta_{0}\right)\right] & = & \dfrac{1}{\left(n-1\right)^{2}g^{2}h}\mathbb{E}\left[\left\langle \epsilon_{1}\left(\cdot\right),\,\epsilon_{2}\left(\cdot\right)\right\rangle _{L^{2}}L_{12}^{2} K_{12} \phi_{12} \right]\\
 & = & O\left(n^{-2}g^{-2}\right)\mathbb{E}\left[\left|\left\langle \epsilon_{1}\left(\cdot\right),\,\epsilon_{2}\left(\cdot\right)\right\rangle _{L^{2}}h^{-1}K_{12} \right|\right]\\
 & = & O\left(n^{-2}g^{-2}\right),
\end{eqnarray*}
 $\mathbb{E}\left[I_{3,2}\left(\beta_{0}\right)\right]=O\left(n^{-1}g^{-1}\right)$
and $\mathbb{E}\left[I_{3,3}\left(\beta_{0}\right)\right]=O\left(n^{-2}g^{-2}\right)$, thus $\mathbb{E}\left[I_{3}\left(\beta_{0}\right)\right]=o\left(n^{-1}h^{-1/2}\right).$
By quite straightforward but tedious calculations, it can be  proved that $\mathbb{E}\left[I_{3}^{2}\left(\beta_{0}\right)\right]=o\left(n^{-2}h^{-1}\right)$ and the rate of $I_{3} (\beta_0)$ follows.
\end{proof}

\quad

\begin{lem}
\label{lem:CondVar} Let $A_{n}\left(\beta_{0}\right)$ and $B_{n}\left(\beta_{0}\right)$ be defined as in equation (\ref{an_bn}). Under the conditions of Proposition \ref{sam_1mai}, $A_{n}\left(\beta_{0}\right)\rightarrow 1$ and
$B_{n}\left(\beta_{0}\right)\rightarrow 0$ in probability, and
\[
\forall\varepsilon>0,\quad\sum_{i=2}^{n}\mathbb{E}\left[G_{n,i}^{2}I\left(\left|G_{n,i}\right|>\varepsilon\right)\mid\mathcal{F}_{n,i-1}\right]\rightarrow0 ,\quad\text{in probability}.
\]
\end{lem}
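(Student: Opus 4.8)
The lemma supplies exactly the two hypotheses needed to invoke Corollary 3.1 of \cite{Hall1980} for the martingale array $\{G_{n,i}(\beta_0)\}$: the convergence of the conditional variance $V_n^2(\beta_0)=A_n(\beta_0)+B_n(\beta_0)$ to the constant $1$, and the conditional Lindeberg condition. I would establish the three claims by first- and second-moment computations for $A_n$ and $B_n$, and by a fourth-moment (conditional Lyapunov) bound for the Lindeberg part. Throughout, the estimated densities $\hat f_{\beta_0,i}$ are replaced by the true values $f_{\beta_0}(Z_i(\beta_0))$ at the cost of a negligible remainder, since Lemma \ref{deltas2} controls $\max_i|\hat f_{\beta_0,i}-f_{\beta_0}(Z_i(\beta_0))|$ and every other factor is bounded ($\phi_{ij}\le 1$, $f_{\beta_0}$ bounded, $K$ integrable) by Assumption \ref{ass_app}-(d),(f).

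\textbf{Convergence $A_n(\beta_0)\to 1$.} After the density replacement, $A_n$ is the double sum $\frac{4}{\omega_n^2(n-1)^2h}\sum_{j<i}\big(\int\!\int\Gamma(s,t)\epsilon_j(s)\epsilon_j(t)\,ds\,dt\big)f_{\beta_0}^2(Z_i)f_{\beta_0}^2(Z_j)K_{ij}^2\phi_{ij}^2$. First I would compute its mean: conditioning on the covariates reduces the bracket to $\int\!\int\Gamma(s,t)\,\mathbb{E}[\epsilon_j(s)\epsilon_j(t)\mid X_j]\,ds\,dt$, and then the change of variables that concentrates $Z_i$ around $Z_j$ on the scale $h$ produces the factor $\int K^2$ times the density/conditional-density weight appearing in the limit variance. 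Since $\omega_n^2(\beta_0)\to\omega^2(\beta_0)$ by Lemma \ref{lem:VarLimit}, the combinatorial normalization is designed so that $\mathbb{E}[A_n(\beta_0)]\to 1$. Next I would show $\mathrm{Var}(A_n(\beta_0))\to 0$: the variance splits according to how many indices two generic summands share, and a counting argument—each shared kernel factor contributing one power of $h$ and each free index one power of $n$—bounds the dominant contribution by $O((nh)^{-1})$, which vanishes under Assumption \ref{ass_app}-(g), the moment condition $\mathbb{E}[\|\epsilon\|_{\mathcal H}^2\mid X]<\infty$ from Assumption \ref{ass_app}-(b) ensuring the relevant expectations are finite. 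Hence $A_n(\beta_0)\to 1$ in probability.

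\textbf{Convergence $B_n(\beta_0)\to 0$.} The term $B_n$ collects the off-diagonal products $\epsilon_j(s)\epsilon_k(t)$ with $j\ne k$; since $\mathbb{E}[\epsilon_j(\cdot)]=0$ and the two factors are independent, $\mathbb{E}[B_n(\beta_0)]=0$, so it suffices to bound $\mathbb{E}[B_n^2(\beta_0)]$. Expanding the square yields sums indexed by tuples of the original triples $(i,j,k)$; a contribution survives in expectation only when the $\epsilon$-indices pair up, which forces additional coincidences among the kernel arguments. Counting free indices against the compensating powers of $h$ as before gives $\mathbb{E}[B_n^2(\beta_0)]=o(1)$, so $B_n(\beta_0)\to 0$ in probability and therefore $V_n^2(\beta_0)\to 1$.

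\textbf{Conditional Lindeberg and main obstacle.} For the Lindeberg condition I would use the conditional Lyapunov bound $\sum_{i}\mathbb{E}[G_{n,i}^2\,I(|G_{n,i}|>\varepsilon)\mid\mathcal F_{n,i-1}]\le\varepsilon^{-2}\sum_i\mathbb{E}[G_{n,i}^4\mid\mathcal F_{n,i-1}]$ and verify the unconditional statement $\sum_i\mathbb{E}[G_{n,i}^4]\to 0$, which yields the conditional one in probability by Markov's inequality. Writing $G_{n,i}$ as an inner product of $\epsilon_i\hat f_{\beta_0,i}$ with the partial sum $\sum_{j<i}\epsilon_j\hat f_{\beta_0,j}K_{ij}\phi_{ij}$, the fourth conditional moment factorizes thanks to the conditional independence of the centered $\epsilon_j$, and the leading term is of order $n^{-1}$ up to lower-order corrections, vanishing under Assumption \ref{ass_app}-(g) with the moment condition $a>8$ in Assumption \ref{ass_app}-(b) guaranteeing existence of the required fourth moments. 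The main obstacle is the conditional-variance step $A_n(\beta_0)\to 1$: one must check that the mean of $A_n$ matches the limit of the random normalizer $\omega_n^2(\beta_0)$ \emph{to leading order}, which hinges on correctly evaluating the conditional second moment $\mathbb{E}[\epsilon_j(s)\epsilon_j(t)\mid X_j]$ under the diagonal kernel concentration, and then on the delicate bookkeeping that forces both $\mathrm{Var}(A_n)$ and $\mathbb{E}[B_n^2]$ to vanish despite $h\to 0$; the Lindeberg step is comparatively routine.
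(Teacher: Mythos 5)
Your proposal follows essentially the same route as the paper's proof: both verify the conditions of Corollary 3.1 of \cite{Hall1980} by showing $\mathbb{E}[A_{n}(\beta_{0})]\to 1$ together with $\mathrm{Var}(A_{n}(\beta_{0}))\to 0$, handle $B_{n}(\beta_{0})$ through its zero mean (centered, independent $\epsilon_{j},\epsilon_{k}$, $j\neq k$) and an index-pairing bound on $\mathbb{E}[B_{n}^{2}(\beta_{0})]$, and reduce the conditional Lindeberg condition to $\varepsilon^{-2}\sum_{i}\mathbb{E}[G_{n,i}^{4}\mid\mathcal{F}_{n,i-1}]$, whose expectation is shown to vanish by a fourth-moment computation involving $\mathfrak{G}(s,t,u,v)=\mathbb{E}[\epsilon(s)\epsilon(t)\epsilon(u)\epsilon(v)]$. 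The single spot to tighten is your handling of the random normalizer $\omega_{n}^{-2}(\beta_{0})$ inside $A_{n}$: instead of computing the numerator's mean by a kernel change of variables and appealing to Lemma \ref{lem:VarLimit} plus Slutsky (as stated, ``$\mathbb{E}[A_{n}]\to 1$'' conflates the mean of a ratio with the ratio of means), the paper conditions on $X_{1},\dots,X_{n}$, under which the conditional mean of the numerator is exactly proportional to the very sum defining $\omega_{n}^{2}(\beta_{0})$, so that $\mathbb{E}[A_{n}(\beta_{0})]=n/(n-1)$ identically with no asymptotics needed at that step---your route also works, provided you first factor $A_{n}=(\omega^{2}/\omega_{n}^{2})\tilde{A}_{n}$ with $\tilde{A}_{n}$ deterministically normalized before taking moments.
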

\begin{proof}[Proof of Lemma  \ref{lem:CondVar}]
First, we have
\begin{multline*}
\mathbb{E}\left[A_{n}\left(\beta_{0}\right)\right] =  \mathbb{E}\left[\mathbb{E}\left[A_{n}\left(\beta_{0}\right)\mid X_{1},\dots,X_{n}\right]\right]\\
  =  \mathbb{E}\left[\dfrac{2n}{\omega_{n}^{2}\left(\beta_{0}\right)
  \left(n-1\right)h}\int\int\Gamma\left(s,t\right)\hat{f}_{\beta_{0},i}^{2}
  \mathbb{E}\left[\epsilon_{j}\left(s\right)\epsilon_{j}\left(t\right)\right]
  \hat{f}_{\beta_{0},j}^{2}K_{ij}^{2}\phi_{ij}^{2}ds\, dt\right]\\
 =  \dfrac{n}{n-1}\xrightarrow{n\to\infty}1.
\end{multline*}
Moreover,
\begin{eqnarray*}
\mbox{Var}\left(A_{n}\left(\beta_{0}\right)\right) & \leq & \dfrac{64\left\Vert \phi\right\Vert _{\infty}^{4}}{\left(n-1\right)^{4}h^{2}}\sum_{i=3}^{n}\sum_{j=2}^{i-1}\sum_{j^{\prime}=1}^{j-1}\mathbb{E}\left[\omega_{n}^{-2}\left(\beta_{0}\right)\hat{f}_{\beta_{0},i}^{4}\hat{f}_{\beta_{0},j}^{2}\hat{f}_{\beta_{0},j^{\prime}}^{2}K_{ij}^{2} K_{ij^{\prime}}^{2} \right]\\
 &  & \hphantom{\dfrac{64\left\Vert \phi\right\Vert _{\infty}^{4}}{\left(n-1\right)^{4}h^{2}}\sum_{i=3}^{n}}\times\int\int\int\int\Gamma^2\left(s,t\right)\Gamma^2\left(u,v\right)dsdtdudv\\
 &  & +\dfrac{32\left\Vert \phi\right\Vert _{\infty}^{4}}{\left(n-1\right)^{4}h^{2}}\sum_{i=3}^{n}\sum_{i^{\prime}=2}^{i-1}\sum_{j=1}^{i^{\prime}-1}\mathbb{E}\left[\omega_{n}^{-2}\left(\beta_{0}\right)\hat{f}_{\beta_{0},i}^{2}\hat{f}_{\beta_{0},i^{\prime}}^{2}\hat{f}_{\beta_{0},j}^{4}K_{ij}^{2} K_{i^{\prime}j}^{2} \right]\\
 &  & \hphantom{+\dfrac{32\left\Vert \phi\right\Vert _{\infty}^{4}}{\left(n-1\right)^{4}}\sum_{i=3}^{n}}\times\int\int\int\int\Gamma\left(s,t\right)\Gamma\left(u,v\right)\mathfrak{G}\left(s,t,u,v\right)dsdtdudv\\
 &  & +\dfrac{16\left\Vert \phi\right\Vert _{\infty}^{4}}{\left(n-1\right)^{4}h^{2}}\sum_{i=3}^{n}\sum_{i^{\prime}=2}^{i-1}\sum_{j=1}^{i^{\prime}-1}\mathbb{E}\left[\omega_{n}^{-2}\left(\beta_{0}\right)\hat{f}_{\beta_{0},i}^{4}\hat{f}_{\beta_{0},j}^{4}K_{ij}^{4} \right]\\
 &  & \hphantom{+\dfrac{16\left\Vert \phi\right\Vert _{\infty}^{4}}{\left(n-1\right)^{4}}\sum_{i=3}^{n}}\times\int\int\int\int\Gamma\left(s,t\right)\Gamma\left(u,v\right)\mathfrak{G}\left(s,t,u,v\right)dsdtdudv\\
 & = & o\left(n^{-1}h^{-1/2}\right),
\end{eqnarray*}
where $\mathfrak{G}\left(s,t,u,v\right)=\mathbb{E}\left[\epsilon\left(s\right)\epsilon\left(t\right)\epsilon\left(u\right)\epsilon\left(v\right)\right]$.
The decomposition of  $\mathbb{E}\left[B_{n}^{2}\right]$ involves the same
type of terms and is therefore also of rate $o\left(n^{-1}h^{-1/2}\right)$.
For the Lindeberg condition, we have $\forall\varepsilon>0$, $\forall n\geq1$
and $1<i\leq n$
\[
\mathbb{E}\left[G_{n,i}^{2}I\left(\left|G_{n,i}\right|>\varepsilon\right)\mid\mathcal{F}_{n,i-1}\right]\leq\dfrac{\mathbb{E}\left[G_{n,i}^{4}\mid\mathcal{F}_{n,i-1}\right]}{\varepsilon^{2}}.
\]
Then
\begin{align*}
 &\hspace{-2cm} \sum_{i=2}^{n}\mathbb{E}\left[G_{n,i}^{2}I\left(\left|G_{n,i}\right|>\varepsilon\right)\mid\mathcal{F}_{n,i-1}\right]\\
\leq\; & \dfrac{1}{\varepsilon^{2}}\sum_{i=2}^{n}\mathbb{E}\left[G_{n,i}^{4}\mid\mathcal{F}_{n,i-1}\right]\\
\leq\; & \dfrac{1}{\varepsilon^{2}}\dfrac{16}{\left(n-1\right)^{4}h^{2}}\sum_{i=2}^{n}\int\int\int\int\mathfrak{G}\left(s_{1},s_{2},s_{3},s_{4}\right)\hat{f}_{\beta_{0},i}^{4}\\
 & \hphantom{\dfrac{1}{\varepsilon^{2}}\dfrac{16}{\left(n-1\right)^{4}h^{2}}\sum_{i=2}^{n}\int\int\int\int}\times\prod_{k=1}^{4}\sum_{j_{k}=1}^{i-1}\epsilon_{j_{k}}\left(s_{k}\right)\hat{f}_{\beta_{0},j_{k}}K_{ij_{k}}\phi_{ij_{k}}ds_{k}.
\end{align*}
The expectation of the last majorant is of rate
\begin{align*}
 & \hspace{-2cm} O\left(n^{-1}\right)\int\int\int\int\mathfrak{G}\left(s_{1},s_{2},s_{3},s_{4}\right)\Gamma\left(s_{1},s_{2}\right)\Gamma\left(s_{3},s_{4}\right)ds_{1}ds_{2}ds_{3}ds_{4}\qquad\\
 & \times\mathbb{E}\left[\hat{f}_{\beta_{0},i}^{4}\hat{f}_{\beta_{0},j}^{2}\hat{f}_{\beta_{0},j^{\prime}}^{2}h^{-1}K_{ij}^{2}h^{-1}K_{ij^{\prime}}^{2}\phi_{ij}^{2}\phi_{ij^{\prime}}^{2}\right]\\
 & +O\left(n^{-2}h^{-1}\right)\sum_{i=2}^{n}\int\int\int\int\mathfrak{G}^{2}\left(s_{1},s_{2},s_{3},s_{4}\right)ds_{1}ds_{2}ds_{3}ds_{4}\qquad\\
 & \times\mathbb{E}\left[\hat{f}_{\beta_{0},i}^{4}\hat{f}_{\beta_{0},j}^{4}h^{-1}K_{ij}^{4}\phi_{ij}^{4}\right]\\
= & o\left(n^{-1}h^{-1/2}\right).
\end{align*}
\end{proof}

\quad

\begin{lem}
\label{lem:VarLimit}
Under the conditions of Proposition \ref{sam_1mai}, $\omega_{n}^{2}\left(\beta_{0}\right)\rightarrow\omega^{2}\left(\beta_{0}\right)>0,$ in probability.\end{lem}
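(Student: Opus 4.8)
The plan is to show that $\omega_n^2(\beta_0)$ is, up to a negligible remainder, a second-order $U$-statistic with an $n$-dependent kernel, then to identify the limit of its expectation and to check that its variance vanishes. Since $\iint\Gamma^2(s,t)\,ds\,dt$ is a finite constant, write $\omega_n^2(\beta_0)=\iint\Gamma^2(s,t)\,ds\,dt\times\widetilde{\omega}_n^2(\beta_0)$ with
\[
\widetilde{\omega}_n^2(\beta_0)=\frac{2}{n(n-1)h}\sum_{i\neq j}\hat{f}_{\beta_0,i}^2\,\hat{f}_{\beta_0,j}^2\,K_{ij}^2\,\phi_{ij}^2 .
\]
First I would replace the leave-one-out estimators $\hat{f}_{\beta_0,i}$ by the true values $f_{\beta_0}(X_i^\prime\beta_0)$. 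By Lemma \ref{deltas2} and the bandwidth conditions of Assumption \ref{ass_app}-(g) one has $\max_{1\le i\le n}|\hat{f}_{\beta_0,i}-f_{\beta_0}(X_i^\prime\beta_0)|=o_{\mathbb{P}}(1)$, and since $f_{\beta_0}$ is bounded the estimators are uniformly bounded with probability tending to one. Writing $\hat{f}_i^2\hat{f}_j^2-f_i^2 f_j^2=(\hat{f}_i^2-f_i^2)\hat{f}_j^2+f_i^2(\hat{f}_j^2-f_j^2)$ and using $\{n(n-1)h\}^{-1}\sum_{i\neq j}K_{ij}^2\phi_{ij}^2=O_{\mathbb{P}}(1)$ (its expectation converges to a finite limit by the computation below), the replacement error is $o_{\mathbb{P}}(1)$. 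This reduces the problem to the clean $U$-statistic $U_n=\{n(n-1)h\}^{-1}\,2\sum_{i\neq j}f_{\beta_0}(Z_i)^2 f_{\beta_0}(Z_j)^2 K^2((Z_i-Z_j)/h)\phi^2(W_i-W_j)$, with $Z_i=X_i^\prime\beta_0$ and $W_i=X_i^\prime\mathbf{A}(\beta_0)$.

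Next I would identify $\lim_n\mathbb{E}[U_n]$. Conditioning on $(W_1,W_2)$, the variables $Z_1,Z_2$ are independent with conditional densities $\pi_{\beta_0}(\cdot\mid W_1)$ and $\pi_{\beta_0}(\cdot\mid W_2)$, so
\[
\mathbb{E}[U_n]=2\,\mathbb{E}\left[\phi^2(W_1-W_2)\,\frac1h\iint f_{\beta_0}(z_1)^2 f_{\beta_0}(z_2)^2 K^2\!\left(\tfrac{z_1-z_2}{h}\right)\pi_{\beta_0}(z_1\mid W_1)\pi_{\beta_0}(z_2\mid W_2)\,dz_1\,dz_2\right].
\]
The substitution $z_1=z_2+uh$ turns the inner integral into $\int K^2(u)\,f_{\beta_0}(z_2+uh)^2 f_{\beta_0}(z_2)^2\,\pi_{\beta_0}(z_2+uh\mid W_1)\pi_{\beta_0}(z_2\mid W_2)\,du\,dz_2$, and the continuity of $f_{\beta_0}$ (Assumption \ref{ass_app}-(d)) and of $\pi_{\beta_0}(\cdot\mid w)$ together with dominated convergence give the pointwise limit $\int K^2(u)\,du\int f_{\beta_0}(z)^4\,\pi_{\beta_0}(z\mid W_1)\pi_{\beta_0}(z\mid W_2)\,dz$. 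Taking expectation over $(W_1,W_2)$ and reinstating the factor $\iint\Gamma^2$ yields exactly $\omega^2(\beta_0)$.

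To conclude convergence in probability I would show $\Var(U_n)\to0$ via the variance formula for a second-order $U$-statistic, $\Var(U_n)=O(n^{-1}\zeta_1+n^{-2}\zeta_2)$, where $\zeta_1=\Var(\mathbb{E}[H_n\mid X_1])$ and $\zeta_2=\Var(H_n)$ for $H_n$ the symmetric kernel of $U_n$. As in the expectation computation $\mathbb{E}[H_n\mid X_1]$ converges to a finite limit, whence $\zeta_1=O(1)$; and since $\mathbb{E}[K^4((Z_1-Z_2)/h)\mid X_1]=O(h)$ and $f_{\beta_0},\phi$ are bounded, $\mathbb{E}[H_n^2]=O(h^{-1})$, so $\zeta_2=O(h^{-1})$. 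Thus $\Var(U_n)=O(n^{-1}+n^{-2}h^{-1})=o(1)$, the second term vanishing because $nh^2\to\infty$ forces $n^{-2}h^{-1}=o(n^{-3/2})$. Hence $U_n$, and therefore $\omega_n^2(\beta_0)$, converges to $\omega^2(\beta_0)$ in $L^2$ and a fortiori in probability. Positivity of the limit is then immediate: $\int K^2(u)\,du>0$, $\iint\Gamma^2>0$ (else $\epsilon(t)\equiv0$ almost surely, contradicting the nondegeneracy of the conditional law), and the expectation factor is the mean of a strictly positive integrand since $\phi>0$ everywhere and $f_{\beta_0}$ is a density.

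The main obstacle I anticipate is controlling the replacement of the fourth power of $\hat{f}_{\beta_0,i}$ against the $h^{-1}$ normalization, and verifying that the dominated-convergence step for the inner integral passes uniformly enough in $(W_1,W_2)$ to survive the outer expectation. Both are handled by the boundedness of $f_{\beta_0}$ and $\phi$ and the continuity of $\pi_{\beta_0}(\cdot\mid w)$, but they rely crucially on the bandwidth restrictions of Assumption \ref{ass_app}-(g) to render the density-estimation error negligible against the $O_{\mathbb{P}}(1)$ leading term.
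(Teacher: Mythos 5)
Your proof is correct, and it reaches the paper's limit by a genuinely different route. The paper never discards the leave-one-out estimators: it expands $\hat{f}_{\beta_0,i}^{2}\hat{f}_{\beta_0,j}^{2}$ into a quadruple sum of $L$-kernels, notes that only the configuration with all indices distinct contributes at leading order (reduced-index configurations lose a factor of $n$ and are $o(n^{-1}h^{-1/2})$), and then identifies the limit of the resulting deterministic integral $\tilde{\omega}_{n}^{2}(\beta_0)$ by the same change of variables and continuity argument you use; the stochastic concentration of $\omega_n^2(\beta_0)$ around its mean is dispatched only with the phrase ``standard arguments,'' and the positivity $\omega^2(\beta_0)>0$ asserted in the statement is never addressed. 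You instead eliminate the $\hat{f}$'s at the outset via the uniform rate of Lemma \ref{deltas2} --- legitimate, since $g=n^{-\gamma}$ with $\gamma\in(1/5,1/4)$ makes $n^{-1/2}g^{-1/2}\ln^{1/2}n+g=o(1)$ --- reducing the problem to a clean second-order $U$-statistic with $n$-dependent kernel; your conditioning-plus-substitution computation of the mean then coincides with the paper's limit identification, and your Hoeffding bound $\Var(U_n)=O(n^{-1}\zeta_1+n^{-2}\zeta_2)=O(n^{-1}+n^{-2}h^{-1})=o(1)$ supplies explicitly the variance control the paper leaves tacit (indeed $nh\to\infty$ alone already suffices for the $n^{-2}h^{-1}$ term). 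What your route buys is an honest $L^2$ convergence statement with all stochastic error terms accounted for; what the paper's route buys is avoiding any appeal to uniform consistency of $\hat{f}_{\beta_0,i}$, at the price of bookkeeping six-index moment terms. Two small repairs to your write-up: (i) your bounds on $\zeta_1$ and $\zeta_2$ implicitly use boundedness of $\pi_{\beta_0}(\cdot\mid w)$, which follows from the uniform continuity and integrability the paper itself invokes, but should be said; (ii) in the positivity argument, the integrand $f_{\beta_0}^4(z)\pi_{\beta_0}(z\mid W_1)\pi_{\beta_0}(z\mid W_2)$ is not pointwise strictly positive (the conditional supports need not overlap for every pair $(W_1,W_2)$), but since $W_1,W_2$ are independent, $\mathbb{E}\bigl[\int f_{\beta_0}^4(z)\pi_{\beta_0}(z\mid W_1)\pi_{\beta_0}(z\mid W_2)\,dz\bigr]=\int f_{\beta_0}^6(z)\,dz>0$, so the inner integral is positive with positive probability, and as $\phi^2>0$ everywhere the expectation factor is strictly positive, which completes your argument.
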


\begin{proof}[Proof of Lemma \ref{lem:VarLimit}] We have
\[
\mathbb{E}\left[\omega_{n}^{2}\left(\beta_{0}\right)\right]=2\mathbb{E}\left[\hat{f}_{\beta,i}\hat{f}_{\beta,j}h^{-1}K_{ij}^{2}\left(\beta\right)\phi_{ij}^{2}\left(\beta\right)\right]\times\intop\intop\Gamma^{2}\left(s,t\right)ds\, dt.
\]
On the other hand,
\begin{align*}
 & \mathbb{E}\left[\hat{f}_{\beta,i}\hat{f}_{\beta,j}h^{-1}K_{ij}^{2} \phi_{ij}^{2} \right]\\
= & \dfrac{1}{g^{2}h}\mathbb{E}\left[\sum_{k\neq i}\sum_{l\neq j}\sum_{k^{\prime}\neq i}\sum_{l^{\prime}\neq j}L_{ik}L_{jl}L_{ik^{\prime}}L_{jl^{\prime}}h^{-1}K_{ij}^{2} \phi_{ij}^{2} \right]\\
= & \dfrac{1}{g^{2}h\left(n-1\right)^{2}}\mathbb{E}\left[\sum_{k\neq i}\sum_{l\neq j}\sum_{k^{\prime}\neq i}\sum_{l^{\prime}\neq j}L_{ik}L_{jl}L_{ik^{\prime}}L_{jl^{\prime}}h^{-1}K_{ij}^{2} \phi_{ij}^{2} \right]\\
= & \dfrac{1}{g^{2}h\left(n-1\right)^{4}}\mathbb{E}\left[\sum_{k\neq i}\sum_{l\neq j}\sum_{k^{\prime}\neq i}\sum_{l^{\prime}\neq j}L_{ik}L_{jl}L_{ik^{\prime}}L_{jl^{\prime}}h^{-1}K_{ij}^{2} \phi_{ij}^{2} \right]\\
 & +o\left(n^{-1}h^{-1/2}\right)\\
= & \dfrac{\left(n-1\right)^{3}}{\left(n-2\right)\left(n-3\right)\left(n-4\right)}\tilde{\omega}_{n}^{2}\left(\beta_{0}\right)+o\left(n^{-1}h^{-1/2}\right)
\end{align*}
where
\begin{align*}
\tilde{\omega}_{n}^{2} \left(\beta_{0}\right) =\; & \mathbb{E}\left[\int\int\int\int\dfrac{1}{g}L\left(\dfrac{z_{i}-z_{k}}{g}\right)\dfrac{1}{g}L\left(\dfrac{z_{j}-z_{l}}{g}\right)\dfrac{1}{g}L\left(\dfrac{z_{i}-z_{k^{\prime}}}{g}\right)\dfrac{1}{g}L\left(\dfrac{z_{j}-z_{l^{\prime}}}{g}\right)\right.\\
 & \qquad\qquad\times\dfrac{1}{h}K^{2}\left(\dfrac{z_{i}-z_{j}}{h}\right)\phi_{ij} \\
 & \qquad\qquad\times f_{\beta_0}\left(z_{k}\right)f_{\beta_0}\left(z_{l}\right)f_{\beta_0}\left(z_{k^{\prime}}\right)f_{\beta_0}\left(z_{l^{\prime}}\right)\\
 & \qquad\qquad\left.\times\pi_{\beta_0}\left(z_{i}\mid W_{i}\left(\beta_0\right)\right)\pi_{\beta_0}\left(z_{j}\mid W_{j}\left(\beta_0\right)\right)dz_{i}dz_{j}dz_{k}dz_{l}dz_{k^{\prime}}dz_{l^{\prime}}\vphantom{\dfrac{1}{g}}\right]\\
=\; & \mathbb{E}\left[\int\int\int\int f_{\beta_0}\left(z_{i}-gs_{1}\right)f_{\beta_0}\left(z_{i}-gs_{2}\right)f_{\beta_0}\left(z_{j}-gt_{1}\right)f_{\beta_0}\left(z_{j}-gt_{2}\right)\right.\\
 & \qquad\qquad\times\pi_{\beta_0}\left(z_{i}\mid W_{i}\left(\beta_0\right)\right)\pi_{\beta_0}\left(z_{j}\mid W_{j}\left(\beta_0\right)\right)\phi_{ij}\\
 & \qquad\qquad\left.\times L\left(s_{1}\right)L\left(t_{1}\right)L\left(s_{2}\right)L\left(t_{2}\right)\dfrac{1}{h}K^{2}\left(\dfrac{z_{i}-z_{j}}{h}\right)dz_{i}dz_{j}ds_{1}dt_{1}ds_{2}dt_{2}\vphantom{\dfrac{1}{g}}\right]\\
=\; & \mathbb{E}\left[\int\int\int\int f_{\beta_0}\left(z_{i}-gs_{1}\right)f_{\beta_0}\left(z_{i}-gs_{2}\right)f_{\beta_0}\left(z_{i}-gu-gt_{1}\right)f_{\beta_0}\left(z_{i}-gu-gt_{2}\right)\right.\\
 & \qquad\qquad\times\pi_{\beta_0}\left(z_{i}\mid W_{i}\left(\beta_0\right)\right)\pi_{\beta_0}\left(z_{i}-gu\mid W_{j}\left(\beta_0\right)\right)\phi_{ij}\\
 & \qquad\qquad\left.\times L\left(s_{1}\right)L\left(t_{1}\right)L\left(s_{2}\right)L\left(t_{2}\right)K^{2}\left(u\right)dz_{i}ds_{1}dt_{1}ds_{2}dt_{2}du\vphantom{\dfrac{1}{g}}\right]\\
\to\; & \mathbb{E}\left[\int f_{\beta_0}^{4}\left(z\right)\pi_{\beta_0}\left(z\mid W_{i}\left(\beta_0\right)\right)\pi_{\beta_0}\left(z\mid W_{j}\left(\beta_0\right)\right)\phi_{ij}dz\right]\times\int K^{2}\left(u\right)du
\end{align*}
where the limit is obtained by standard arguments, using uniform continuity
of $f_{\beta_0}\left(\cdot\right)$ and $\pi_{\beta_0}\left(\cdot\mid w\right)$.\end{proof}
%
%\begin{figure}
%\includegraphics[scale=0.6,angle=270]{grapheSIMlaw_Level_p2}
%\includegraphics[scale=0.6,angle=270]{grapheSIMlaw_Level_p4}
%\includegraphics[scale=0.6,angle=270]{grapheSIMlaw_Level_p4_heterolognorm}
%\includegraphics[scale=0.6,angle=270]{grapheSIM_Level_p4_heterolognorm_n200}
%\includegraphics[scale=0.6,angle=270]{LegendSIMmeanLevel}
% \vspace{-3.5cm}
%\caption{Empirical rejections under $H_{0}$ as a function of the bandwidth
%\label{fig:SIMlevel}}
%\end{figure}
%\begin{figure}
%\includegraphics[scale=0.6,angle=270]{grapheSIMpower_p2}
%\includegraphics[scale=0.6,angle=270]{grapheSIMpower_p4}
%\includegraphics[scale=0.6,angle=270]{grapheSIMpower_p4_heterolognorm}
%\includegraphics[scale=0.6,angle=270]{LegendSIMmeanPower}
%% \vspace{-1cm}
%\caption{Power curves for model (\ref{eq:Mod1}), $n=100$\label{fig:SIMpower}}
%\end{figure}
%
%\begin{figure}
%\includegraphics[scale=0.6,angle=270]{grapheSIMlawLeveln200}
%\includegraphics[scale=0.6,angle=270]{grapheSIMlawPowern200}
%% \vspace{-3.5cm}
%\caption{Empirical rejections under $H_{0}$ and $H_{1}$ for conditional law, $n=200$.
%On the left part, $h=c\times n^{-2/9}$ with varying $c$.
%On the right part, $Y = \left(1-\delta\right)\mathcal{N}\left(X^{\prime}\beta,\,0.09\right)+\delta\mathcal{N}\left(\left\Vert X\right\Vert ,\,0.09\right)$.
%\label{fig:SIMlaw}
%}
%\end{figure}

\end{document}